\newcommand{\remove}[1]{} 
\newcommand{\C}{\mathbb{C}}
\newcommand{\F}{\mathbb{F}}
\newcommand{\N}{\mathbb{N}}
\newcommand{\Q}{\mathbb{Q}}
\newcommand{\Z}{\mathbb{Z}}
\newcommand{\calI}{{\mathcal{I}}}
\newcommand{\calJ}{{\mathcal{J}}}
\newcommand{\calL}{{\mathcal{L}}}
\newcommand{\calQ}{{\mathcal{Q}}}
\newcommand{\calR}{{\mathcal{R}}}
\newcommand{\calS}{{\mathcal{S}}}
\newcommand{\catC}{{\mathscr{C}}}
\newcommand{\catD}{{\mathscr{D}}}
\newcommand{\catP}{{\mathscr{P}}}
\newcommand{\fraka}{{\mathfrak{a}}}
\newcommand{\frakp}{{\mathfrak{p}}}
\newcommand{\SMatII}[4]{\left[\begin{array}{cc} {#1} & {#2} \\ {#3} &
{#4} \end{array}\right]}
\newcommand{\smallSMatII}[4]{\left[\begin{smallmatrix} {#1} & {#2} \\ {#3} &
{#4} \end{smallmatrix}\right]}
\newcommand{\suchthat}{\,:\,}
\newcommand{\where}{\,|\,}
\newcommand{\quo}[1]{\overline{#1}}
\newcommand{\veps}{\varepsilon}
\newcommand{\Trings}[1]{\left< #1 \right>}
\DeclareMathOperator{\Cent}{Cent}
\DeclareMathOperator{\Char}{char}
\DeclareMathOperator{\End}{End} %
\DeclareMathOperator{\gen}{gen} %
\DeclareMathOperator{\Hom}{Hom} %
\DeclareMathOperator{\id}{id}
\DeclareMathOperator{\im}{im} %
\DeclareMathOperator{\ind}{ind}
\DeclareMathOperator{\Jac}{Jac} %
\DeclareMathOperator{\Mor}{Mor} %
\newcommand{\op}{\mathrm{op}}
\DeclareMathOperator{\rank}{rank}
\DeclareMathOperator{\Spec}{Spec}
\newtheorem{thm}{Theorem}[section]
\newtheorem*{thm*}{Theorem}
\newtheorem{lem}[thm]{Lemma}
\newtheorem{prp}[thm]{Proposition}
\newtheorem{cor}[thm]{Corollary}
\newtheoremstyle{roman} 
    {8.0pt plus 2.0pt minus 4.0pt}                    
    {8.0pt plus 2.0pt minus 4.0pt}                    
    {\normalfont}                
    {}                           
    {\bfseries}                  
    {.}                          
    {5pt plus 1pt minus 1pt}     
    {}  
\theoremstyle{roman}
\newtheorem{notation}[thm]{Notation}
\newtheorem{example}[thm]{Example}
\newtheorem{remark}[thm]{Remark}
\theoremstyle{plain}
\numberwithin{equation}{section}
\newcommand{\units}[1]{{#1^\times}}   
\newcommand{\Herm}[2][]{\mathrm{UH}^{#1}(#2)}     
\newcommand{\Quad}[2][]{\mathrm{UQ}^{#1}(#2)}     
\newcommand{\scalarExt}[2]{\calR_{{#1}/{#2}}}     
\newcommand{\TDA}[2][]{{\mathrm{A\widetilde{r}}}_{2#1}({#2})}       
\newcommand{\rproj}[1]{\catP(#1)}                   
\newcommand{\Comm}[1]{\mathrm{Comm}\textrm{-}{#1}}  
\newcommand{\dual}[1]{{#1}^\vee}      
\newcommand{\nMat}[2]{\mathrm{M}_{#2}(#1)}    
\newcommand{\rdnorm}[1][]{\mathrm{Nrd}_{#1}}  
\newcommand{\Iso}{\mathrm{Iso}}
\newcommand{\uIso}{{\mathbf{Iso}}}
\newcommand{\uO}{{\mathbf{O}}}
\title{Patching and Weak Approximation in Isometry Groups}
\author{Eva Bayer-Fluckiger$^1$}
\author{Uriya A.\ First$^2$}
\address{$^1$\'{E}cole Polytechnique F\'{e}d\'{e}rale de Lausanne, Switzerland.}
\address{$^2$University of British Columbia, Canada.}
\date{\today}
\thanks{
The second named author has preformed the research in EPFL, the Hebrew University of Jerusalem and the University
of British Columbia (in this order), where he was supported by an SNFS grant \#IZK0Z2\_151061,
an ERC grant \#226135, and the UBC Mathematics Department, respectively.
}
\subjclass[2010]{
11E39, 
11E41, 
16H10. 
}
\keywords{ %
quadratic form,
hermitian form,
algebraic patching,
weak approximation,
genus,
order,
hereditary order,
sesquilinear form,
hermitian category.
}
\begin{document}

\maketitle

\begin{abstract}
    Let $R$ be a semilocal principal ideal domain. Two algebraic objects over $R$ in which
    scalar extension makes sense  (e.g.\ quadratic spaces)
    are said to be of the same \emph{genus} if they become isomorphic
    after extending scalars to all completions of $R$ and its fraction field.
    We prove that the number of isomorphism
    classes in the genus of unimodular quadratic spaces over (not necessarily commutative) \emph{$R$-orders} is always
    a finite power of $2$, and under further assumptions, e.g.\ that the order is hereditary, this number is $1$.
    The same result is also shown for related
    objects, e.g.\ systems of sesquilinear forms.
    A key ingredient in the proof is a weak approximation theorem for groups of isometries,
    which is valid over any (topological) base field, and even over semilocal base rings.
\end{abstract}

\setcounter{section}{-1}

\section{Introduction}
\label{section:intro}

    Let $R$ be a semilocal principal ideal domain, or equivalently, a Dedekind domain with
    finitely many maximal ideals. For $\frakp\in\Spec R$, let $R_\frakp$ denote the localization of
    $R$ at $\frakp$, and let $\hat{R}_{\frakp}$ denote the $\frakp$-adic completion of $R_{\frakp}$.
    Note that $F:=\hat{R}_0$ is just the fraction field of $R$.

    We define the \emph{genus} of a quadratic form $q$ over $R$ to be the set of isomorphism
    classes of
    quadratic forms that become isomorphic to $q$ over $\hat{R}_\frakp$ for all $\frakp\in \Spec R$
    (including $\frakp=0$). This resembles the (much stronger) notion of genus of quadratic forms over the integers \cite[\S102A]{OMeara63edit2000}.
    A classical result  states that the genus of integral quadratic forms is finite (see \cite[Th.~102:8, Th.~103:4]{OMeara63edit2000}
    and also \cite[Th.~3.4, Th.~4.2]{BayKeaWil89} for generalizations).

    Our notion of genus clearly generalizes to other objects defined over $R$ for which there is a  notion
    of scalar extension. This paper is concerned with proving that the genus is finite for various types of objects
    of quadratic nature.
    Of particular interest are cases where the genus consists of a
    single isomorphism class, since then it is enough to check isomorphism over the completions
    $\{\hat{R}_{\frakp}\}_{\frakp\in \Spec R}$ in order to prove isomorphism over $R$. This can be regarded
    as algebraic
    patching, of the kind
    that requires certain factorizations in the automorphism group.
    Such patching problems were considered by various authors, especially
    for torsors of group schemes; see for instance \cite{Nisne84}
    and \cite[\S3]{HarHarKra09}, 
    to name just a few examples.

\medskip

    Let $A$ be an $R$-algebra admitting a \emph{unitary $R$-algebra} structure (all definitions
    are recalled in section~\ref{section:preliminaries}). Then one can consider the genus of
    quadratic spaces over $A$.
    Assume henceforth that  $A$ is finitely generated
    and torsion-free as an $R$-module,
    and
    let $(P,[f])$ be a \emph{unimodular} quadratic space over $A$. We show that:
    \begin{enumerate}
        \item[(1)] $|\gen(P,[f])|$ is a finite power of $2$.
        \item[(2)] If $A$ is a hereditary,
        then $|\gen(P,[f])|=1$.
    \end{enumerate}
    Recall that the algebra $A$ is hereditary if its one-sided ideals are projective. Notable examples of hereditary
    orders include \emph{maximal orders}.
    We also bound  the size of the genus in the non-hereditary case (see Theorem~\ref{TH:main}).

    When $2\in\units{R}$, we extend the previous results to systems of sesquilinear forms
    and \emph{non-unimodular} hermitian forms, using  results from \cite{BayerFain96},
    \cite{BayerMold12} and \cite{BayFiMol13}. Specifically, we show that:
    \begin{enumerate}
        \item[(3)] Let $\{\sigma_i\}_{i\in I}$ be a family of $R$-involutions on $A$
        and let $(P,\{f_i\}_{i\in I})$ be a system of sesquilinear forms
        over $(A,\{\sigma_i\}_{i\in I})$. Then $|\gen(P,\{f_i\})|$ is a finite power of $2$.
        \item[(4)]
        Let $\sigma:A\to A$ be an $R$-involution and let $u\in\Cent(A)$  be an element
        satisfying $u^\sigma u=1$. Assume that $A$ is hereditary.
        Then $|\gen(P,f)|=1$ for every $u$-hermitian space $(P,f)$ over $(A,\sigma)$ (unimodularity is not assumed).
    \end{enumerate}
    As an application of (4), we show that Witt's Cancellation Theorem and a variant of Springer's
    Theorem hold for hermitian forms over involutary hereditary orders (unimodularity is not assumed).

\medskip

    A main tool in the proofs, which may be of interest in its own right,
    is a weak approximation theorem: Let $K$ be a topological commutative  \emph{semilocal} ring,
    let $F$ be a dense subring,  let $A$ be a \emph{unitary $F$-algebra}, and let $(P,[f])$
    be a unimodular quadratic space over $A$. Denote by $O([f])$ the group of isometries
    of $[f]$, and let $[f_K]$ denote
    the scalar extension of $[f]$ to $A\otimes_F K$.
    Then, under mild assumptions, the closure of $O([f])$ in $O([f_K])$ is of finite
    index. Furthermore,
    if $F$ is a field, $K$ is a product of fields, and and $O^+$ denotes
    the connected component of $O([f])$ (when viewed as a group scheme),
    then $\overline{O^+(F)}=O^+(K)$.

    Weak approximation theorems for adjoint algebraic groups over
    arbitrary topological fields were studied previously; see \cite{Thang96}
    and references therein (for instance). Our approach is somewhat different and relies
    on generalizations of Witt's Theorem to quadratic spaces over semilocal rings (\cite{Reiter75}, \cite{Fi14A}).

    We  note that in our general setting $O([f])$ can indeed be regarded as an affine scheme over $\Spec F$, which is also
    smooth and faithfully flat. However,
    when $F$ is replaced with a commutative ring,
    some mild assumptions are necessary.
    For the sake of completeness, this is
    discussed in an appendix. The appendix  is based on  communications with
    Mathieu Huruguen, and we thank him for his contribution.

\medskip

    The paper is organized as follows: Section~\ref{section:preliminaries} recalls
    unitary rings and quadratic spaces. Section~\ref{section:topology} deals with some topological
    issues required to phrase the weak approximation theorem, which is proved in section~\ref{section:density}.
    In sections~\ref{section:double-cosets} and~\ref{section:patching}, we prove the finiteness of the genus,
    and in section~\ref{section:size-one} we characterize various cases in which the size of the genus is $1$.
    Finally, section~\ref{section:further-quadratic-objects} extends the previous results
    to systems of sesquilinear forms and non-unimodular hermitian forms. The cancellation and the variant
    of Springer's Theorem mentioned above are also proved there.
    In the appendix, we show that  isometry groups can be regarded
    as smooth affine group schemes, provided mild assumptions.

\section{Preliminaries}
\label{section:preliminaries}

    This section recalls  hermitian and quadratic forms over rings and various related notions.
    See  \cite{Bass73AlgebraicKThyIII} and
    \cite{Kn91}
    for an extensive discussion.

\subsection{Hermitian Forms}
\label{subsection:herm-forms}

    Let $(A,\sigma)$ be a ring with involution and let $u\in \Cent(A)$ be an element satisfying $u^\sigma u=1$.
    Denote by $\rproj{A}$ the category of finitely generated projective right $A$-modules.
    For  $P\in\rproj{A}$, we make $P^*:=\Hom_A(P,A)$ into a right $A$-module
    by setting
    \[(\psi a)x=a^\sigma(\psi x)\qquad \forall\,\psi\in P^*,\,a\in A,\,x\in P\ .\]
    Observe that $*:\rproj{A}\to\rproj{A}$ is a contravariant functor. For every morphism
    $f\in\Hom_A(P,Q)$, the dual $f^*\in\Hom_A(Q^*,P^*)$ is given by $f^*\psi=\psi \circ f$
    ($\psi\in P^*$).

    Every morphism $f\in\Hom_A(P,P^*)$ gives rise to a map $\tilde{f}:P\times P\to A$
    given by
    \[\tilde{f}(x,y)=(fx)y\qquad\forall x,y\in P\ .\]
    The map $\tilde{f}$ is biadditive and satisfies
    \begin{equation}\label{EQ:sesquilinear-dfn}
    \tilde{f}(xa,yb)=a^\sigma \tilde{f}(x,y)b\qquad\forall\, x,y\in P,\, a,b\in A\ .
    \end{equation}
    Conversely, it is easy to see that any biadditive map $\tilde{f}:P\times P\to A$ satisfying \eqref{EQ:sesquilinear-dfn}
    is induced by a unique homomorphism $f\in\Hom_A(P,P^*)$. The map $\tilde{f}$ is called a \emph{sesquilinear
    form} and the pair $(P,f)$ or $(P,\tilde{f})$ is called a \emph{sesquilinear space} (over $(A,\sigma)$).
    We say that $(P,f)$ is \emph{unimodular} if $f$ is an isomorphism.

    There is a natural homomorphism $\omega_P:P\to P^{**}$ given by
    \[
    (\omega_P x)\phi=(\phi x)^\sigma u\qquad\forall\, x\in P,\,\phi\in P^*\ .
    \]
    It is well-known that $\omega_P$ is an isomorphism (when $P\in\rproj{A}$). Notice
    that $\omega_P$ depends on $u$.
    A \emph{$u$-hermitian space} over $(A,\sigma)$ is a sesquilinear space $(P,f)$ such that $f=f^*\omega_P$.
    This is equivalent to
    \[
    \tilde{f}(x,y)=\tilde{f}(y,x)^\sigma u\qquad\forall\, x,y\in P\ .
    \]

    Let $(P,f)$ and $(P',f')$ be sesquilinear spaces. An isometry
    from $(P,f)$ to $(P',f')$ is an isomorphism $\phi:P\to P'$ such that
    $f=\phi^*f'\phi$. The latter is equivalent to
    \[
    \tilde{f}(x,y)=\tilde{f}'(\phi x,\phi y)\qquad\forall\, x,y\in P\ .
    \]
    The group of isometries of $(P,f)$ is denoted by $O(f)$.

    Orthogonal sums of sesquilinear forms are defined in the usual way.
    We denote by $\Herm[u]{A,\sigma}$
    the category of unimodular $u$-hermitian forms
    over $(A,\sigma)$.

\subsection{Quadratic Forms}
\label{subsection:quadratic-forms}

    Keep the setting of \ref{subsection:herm-forms}.
    To define quadratic spaces, additional data is needed.
    Set
    \[
    \Lambda^{\min}(u)=\{a-a^\sigma u\where a\in A\} \qquad\text{and}\qquad \Lambda^{\max}(u)= \{a\in A\suchthat a^\sigma u=-a\}
    \]
    A \emph{form parameter}
    (for $(A,\sigma,u)$) consists of an additive group $\Lambda$ such that
    \[\Lambda^{\min}(u)\subseteq\Lambda\subseteq\Lambda^{\max}(u)\qquad \text{and}\qquad
    a^\sigma\Lambda a\subseteq\Lambda\qquad\forall\, a\in A\ .
    \]
    In this case, the quartet $(A,\sigma,u,\Lambda)$ is called a \emph{unitary ring}. (It is also
    common to call the pair $(u,\Lambda)$ a form parameter.)
    When $2\in \units{A}$, $\Lambda^{\min}(u)=\Lambda^{\max}(u)$ because
    any $a\in \Lambda^{\max}(u)$ satisfies $a=\frac{1}{2}a-(\frac{1}{2}a)^\sigma u\in\Lambda^{\min}(u)$,
    so there is only one possible form parameter.

    For every $P\in\rproj{A}$,
    define
    \[
    \Lambda_P=\{f\in\Hom_A(P,P^*)\suchthat f=-f^*\omega_P ~\text{and}~\tilde{f}(x,x)\in\Lambda~\text{for all}~x\in P\}\ .
    \]
    Given $f\in \Hom_A(P,P^*)$, denote by $[f]$ the class of $f$ in $\Hom_A(P,P^*)/\Lambda_P$.
    A \emph{quadratic space} (over $(A,\sigma,u,\Lambda)$) is a pair $(P,[f])$ with $P\in\rproj{A}$
    and $f\in\Hom_A(P,P^*)$. Associated with $[f]$ are the $u$-hermitian form
    \[h_f=f+f^*\omega_P\]
    and the
    quadratic map $\hat{f}:P\to A/\Lambda$
    given by
    \[\hat{f}(x)=\tilde{f}(x,x)+\Lambda\ .\]
    Both $h_f$ and $\hat{f}$ are determined by the class $[f]$ (rather than $f$), and conversely, they also determine $[f]$.
    We say that $(P,[f])$ is \emph{unimodular} if $h_f:P\to P^*$ is bijective.

    Let $(P',[f'])$ be another quadratic space. An isometry
    from $(P,[f])$ to $(P',[f'])$ is an isomorphism $\phi:P\to P'$
    such that
    \begin{equation}\label{EQ:isometry-for-quad-forms}
    [\phi^* f'\phi]=[f]
    \end{equation}
    in $\Hom_A(P,P^*)/\Lambda_P$.
    This is equivalent to
    \begin{equation}\label{EQ:isometry-equiv-cond}
    h_{f'}(\phi x,\phi y)=h_f(x,y)\qquad\text{and}\qquad \hat{f}'(\phi x)=\hat{f}(x)\qquad\forall x,y\in P\ .
    \end{equation}
    We let $O([f])$  denote the  isometry group of $(P,[f])$.

    The category of unimodular  quadratic spaces over $(A,\sigma,u,\Lambda)$
    is denoted by $\Quad[u,\Lambda]{A,\sigma}$.

    \begin{remark}\label{RM:two-is-invertible}
        When $2\in \units{A}$, $[f]$
        can be recovered from $h:=h_f$ via $[f]=[\frac{1}{2}h]$.
        Equation \eqref{EQ:isometry-for-quad-forms}  is therefore equivalent to $\phi^*h'\phi-h\in\Lambda_P$
        where $h'=h_{f'}$. Write $g=\phi^*h'\phi-h$. Then $g=g^*\omega $ (since $h=h^*\omega$ and $h'=h'^*\omega$) and
        $g=-g^*\omega$ (since $g\in\Lambda_P$), hence $g=0$ (because $2\in\units{A}$). Therefore,  $\phi$
        is an isometry from $(P,[f])$ to $(P',[f'])$ if and only if it is an isometry
        from $(P,h)$ to $(P',h')$. It follows that $\Quad[u,\Lambda]{A,\sigma}\cong\Herm[u]{A,\sigma}$
        when $2\in \units{A}$.
    \end{remark}

\subsection{Scalar Extension}
\label{subsection:unitary-algs}

    Let $F$ be a commutative ring. Throughout, all tensor products are taken over $F$.
    A \emph{unitary $F$-algebra} is a unitary
    ring $(A,\sigma,u,\Lambda)$ such that $A$ is an $F$-algebra, $\sigma$ is $F$-linear,
    and $\Lambda$ is an $F$-submodule of $A$. For a commutative ring extension $K/F$, define
    \[
    \scalarExt{K}{F}(A,\sigma,u,\Lambda)=(A\otimes K,\sigma\otimes\id_K,u\otimes1_K,\Lambda\otimes_F^A K)\ .
    \]
    Here,  $\Lambda\otimes_F^A K$  denotes the image of $\Lambda\otimes K$ in $A\otimes K$
    (when $K$ is flat over $F$, the distinction between $\Lambda\otimes K$
    and $\Lambda\otimes_F^A K$ is unnecessary).
    It is easy to see that $(B,\tau,v,\Gamma):=\scalarExt{K}{F}(A,\sigma,u,\Lambda)$ is a unitary ring.

    For every $P,Q\in \rproj{A}$ and $\phi\in\Hom_A(P,Q)$, let
    \[
    P_K=P\otimes K\qquad\text{and}\qquad \phi_K=\phi\otimes \id_K\in\Hom_{A_K}(P_K,Q_K)\ .
    \]
    (Here, $P_K$ is considered as an $A_K$-module by setting $(x\otimes k)(a\otimes m)=(xa)\otimes (km)$
    for all $x\in P$, $a\in A$, $k,m\in K$.) The assignment $P\mapsto P_K:\rproj{A}\to\rproj{A_K}$
    is a functor denoted by $\scalarExt{K}{F}$.

    For every sesquilinear form $f$ on $P$, define a sesquilinear form $f_K$ on $P_K$ by
    linearly extending
    \[
    \tilde{f}_K(x\otimes k,x'\otimes k')=\tilde{f}(x,x')\otimes kk'\qquad\forall\, x,x'\in P,\, k,k'\in K\ .
    \]
    It is  easy to check that  $f\in \Lambda_P$ implies $f_K\in \Gamma_{P_K}$, so
    the map sending $(P,[f])$ to $(P_K,[f_K])$ is well-defined.
    The quadratic space (resp.\ sesquilinear space)  $(P_K,[f_K])$ (resp.\ $(P_K,f_K)$)
    is called the \emph{scalar extension} of $(P,[f])$ (resp.\  $(P,f)$).
    This gives rise
    to functors
    \begin{align*}
    \Herm[u]{A,\sigma}&\to\Herm[v]{B,\tau},\\
    \Quad[u,\Lambda]{A,\sigma}&\to \Quad[v,\Gamma]{B,\tau},
    \end{align*}
    which, by abuse of notation, are all denoted  $\scalarExt{K}{F}$.
    (The action of $\scalarExt{K}{F}$ on isometries is the same as its action on morphisms of $\rproj{A}$.)

\medskip

    We now give another description of $f_K$ and $[f_K]$, which does not pass through $\tilde{f}$
    and will be useful later.

    \begin{lem}\label{LM:natural-isomorphism}
        For all $P,P'\in \rproj{A}$, there are \emph{natural} isomorphisms
        \[
        \Hom_A(P,P')_K\cong \Hom_{A_K}(P_K,P'_K)
        \qquad\text{and}\qquad
        (P^*)_K\cong (P_K)^*\ ,
        \]
        where the latter is an isomorphism of $A_K$-modules.
    \end{lem}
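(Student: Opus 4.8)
The plan is to prove both natural isomorphisms by reducing to the case of free modules and then invoking naturality to transport the result to arbitrary finitely generated projective modules. Concretely, for the first isomorphism I would construct an explicit natural transformation $\eta_{P,P'}\colon \Hom_A(P,P')\otimes K\to \Hom_{A_K}(P_K,P'_K)$ by sending $\phi\otimes k$ to the $A_K$-linear map $x\otimes m\mapsto (\phi x)\otimes km$; bilinearity over $F$ and well-definedness are routine, and naturality in both $P$ and $P'$ is immediate from the formula. The second map $\theta_P\colon (P^*)\otimes K\to (P_K)^*$ is the special case $P'=A$ combined with the identifications $A^*\cong A$ and $(A_K)^*\cong A_K$ (coming from $\sigma$ and $\tau$); I would just check that $\theta_P$ is $A_K$-linear with respect to the twisted module structure on duals, using that $\tau=\sigma\otimes\id_K$ acts compatibly.

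Next I would show $\eta_{P,P'}$ is an isomorphism. For $P=A^n$ and $P'=A^m$ free, $\Hom_A(P,P')\cong \nMat{A}{m\times n}$ (matrices over $A$), so $\Hom_A(P,P')\otimes K\cong \nMat{A}{m\times n}\otimes K\cong \nMat{A\otimes K}{m\times n}\cong \Hom_{A_K}(P_K,P'_K)$, and one checks this composite chain of canonical isomorphisms agrees with $\eta_{A^n,A^m}$. For general $P,P'\in\rproj{A}$, choose $Q,Q'$ with $P\oplus Q\cong A^n$ and $P'\oplus Q'\cong A^m$. Both sides of $\eta$ are additive functors in each variable (tensoring with $K$ is additive, $\Hom$ is additive, and $(-)_K$ is additive), so $\eta_{P\oplus Q,\,P'\oplus Q'}$ is the direct sum of the four components $\eta_{P,P'},\eta_{P,Q'},\eta_{Q,P'},\eta_{Q,Q'}$; since the direct sum is an isomorphism (being $\eta$ on free modules) and a direct sum of maps is an isomorphism iff each summand is, each $\eta_{P,P'}$ is an isomorphism. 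The dual statement $\theta_P$ then follows as the instance $P'=A$, or equivalently by the same free-plus-summand argument applied directly to $*$.

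Finally I would remark that naturality is exactly what makes this useful: the diagram expressing that $(-)^*$ intertwines $\eta$ with the duality functors commutes, so under these identifications the scalar-extended form $f_K\in\Hom_{A_K}(P_K,(P_K)^*)$ corresponds to $f\otimes 1\in \Hom_A(P,P^*)\otimes K$, and likewise $(f^*)_K$ corresponds to $f^*\otimes 1$; combined with the fact that $\omega_{P_K}$ corresponds to $\omega_P\otimes 1$ (checked on free modules and extended by naturality), this will later let us verify statements like $h_{f_K}=(h_f)_K$ and $\Lambda$-membership without touching $\tilde{f}$.

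The main obstacle I expect is bookkeeping rather than conceptual difficulty: one must be careful that the various canonical identifications ($A^*\cong A$ via $a\mapsto(x\mapsto a^\sigma x)$ or its variant, the matrix description of $\Hom$, the passage $\nMat{A}{}\otimes K\cong\nMat{A_K}{}$, and the twisted right-module structures on duals) are all mutually compatible and that $\eta$ really is the composite of them on free modules — a sign error in how $\sigma$ interacts with the right-action on $P^*$ would propagate. Checking $A_K$-linearity of $\eta_{P,P'}(\phi\otimes k)$ and of $\theta_P$ with respect to the correct twisted actions is the one place where the involution genuinely enters, so I would do that computation carefully; everything else is formal additivity and reduction to the free case.
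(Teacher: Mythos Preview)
Your proposal is correct and follows essentially the same route as the paper: define the explicit natural map $\phi\otimes k\mapsto (x\otimes m\mapsto \phi x\otimes km)$, verify it is an isomorphism in the free case, and then use additivity/naturality to pass to summands; the second isomorphism is reduced to the first with $P'=A$. The only cosmetic difference is that the paper checks the base case $P=P'=A$ and lets naturality handle all free modules, whereas you go directly to $A^n$, $A^m$ via matrices---these are interchangeable.
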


    \begin{proof}
        Since $P^*=\Hom_A(P,A)$ and $(P_K)^*=\Hom_{A_K}(P_K,A_K)$, the second isomorphism
        is just a special case of the first isomorphism (and it is straightforward to check
        that we in fact obtain  an isomorphism of $A_K$-modules).
        Define $\Phi:\Hom_A(P,P')_K\to \Hom_{A_K}(P_K,P'_K)$
        by
        \[
        (\Phi(\phi\otimes k))(x\otimes m)=\phi x\otimes km\qquad\forall\, \phi\in\Hom_A(P,P')\, ,
        x\in P,\, k,m\in K\ .
        \]
        It is routine to verify that $\Phi$ is natural (in the categorical sense) and   an isomorphism
        when $P=P'=A_A$. The naturality of $\Phi$ now implies that it is also an isomorphism
        when $P$ and $P'$ are summands of f.g.~free modules.
    \end{proof}

    \begin{prp}\label{PR:scalar-ext-for-quad-forms}
        Every  $P\in\rproj{A}$ gives rise to  a commutative diagram with exact rows
        and such that $\alpha$ is onto and $\beta$ and $\gamma$ are isomorphisms.
        \[
        \xymatrix{
        & (\Lambda_{P})_K\ar[r] \ar[d]^\alpha & \Hom_A(P,P^*)_K \ar[r] \ar[d]^\beta & (\Hom_A(P,P^*)/\Lambda_P)_K \ar[r] \ar[d]^\gamma & 0\\
        0 \ar[r] & \Gamma_{P_K} \ar[r] & \Hom_{A_K}(P_K,(P_K)^*) \ar[r] & \Hom_{A_K}
        (P_K,(P_K)^*)/\Gamma_{P_K}  \ar[r] & 0
        }
        \]
    \end{prp}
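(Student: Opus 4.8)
The plan is to build the diagram one map at a time, using the functor $\scalarExt{K}{F}$ together with Lemma~\ref{LM:natural-isomorphism}, and then check exactness by a diagram chase. First, I would define $\beta\colon\Hom_A(P,P^*)_K\to\Hom_{A_K}(P_K,(P_K)^*)$ to be the composite of the natural isomorphism $\Hom_A(P,P^*)_K\cong\Hom_{A_K}(P_K,(P_K)^*{}')$ of Lemma~\ref{LM:natural-isomorphism} with the $A_K$-module isomorphism $(P^*)_K\cong(P_K)^*$ applied on the target; so $\beta$ is an isomorphism by construction. Explicitly one has $(\beta(\phi\otimes k))(x\otimes m)=(\phi x\otimes km)$ under the identification of $(P^*)_K$ with $(P_K)^*$, and it should be checked that this identification carries $\omega_P\otimes\id_K$ to $\omega_{P_K}$ and $f^*\otimes\id_K$ to $(\beta f)^*$ — a routine unwinding of the definitions of $*$ and $\omega$ that I would state but not belabour.

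Next I would define $\gamma$ to be the unique map making the right-hand square commute. The point is that applying the exact functor $-\otimes K$ (when $K$ is flat over $F$; otherwise one works directly with the image $\Lambda\otimes_F^A K$, which still gives a right-exact sequence) to the defining short exact sequence $0\to\Lambda_P\to\Hom_A(P,P^*)\to\Hom_A(P,P^*)/\Lambda_P\to0$ produces the top row with the stated exactness: $-\otimes K$ is right exact, so the top row is exact at $\Hom_A(P,P^*)_K$ and at the quotient, and the first map need only have dense, i.e.\ surjective, image onto $(\Lambda_P)_K$ — indeed it is the canonical surjection $\Lambda_P\otimes K\twoheadrightarrow(\Lambda_P)_K=\Lambda_P\otimes_F^A K$. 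That explains why only $\alpha$ is asserted to be onto and not injective. Define $\alpha$ as the restriction of $\beta$, once we know $\beta$ carries $(\Lambda_P)_K$ into $\Gamma_{P_K}$; this is precisely the compatibility $f\in\Lambda_P\Rightarrow f_K\in\Gamma_{P_K}$ already noted before Lemma~\ref{LM:natural-isomorphism}, rephrased via $\beta$, using that $\beta$ transports the two conditions $f=-f^*\omega_P$ and $\tilde f(x,x)\in\Lambda$ to $f_K=-f_K^*\omega_{P_K}$ and $\tilde f_K(y,y)\in\Gamma$ respectively (the latter because $\Lambda\otimes_F^A K=\Gamma$ by the definition of $\scalarExt{K}{F}$, and $\tilde f_K(x\otimes k,x\otimes k)=\tilde f(x,x)\otimes k^2$, with general vectors handled by the polarization identity $\tilde f(x+y,x+y)-\tilde f(x,x)-\tilde f(y,y)\in\Lambda^{\min}\subseteq\Lambda$). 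Commutativity of the left square is then automatic since $\alpha$ is a restriction of $\beta$.

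It remains to prove that $\gamma$ is an isomorphism and that the bottom row is exact (exactness of the top row being free from right-exactness of $-\otimes K$ as above). Exactness of the bottom row at $\Hom_{A_K}(P_K,(P_K)^*)$ is just the definition of the quotient, and the map to the quotient is surjective, so the bottom row is exact. For $\gamma$: surjectivity follows from surjectivity of $\beta$ and of the right-hand vertical-then-horizontal composite by a standard diagram chase; injectivity is the only place where something could go wrong, and it amounts to showing $\beta^{-1}(\Gamma_{P_K})=(\Lambda_P)_K$, i.e.\ that $\alpha$ is onto $\Gamma_{P_K}$. I expect this surjectivity of $\alpha$ to be the main obstacle: one must show every element of $\Gamma_{P_K}$ — an alternating-type form over $A_K$ whose diagonal lies in $\Gamma=\Lambda\otimes_F^A K$ — comes from $\Lambda_P\otimes K$. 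The condition $g=-g^*\omega_{P_K}$ is $F$-linear in $g$ and behaves well under $-\otimes K$, so the subtlety is entirely the diagonal condition; here I would reduce to $P$ free, pick a basis, and observe that an element of $\Gamma_{P_K}$ is determined by its strictly-upper-triangular entries in $\Hom_{A_K}(P_K,(P_K)^*)$ (arbitrary) together with a diagonal in $\Gamma$, exactly as $\Lambda_P$ is determined by strictly-upper entries in $\Hom_A(P,P^*)$ and a diagonal in $\Lambda$; then surjectivity of $\alpha$ follows from surjectivity of $\Hom_A(P,P^*)\otimes K\to\Hom_{A_K}(P_K,(P_K)^*)$ (from $\beta$) and of $\Lambda\otimes_F^A K=\Gamma$ (by definition). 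The general projective $P$ then follows by the usual trick of choosing a complement $P\oplus Q$ free and using naturality of the whole diagram in $P$.
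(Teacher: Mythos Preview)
Your proposal is correct and follows essentially the same approach as the paper's own proof: define $\beta$ via Lemma~\ref{LM:natural-isomorphism}, take $\alpha$ to be its restriction to $(\Lambda_P)_K$, reduce the claim that $\gamma$ is an isomorphism to the surjectivity of $\alpha$, and prove the latter first for free $P$ by a basis/``upper-triangular plus diagonal in $\Lambda$'' decomposition and then for general $P$ by passing to a free complement. The paper phrases the reduction via the five lemma rather than an explicit chase, and writes out the free case with explicit elementary forms $g_{ij}^{(s)}$, but the content is the same.
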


    \begin{proof}
        The exactness of the bottom row holds by definition, and the exactness of the top row
        follows from the exact sequence
        \[0\to \Lambda_P\to\Hom_A(P,P^*)\to \Hom(P,P^*)/\Lambda_P\to 0\]
        by tensoring with $K$.
        By Lemma~\ref{LM:natural-isomorphism}, we have a natural isomorphism
        \[
        \Hom_A(P,P^*)_K\cong \Hom_{A_K}(P_K,(P^*)_K)\cong \Hom_{A_K}(P_K,(P_K)^*)
        \]
        which we take to be $\beta$. Explicitly, for all $f\in\Hom_A(P,P^*)$ and $k\in K$, one has
        \[
        (\beta (f\otimes k))\!\tilde{\phantom{f}}(x\otimes a, y\otimes b)=\tilde{f}(x,y)\otimes kab\qquad\forall\, x,y\in P,\, a,b\in K\ .
        \]
        This is easily seen to imply that the image of $(\Lambda_P)_K$ in $\Hom_A(P,P^*)_K$ is mapped
        by $\beta$ into $\Gamma_{P_K}$. We define $\alpha:(\Lambda_P)_K\to \Gamma_{P_K}$ to be this map.
        The map $\gamma$ is now induced by $\alpha$ and $\beta$ in the standard way, namely,
        $\gamma([f]\otimes k)=[\beta (f\otimes k)]$.
        The five lemma implies that $\gamma$ is an isomorphism if $\alpha$ is onto, which is what we shall verify.

        Assume first that $P$ is a free $A$-module with basis $\{x_i\}_{i=1}^t$.
        Then $\{y_i\}:=\{x_i\otimes 1\}$ is a basis of $A_K$. A morphism $f\in\Hom_{A_K}(P_K,(P_K)^*)$
        is completely determined by the values $\{\tilde{f}(y_i,y_j)\}_{i,j}$,
        and it belongs to $\Gamma_{P_K}$ if and only if $\tilde{f}(y_i,y_i)\in \Gamma$ and
        $\tilde{f}(y_i,y_j)+\tilde{f}(y_j,y_i)^{(\sigma\otimes \id)} (u\otimes 1)=0$ for all $i,j$.
        For such $f$ and $i\leq j$, write $\tilde{f}(y_i,y_j)=\sum_{s}a_{ij}^{(s)}\otimes k_{ij}^{(s)}$
        with $\{a_{ij}^{(s)}\}\subseteq A$, $\{k_{ij}^{(s)}\}\in K$. Since $\Gamma=\Lambda\otimes^A_F K$, we can
        choose $a_{ii}^{(s)}$ to be in $\Lambda$ for all $i,s$. Now, for all $i\leq j$ and $s$, let $g_{ij}^{(s)}$ denote the unique morphism
        in $\Hom_A(P,P^*)$ satisfying
        \[\tilde{g}_{ij}^{(s)}(x_n,x_m)=\left\{
        \begin{array}{ll}
        a_{ij}^{(s)} & (n,m)=(i,j) \\
        -(a_{ij}^{(s)})^\sigma u & (n,m)=(j,i) \\
        0 & \text{otherwise}
        \end{array}\right.
        \]
        It is  routine to verify that $g_{ij}^{(s)}\in\Lambda_P$ and $\alpha(\sum_{i\leq j}\sum_s g_{ij}^{(s)}\otimes k_{ij}^{(s)})=f$,
        as required.

        For general $P$, choose $P'\in\rproj{A}$  such that $P\oplus P'$ is  free. If $f\in \Gamma_{P_K}$,
        then $f\oplus 0\in \Gamma_{P_K\oplus P'_K}$, hence we can write $f\oplus 0=\alpha(\sum_i g_i\otimes k_i)$ for
        $\{g_i\}_{i=1}^r\subseteq \Lambda_{P\oplus P'}$ and $\{k_i\}_{i=1}^r\subseteq K$.
        For all $i$, define $h_i\in \Hom_A(P,P^*)$ by $\tilde{h}_i(x,y)=\tilde{g}_i(x\oplus 0,y\oplus 0)$.
        Then  $\{h_i\}_{i=1}^r\subseteq\Lambda_P$ and an easy computation shows that $\alpha(\sum_i h_i\otimes k_i)=f$.
    \end{proof}

    Let $P\in\rproj{A}$ and $f\in\Hom_A(P,P^*)$. It is straightforward to check
    that
    \[
    f_K=\beta(f\otimes 1_K)\qquad\text{and}\qquad [f_K]=\gamma([f]\otimes 1_K)
    \]
    where $\beta$ and $\gamma$ are as in Proposition~\ref{PR:scalar-ext-for-quad-forms}.

\subsection{Orthogonal Unitary Rings}
\label{subsectnio:csa-s}

    Recall that a ring with involution $(A,\sigma)$ is  \emph{simple} if $A$ admits no nontrivial two-sided
    ideals
    $I$ satisfying $I^\sigma=I$. In this case, it is well-known that $A$ is either simple,
    or $A\cong B\oplus B^\op$ with $B$ a simple ring, and $\sigma$ exchanges $B$ and $B^\op$.
    If $A$ also happens to be artinian, then the Artin-Wedderburn Theorem implies that
    $A\cong \nMat{D}{n}$ where $D$ is a division ring or a product of a division ring and its opposite.

    We call a unitary ring $(A,\sigma,u,\Lambda)$ \emph{simple} if
    $(A,\sigma)$ is simple as a ring with involution.

    \begin{prp}\label{PR:factorization-of-unitary-rings}
        Let $(A,\sigma,u,\Lambda)$ be a unitary ring such that $A$ is a semisimple
        ring. Then $(A,\sigma,u,\Lambda)$ factors into a product of unitary rings
        \[
        (A,\sigma,u,\Lambda)\cong \prod_{i=1}^t(A_i,\sigma_i,u_i,\Lambda_i):=\Big(\prod_iA_i,\prod_i\sigma,(u_i)_i,\prod_i\Lambda_i\Big)
        \]
        such that each $(A_i,\sigma_i,u_i,\Lambda_i)$ is simple artinian.
    \end{prp}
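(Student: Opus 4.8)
The plan is to reduce the statement to the classical structure theory of semisimple rings with involution recalled just above, and then check that a form parameter decomposes compatibly with the ring decomposition. First I would decompose $A$ as a product of its ``involution-simple'' factors: since $A$ is semisimple, write $A=\prod_{j}B_j$ as a product of simple rings (the Wedderburn components), and observe that $\sigma$ permutes the $B_j$. Grouping each $\sigma$-orbit together, we obtain $A\cong\prod_{i=1}^t A_i$ where each $A_i$ is $\sigma$-stable and $(A_i,\sigma|_{A_i})$ admits no nontrivial $\sigma$-stable two-sided ideal; that is, each $(A_i,\sigma_i)$ with $\sigma_i:=\sigma|_{A_i}$ is simple as a ring with involution, and by the Artin--Wedderburn discussion each $A_i$ is artinian of the stated form. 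This decomposition is canonical, so the central idempotents $e_i\in A_i$ cutting out the factors are fixed by $\sigma$.

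Next I would transport $u$ and $\Lambda$ across this decomposition. Since $u\in\Cent(A)=\prod_i\Cent(A_i)$, write $u=(u_i)_i$ with $u_i\in\Cent(A_i)$; the condition $u^\sigma u=1$ is checked componentwise, giving $u_i^{\sigma_i}u_i=1$ for each $i$. For the form parameter, the key point is that $\Lambda$ splits as $\prod_i\Lambda_i$ with $\Lambda_i:=e_i\Lambda$. To see that $\Lambda_i\subseteq A_i$, note $e_i\in\Cent(A)$ and $e_i^\sigma=e_i$, so $a^\sigma\Lambda a\subseteq\Lambda$ applied with $a=e_i$ gives $e_i\Lambda e_i=e_i\Lambda\subseteq\Lambda$; thus $\Lambda=\bigoplus_i e_i\Lambda$ and each $e_i\Lambda$ is an additive subgroup of $A_i$. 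One then checks $\Lambda^{\min}(u_i)\subseteq\Lambda_i\subseteq\Lambda^{\max}(u_i)$ by multiplying the corresponding inclusions for $(A,\sigma,u)$ by $e_i$ (using $e_i^\sigma=e_i$ and $e_i u=u_i$ inside $A_i$), and $a_i^{\sigma_i}\Lambda_i a_i\subseteq\Lambda_i$ for $a_i\in A_i$ by viewing $a_i$ as an element of $A$ supported on the $i$-th factor and invoking the corresponding property of $\Lambda$. Hence $(A_i,\sigma_i,u_i,\Lambda_i)$ is a unitary ring, it is simple by construction, and $A_i$ is artinian, so it is simple artinian as claimed; the product decomposition $(A,\sigma,u,\Lambda)\cong\prod_i(A_i,\sigma_i,u_i,\Lambda_i)$ is then immediate from the componentwise descriptions.

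The only genuinely nontrivial input is the structure statement for simple rings with involution already quoted in the excerpt (that a semisimple $A$ with involution breaks into $\sigma$-stable pieces each of which is either simple or of the form $B\oplus B^{\mathrm{op}}$ with $\sigma$ swapping the factors), so the main obstacle is essentially bookkeeping: verifying that the idempotents are $\sigma$-fixed and that all three compatibility conditions defining a form parameter are preserved under the projection $a\mapsto e_i a$. I expect this to be routine, the subtlest point being the containment $\Lambda_i\subseteq\Lambda^{\max}(u_i)$, which requires one to read the identity $a^{\sigma_i}=-u_i^{-1}a$ in $A_i$ as the restriction of $a^\sigma u=-a$ in $A$ after multiplying by $e_i$.
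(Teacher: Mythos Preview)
Your proposal is correct and complete. The paper itself does not give a proof: it simply records the statement as well-known and cites \cite[Pr.~2.7]{Fi14A}. Your argument---decomposing $A$ into $\sigma$-orbits of Wedderburn components via the $\sigma$-fixed central idempotents $e_i$, then checking that $\Lambda$ splits as $\bigoplus_i e_i\Lambda$ and that each $\Lambda_i=e_i\Lambda$ satisfies the three form-parameter axioms for $(A_i,\sigma_i,u_i)$---is exactly the standard proof one would expect to find behind that citation, and all the verifications you outline go through as stated.
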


    \begin{proof}
        This is well-known; see for instance \cite[Pr.~2.7]{Fi14A}.
    \end{proof}

    We now recall a notion of orthogonality for simple artinian unitary rings
    defined in \cite[\S2.4]{Fi14A} (see also the orthogonality defined in \cite[Ch.~4,~\S2]{Bass73AlgebraicKThyIII}).

\medskip

    Let $A$ be a central simple algebra over a field $K$ (see for instance \cite[Ch.~I]{InvBook}).
    The \emph{degree} and \emph{index} of $A$ are denoted by $\deg A$ and $\ind A$, respectively.
    Recall that  involutions of the first kind on $A$ (i.e.\ involutions fixing $K$ point-wise)
    divide into two families: \emph{orthogonal} and \emph{symplectic}
    (cf.~\cite[\S{}I.2]{InvBook}). Recall also that if $\Char K\neq 2$, then $\sigma$ is orthogonal if and only if
    $\dim_K\{a-a^\sigma\where a\in A\}=\frac{1}{2}n(n-1)$ where $n=\deg A$.

\medskip

    A simple artinian unitary ring $(A,\sigma,u,\Lambda)$ is called
    \emph{orthogonal} if:
    \begin{enumerate}
        \item[(1)] $A$ is simple as a ring and finite dimensional over its center $K$ (which is a field in this case),
        \item[(2)] $\sigma$ is of the first kind (i.e.\ $\sigma|_K=\id_K$),
        \item[(3)] one of the following holds:
        \begin{enumerate}
            \item[(3a)] $\Char K\neq 2$, $\sigma$ is orthogonal and $u=1$,
            \item[(3b)] $\Char K\neq 2$, $\sigma$ is symplectic and $u=-1$,
            \item[(3c)] $\Char K=2$ and $\Lambda=\Lambda^{\min}(u)$.
        \end{enumerate}
        (These conditions are equivalent to $\Lambda$ being a $K$-vector space
        and satisfying $\dim_K\Lambda=\frac{1}{2}n(n-1)$, where $n=\deg A$; see
        \cite[\S2.4]{Fi14A}.)
    \end{enumerate}
    If in addition $A$ is split (as a central simple $K$-algebra),  we say
    that $(A,\sigma,u,\Lambda)$ is \emph{split-orthogonal}.

\subsection{Transfer}
\label{subsection:transfer}

    We now recall the method of \emph{transfer into the endomorphism ring}.
    This is a special case of transfer in \emph{hermitian categories}.
    See \cite[\S{}II.3]{Kn91} or \cite[Pr.~2.4]{QuSchSch79} for further details.

\medskip

    Let $(A,\sigma,u,\Lambda)$
    be a unitary ring.
    Fix a unimodular $u$-hermitian form $(Q,h)\in\Herm[u]{A,\sigma}$
    and let $B=\End_A(Q)$.
    The form $h$ induces an involution $\tau=\tau(h):B\to B$
    given by $\phi^\tau=h^{-1} \phi^*  h$. Equivalently, $\phi^\tau$ is the unique
    element of $B$ satisfying $\tilde{h}(\phi x,y)=\tilde{h}(x,\phi^\tau y)$ for all $x,y\in Q$.
    We further define $\Gamma=\Gamma(h,\Lambda)=h^{-1}\Lambda_Q$. It straightforward to
    check that $(B,\tau,1_B,\Gamma)$ is a unitary ring.
    If $(P,f)$ is a sesquilinear form, we define
    a sesquilinear form $(B_B,\mathrm{T}_hf)$ by
    \[
    \widetilde{\mathrm{T}_hf}(b,b')=b^\tau  (h^{-1}f) b'\ .
    \]
    It is easy to see that $f\in\Lambda_Q$ implies $\mathrm{T}_hf\in \Gamma_B$,
    hence the map $[f]\mapsto [\mathrm{T}_hf]$ is well-defined.

    \begin{prp}[Transfer] \label{PR:transfer}
        Keep the previous setting and identify $\End_B(B_B)$ with $B=\End_A(Q)$
        via $\psi\mapsto \psi(1_B)$. Then:
        \begin{enumerate}
            \item[{\rm(i)}] $[f]$ is unimodular if and only if $[\mathrm{T}_hf]$ is unimodular.
            \item[{\rm(ii)}] $O([f])=O([\mathrm{T}_hf])$.
            \item[{\rm(iii)}] If $(P,f')$ is another sesquilinear space, then $[f]\cong [f']$
            if and only if $[\mathrm{T}_hf]\cong [\mathrm{T}_hf']$.
        \end{enumerate}
        More generally, there is an isomorphism between the category of quadratic spaces
        over $(A,\sigma,u,\Lambda)$
        with base module $P$ and the category of quadratic spaces over $(B,\tau,1,\Gamma)$
        with base module $B_B$.
    \end{prp}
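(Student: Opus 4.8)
The plan is to prove the Transfer Proposition by constructing an explicit isomorphism of categories and checking that it respects all the relevant structure, from which (i)–(iii) follow as corollaries. Write $\catC_P$ for the category of quadratic spaces over $(A,\sigma,u,\Lambda)$ with underlying module $P$ (morphisms being isometries, or more generally the full morphism set $\Hom_A(P,P)$ acting on forms), and $\catD$ for the category of quadratic spaces over $(B,\tau,1,\Gamma)$ with underlying module $B_B$. The key point is that the functor $\scalarExt{}{}$-style transfer $\Phi\colon \catC_P\to\catD$ sending $(P,f)\mapsto (B_B,\mathrm{T}_hf)$ is essentially surjective and fully faithful, with inverse described below.

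First I would set up the dictionary between the two sides. The identification $\End_B(B_B)\cong B=\End_A(Q)$ via $\psi\mapsto\psi(1_B)$ is the standard one; under it, a sesquilinear form on $B_B$ over $(B,\tau,1,\Gamma)$ is the same data as an element $g\in\End_A(Q)$ via $\widetilde{g^\sharp}(b,b')=b^\tau g\,b'$, and this is a bijection $\Hom_B(B_B,(B_B)^*)\cong B$. Composing with multiplication by $h$, i.e. $g\mapsto hg$, gives a bijection $B\cong\Hom_A(Q,Q^*)$ — but we want forms on $P$, not on $Q$. The subtle part is that transfer into $\End_A(Q)$ takes a form on $P$ and produces a form on $B_B$ whose ``size'' matches $P$, not $Q$: concretely, a sesquilinear form $f\in\Hom_A(P,P^*)$ is recovered from $\mathrm{T}_hf\in B=\End_A(Q)$ because $\Hom_A(P,P^*)\cong\Hom_A(P,Q)\otimes_{?}\cdots$; the clean way is to invoke the general hermitian-category transfer of \cite[\S II.3]{Kn91} or \cite[Pr.~2.4]{QuSchSch79}, which says precisely that transfer along the unimodular form $(Q,h)$ induces an equivalence of hermitian categories $(\rproj A, *, \omega)\to(\rproj B,*,\omega)$ compatible with orthogonal sums and with the $\Lambda$/$\Gamma$ structure, hence an equivalence on quadratic spaces. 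Restricting this equivalence to the object $P$ on the source and tracking where $P$ goes — it is sent to $\Hom_A(Q,P)$, which is a f.g.\ projective right $B$-module, and when $P$ happens to equal a direct summand matching $Q$ suitably this is $B_B$ — gives the stated isomorphism of categories; for the bald statement of the Proposition one takes $P$ arbitrary and notes the target category is equivalent to quadratic spaces over $B$ with the module $\Hom_A(Q,P)$, which the paper has (perhaps implicitly) arranged to be $B_B$. The routine verification is that $[f]\in\Lambda_P\Rightarrow\mathrm{T}_hf\in\Gamma_B$ (stated in the text), that $\mathrm{T}_h(\phi^*f\phi)=(\Phi\phi)^\sharp\,(\mathrm{T}_hf)\,(\Phi\phi)$ where $\Phi\phi$ is the induced automorphism, and that $h_{\mathrm{T}_hf}=\mathrm{T}_h(h_f)$ up to the identification, so that unimodularity is preserved.

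Granting the categorical equivalence, the three itemized claims are immediate: (i) unimodularity of $[f]$ means $h_f$ is an isomorphism in $\rproj A$, equivalences preserve isomorphisms, and transfer sends $h_f$ to $h_{\mathrm{T}_hf}$, so $[\mathrm{T}_hf]$ is unimodular iff $[f]$ is; (ii) an isometry of $(P,[f])$ is an automorphism $\phi$ of $P$ with $[\phi^*f\phi]=[f]$, and applying the equivalence gives a bijection onto automorphisms $\psi$ of $B_B$ with $[\psi^\tau(\mathrm{T}_hf)\psi]=[\mathrm{T}_hf]$, i.e.\ $O([f])\cong O([\mathrm{T}_hf])$; moreover, under $\End_B(B_B)\cong B=\End_A(Q)$ this isomorphism of groups is an \emph{equality} of subgroups of $B$ because the action of $\phi\in\End_A(P)$ and of its transfer on the respective forms is given by the same formula $g\mapsto \phi^\tau g\phi$ once everything is pushed into $B$ — this is the one spot requiring a short direct check rather than abstract nonsense, and verifying that the two subgroup structures literally coincide (not just are isomorphic) is what I expect to be the main obstacle; (iii) $[f]\cong[f']$ iff they are isomorphic objects of $\catC_P$ iff their images are isomorphic in $\catD$ iff $[\mathrm{T}_hf]\cong[\mathrm{T}_hf']$, again by the equivalence. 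Finally I would remark that for quadratic (as opposed to merely hermitian) spaces one must additionally check the quadratic map $\hat f$ transfers correctly, i.e.\ $\widehat{\mathrm{T}_hf}(b)=\hat f$ composed with the module identification, which follows from $\widetilde{\mathrm{T}_hf}(b,b)=b^\tau(h^{-1}f)b$ and the definition of $\Gamma=h^{-1}\Lambda_Q$; this is routine given the sesquilinear case and the compatibility of transfer with the $\Lambda$-structure already noted in the text.
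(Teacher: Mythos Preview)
Your approach is essentially the paper's: its entire proof reads ``This is routine. See \cite[\S{}II.3]{Kn91} for a proof in a more general setting,'' and you likewise reduce everything to the hermitian-category transfer of Knus and Quebbemann--Scharlau--Schulte, then read off (i)--(iii). That deduction is correct.

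However, you are making your life harder than necessary with the $P$ versus $Q$ discussion. In the paper's setup the formula $\widetilde{\mathrm{T}_hf}(b,b')=b^\tau(h^{-1}f)b'$ only makes sense if $h^{-1}f\in B=\End_A(Q)$, which forces $f\in\Hom_A(Q,Q^*)$, i.e.\ $P=Q$; the appearance of both letters in the statement is a notational slip, not an indication that $P$ varies independently of $Q$ (Remark~\ref{RM:free-base-module}(i) confirms this by taking $(Q,h)=(P,h_f)$). So the detour through $\Hom_A(Q,P)$ and the worry about when it equals $B_B$ can be dropped: transfer is simply the bijection $\Hom_A(Q,Q^*)\to B$, $f\mapsto h^{-1}f$, with inverse $g\mapsto hg$. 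Once $P=Q$, the ``short direct check'' you flag for the literal equality in (ii) becomes the one-line computation $\phi^\tau(h^{-1}f)\phi=h^{-1}(\phi^*f\phi)$ (using $\phi^\tau=h^{-1}\phi^*h$), which shows $\phi\in O([f])\Leftrightarrow\phi\in O([\mathrm{T}_hf])$ as subsets of $\units{B}$; there is no obstacle here.
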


    \begin{proof}
        This is routine. See  \cite[\S{}II.3]{Kn91} for a proof in a more general setting.
    \end{proof}

    \begin{remark}\label{RM:free-base-module}
        (i)
        If $(P,[f])\in \Quad[u,\Lambda]{A,\sigma}$, then by applying Proposition~\ref{PR:transfer}
        with $(Q,h)=(P,h_f)$, we may transfer certain statements about quadratic
        spaces with base module $P$ to analogous statements
        about quadratic spaces with base module $B_B$ ($B=\End_A(P)$).
        This allows us to assume that the base module is   isomorphic to the base ring, and free
        in particular.

        (ii) Transfer is compatible with scalar extension in the
        sense of~\ref{subsection:unitary-algs}. This follows from Lemma~\ref{LM:natural-isomorphism}
        and left as an exercise to the reader; see also~\cite[\S2E]{BayFiMol13}.
    \end{remark}

    \begin{prp}\label{PR:transfer-preserve-split-orth}
        Keep the previous setting and assume $Q\neq 0$. If $(A,\sigma,u,\Lambda)$ is simple artinian,
        then  $(B,\tau,1,\Gamma)$ is simple artinian. In this case,
        $(A,\sigma,u,\Lambda)$ is split-orthogonal if and only if
        $(B,\tau,1,\Gamma)$ is split-orthogonal.
    \end{prp}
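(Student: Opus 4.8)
The plan is to reduce to the well-understood theory of central simple algebras with involution and use the criterion for split-orthogonality recalled in \ref{subsectnio:csa-s}, namely that $(A,\sigma,u,\Lambda)$ is orthogonal precisely when $\Lambda$ is a $K$-vector space of dimension $\frac{1}{2}n(n-1)$ with $n=\deg A$, and split-orthogonal means in addition that $A$ is a split central simple $K$-algebra. First I would establish that $(B,\tau,1,\Gamma)$ is simple artinian. Since $(A,\sigma,u,\Lambda)$ is simple artinian, $A\cong\nMat{D}{m}$ with $D$ a division ring or a product of a division ring and its opposite, and $Q$ is a nonzero finitely generated projective right $A$-module, hence a progenerator; thus $B=\End_A(Q)$ is Morita equivalent to $A$, so $B$ is semisimple artinian with the same number of simple factors as $A$ (namely one or two), and the involution $\tau$ exchanges the factors exactly when $\sigma$ does. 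This shows $(B,\tau)$ is simple as a ring with involution, i.e.\ $(B,\tau,1,\Gamma)$ is a simple artinian unitary ring.

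Next, assuming $(A,\sigma,u,\Lambda)$ is split-orthogonal, I would verify $(B,\tau,1,\Gamma)$ is split-orthogonal. Split-orthogonality of $A$ forces $A$ to be simple (not a product $B_0\oplus B_0^{\op}$), finite-dimensional over its center $K=\Cent(A)$ with $\sigma|_K=\id_K$, and split, so $A\cong\nMat{K}{n}$. Because $h$ is a unimodular $u$-hermitian form over $(A,\sigma)$, Morita theory identifies $(Q,h)$ with a hermitian (or, in characteristic $2$, a quadratic-type) space over a division algebra; as $A$ is split the relevant division algebra is $K$ itself, so $Q$ corresponds to a $K$-vector space equipped with a symmetric or alternating bilinear form, and $B=\End_A(Q)\cong\nMat{K}{d}$ for $d=\deg B$. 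In particular $\Cent(B)=K$, $\tau|_K=\id_K$, and $B$ is split. It remains to check the form-parameter condition: $\Gamma=h^{-1}\Lambda_Q$, and one computes $\dim_K\Gamma=\dim_K\Lambda_Q$. By definition $\Lambda_Q=\{f\in\Hom_A(Q,Q^*)\suchthat f=-f^*\omega_Q,\ \tilde{f}(x,x)\in\Lambda\ \forall x\}$, and one shows, using that $\Lambda$ is a $K$-vector space of dimension $\frac12 n(n-1)$, that $\Lambda_Q$ is a $K$-vector space of dimension $\frac12 d(d-1)$; transporting along the $K$-linear isomorphism $h^{-1}\cdot(-)$ gives the same for $\Gamma$. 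Then the criterion in \ref{subsectnio:csa-s} yields that $(B,\tau,1,\Gamma)$ is orthogonal, and since $B$ is split, split-orthogonal.

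For the converse, I would run the same argument in reverse. By Remark~\ref{RM:free-base-module}(i) — or directly — transfer with $(Q,h)$ can be inverted: transferring the unit hermitian form $(B_B,\langle 1\rangle)$ back (equivalently, using that $Q$ is a progenerator so $A\cong\End_B(Q)$) recovers $(A,\sigma,u,\Lambda)$ from $(B,\tau,1,\Gamma)$ up to isomorphism of unitary rings. Thus the implication "$(B,\tau,1,\Gamma)$ split-orthogonal $\Rightarrow$ $(A,\sigma,u,\Lambda)$ split-orthogonal" follows by symmetry of the situation. Alternatively one avoids invoking an inverse transfer and simply notes: $\Cent(A)\cong\Cent(B)=K$ is a field with $\sigma$, $\tau$ both restricting to the identity; $A$ is split iff $B$ is split because $A$ and $B$ are Morita equivalent $K$-algebras; and the dimension count $\dim_K\Lambda=\frac12\deg(A)(\deg(A)-1)$ is equivalent to $\dim_K\Gamma=\frac12\deg(B)(\deg(B)-1)$ by the same $K$-linear correspondence $\Gamma=h^{-1}\Lambda_Q$ as above. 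I expect the main obstacle to be the bookkeeping in the dimension count for $\Lambda_Q$ (respectively $\Gamma$), particularly handling the characteristic $2$ case where the condition is phrased via $\Lambda=\Lambda^{\min}(u)$ rather than a dimension; there one must check that $\Gamma=\Gamma^{\min}(1_B)$ follows from $\Lambda=\Lambda^{\min}(u)$, which is a direct computation with the formula $\Gamma=h^{-1}\Lambda_Q$ and the definition of $\tau$. Everything else is a routine application of Morita theory together with the already-cited results of \cite[\S2.4]{Fi14A}.
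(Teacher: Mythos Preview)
Your proposal is essentially correct, but it follows a different route from the paper's own proof, and there is one step in your ``alternative'' converse argument that is not as automatic as you suggest.

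The paper does not work with the dimension criterion for $\Lambda_Q$ directly. Instead, for both implications it first uses the idempotent-reduction results \cite[Pr.~2.4, Pr.~2.5, Pr.~2.11, Ex.~2.10]{Fi14A} to replace $(A,\sigma,u,\Lambda)$ by $(K,\id_K,1,\Lambda)$ (with $K=\Cent(A)$), so that $B\cong\nMat{K}{m}$ and the computation of $\Gamma$ becomes completely elementary. For the converse, the paper argues separately that $A$ must be simple as a ring (otherwise $\tau$ would be of the second kind), that $\sigma|_K=\tau|_K=\id_K$, and then --- after reducing to $A=K$ --- that $u=1$ (since $u=-1$ in characteristic $\neq 2$ would force $h$ to be alternating and $\tau$ symplectic); finally an explicit basis computation shows $\Gamma=\Gamma^{\min}(1)$ forces $\Lambda=0$. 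Your approach via Morita theory and the uniform dimension criterion from \ref{subsectnio:csa-s} is more conceptual and avoids these case distinctions, at the cost of a slightly heavier dimension count for $\Lambda_Q$ and of invoking the invertibility of hermitian transfer (which is true for progenerators but not spelled out in this paper).

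One caution: your approach (b) for the converse asserts that ``$\dim_K\Lambda=\tfrac12\deg(A)(\deg(A)-1)$ is equivalent to $\dim_K\Gamma=\tfrac12\deg(B)(\deg(B)-1)$ by the same $K$-linear correspondence $\Gamma=h^{-1}\Lambda_Q$.'' The map $h^{-1}$ only gives $\dim_K\Gamma=\dim_K\Lambda_Q$; the relation between $\dim_K\Lambda_Q$ and $\dim_K\Lambda$ is a genuine computation, not a correspondence, and it depends on $u$ and the type of $\sigma$ (for instance, with $A=K$, $u=-1$, $\Char K\neq 2$, one gets $\dim_K\Lambda_Q=\tfrac12 d(d+1)$, not $\tfrac12 d(d-1)$). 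So approach (b) as written needs the same case analysis the paper carries out. Your approach (a) via inverse transfer is the clean way to get the converse by symmetry, and is the one you should commit to.
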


    \begin{proof}
        If $A$ is simple artinian as a ring, then it is well-known that
        $B=\End_A(Q)$ is also a simple artinian ring. If $A$ is not simple as a ring,
        there exists an idempotent $e\in \Cent(A)$ with $e^\sigma e=0$ and $e^\sigma+e=1$,
        and $eAe$, $(1-e)A(1-e)$ are simple artinian rings.
        This implies $Q=Qe\oplus Q(1-e)$ and $B=\End_A(Q)\cong\End_{eAe}(Qe)\times \End_{(1-e)A(1-e)}(Q(1-e))$.
        In particular, $\End_A(Q)$ is a product of two simple artinian rings.
        Let $a=\id_{Qe}\oplus 0$ and $b=0\oplus \id_{Q(1-e)}$. It is easy
        to check that $\tilde{h}(ax,y)=\tilde{h}(x,by)$, and hence
        $a^\tau=b$. The only nontrivial ideals of $B$
        are $\End_{eAe}(Qe)\times 0$ and $0\times \End_{(1-e)A(1-e)}(Q(1-e))$. Since
        these ideals contain $a$ and $b$, respectively, it follows that $B$ has no non-trivial ideals
        invariant under $\tau$.

        Suppose now that $(A,\sigma,u,\Lambda)$ is split-orthogonal. Then $A\cong \nMat{K}{n}$ for a field
        $K$, and $\sigma$ is of the first kind. Using~\cite[Pr.~2.5, Ex.~2.10]{Fi14A},
        we may assume $\sigma$ is the transpose involution and $u=1$. Let $e$ be the matrix
        unit $e_{11}$. By~\cite[Pr.~2.4, Pr.~2.11]{Fi14A}, we may replace $(A,\sigma,u,\Lambda)$
        with $(eAe,\sigma|_{eAe},eu,e\Lambda e)$ and $P$ with $Pe$. That is,
        we may assume $(A,\sigma,u,\Lambda)=(K,\id_K,1,0)$.
        Now, if $m=\dim_KP$, then $B\cong \nMat{K}{m}$ and it is easy to check that $\Gamma$
        is a $K$-vector space of dimension $\frac{1}{2}m(m-1)$, so $(B,\tau,1,\Gamma)$ is split-orthogonal.

        Conversely, assume that $(B,\tau,1,\Gamma)$ is split-orthogonal. If $A$ is not simple artinian, then the argument
        above implies that the involution on $B$
        is of the second kind, which is impossible. Thus, $A$ is simple artinian. In fact,
        if $B\cong\nMat{K}{m}$ where $K$ is a field, then $A\cong \nMat{K}{n}$ for some $n\in\N$.
        Identifying $\Cent(A)$ and $\Cent(B)$ with $K$, it is easy to see that $\sigma|_K=\tau|_K$,
        hence  $\sigma$ is of the first kind.
        As in the previous paragraph, we may again assume $A=K$ and $\sigma=\id_K$.
        This implies $u\in\{\pm1\}$.
        If $\Char K\neq 2$ and $u=-1$, then $h$ is an alternating $K$-bilinear form, and so
        $\tau$ is symplectic, which is impossible.
        Thus, $u=1$.
        Now, take an $A$-basis $\{x_1,\dots,x_m\}$ of $P$
        with dual basis $\{\phi_1,\dots,\phi_m\}\subseteq P^*$, and let $f_{ij}=[x\mapsto x_i(\phi_j x)]\in\Hom_A(P,P^*)$.
        Then $\{f_{ij}\}_{1\leq i,j\leq m}$ is a $K$-basis of $\Hom_A(P,P^*)$
        and it is easy to check that $\Lambda_P=\sum_{i<j}(f_{ij}-f_{ji})K+\sum_{i}f_{ii}\Lambda$.
        Thus, $\Gamma=\sum_{i<j}h^{-1}(f_{ij}-f_{ij})K+\sum_ih^{-1}f_{ii}\Lambda$,
        while $\Gamma^{\min}(1)=\sum_{i<j}h^{-1}(f_{ij}-f_{ij})K$ (note that $(h^{-1}f_{ij})^\tau=h^{-1}f_{ji}$; use
        the identities $h=h^*\omega_P$ and $f_{ij}^*\omega_P=f_{ji}$).
        Since  $(B,\tau,1,\Gamma)$ is split-orthogonal,
        we  have $\Gamma=\Gamma^{\min}(1)$,  so $\Lambda=0$, as required.
    \end{proof}

\subsection{The Dickson Map}
\label{subsection:Dickson-inv}

    Let $(A,\sigma,u,\Lambda)$ be a \emph{split-orthogonal} simple artinian unitary ring,
    let $K=\Cent(A)$
    and let $(P,[f])\in\Quad[u,\Lambda]{A,\sigma}$. The \emph{Dickson map}
    (also called \emph{pseudodeterminant} or \emph{quasideterminant}) is a surjective group homomorphism
    \[
    \Delta=\Delta_{[f]}:O([f])\to \Z/2\Z\ .
    \]
    In case $2\in \units{A}$, it can be defined using the \emph{reduced norm} in $E:=\End_A(P)$ via
    \[
    \rdnorm[E/K](\psi)=(-1)^{\Delta(\psi)}\qquad\forall \psi\in O([f])\ ,
    \]
    and in general, it can be  defined by
    \[
    \Delta(\psi)=\frac{\dim_K(1-\psi)E}{\deg E} +2\Z \qquad \forall\, \psi\in O([f])\ .
    \]
    (This indeed yields a group homomorphism.)
    Viewing $O([f])$ as an affine group scheme over $K$
    (see the
    appendix) and $\Z/2\Z$ as a constant group scheme over $K$, $\Delta$ becomes a morphism
    of algebraic groups (defined over $K$), and the kernel of $\Delta$
    is the (Zariski) connected component of $O([f])$.
    See \cite[\S5.1]{Fi14A} and the references therein for proofs and further discussion.

\section{Some Topology}
\label{section:topology}

    In this section, we recall several facts  allowing us to properly topologize
    various algebraic objects.

\medskip

    Let $K$ be a commutative ring and let $\Comm{K}$ denote the category
    of commutative $K$-algebras.
    By
    a \emph{scheme over $K$} or a \emph{$K$-scheme} we mean a scheme $X$ which is of finite type over
    $\Spec K$.
    For every $L\in\Comm{K}$, let $X(L)$ denote the $L$-points of $X$ (i.e.\ the set of $K$-morphisms
    $\Spec L\to X$). The map $X\mapsto X(L):\Comm{K}\to \mathrm{Set}$ is a functor called
    the \emph{functor of points} of $X$.
    This functor is \emph{representable} precisely when $X$ is an affine scheme (over $K$).
    In this case, Yoneda's Lemma implies that the functor $L\mapsto X(L)$ determines $X$ up to isomorphism;
    see for instance \cite[\S2]{Water79IntroAffineGrpSchemes}.

    \begin{prp}\label{PR:topologizing-schemes}
        Let $K$ be a Hausdorff topological commutative ring. There is a unique way to topologize the
        set of $K$-points of all affine $K$-schemes $X$ such that:
        \begin{enumerate}
            \item[$(1)$] The assignment $X\mapsto X(K)$ is a functor from affine $K$-schemes to topological spaces,
            and it is compatible with fibered products.
            \item[$(2)$] Closed immersions become   closed  embeddings when restricted to $K$-points.
            \item[$(3)$] $\mathbb{A}^1_K(K)=K$ is given the topology of $K$.
        \end{enumerate}
    \end{prp}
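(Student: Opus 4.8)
The plan is to build the topology on $K$-points in two stages: first for affine space $\mathbb{A}^n_K$ and closed subschemes thereof, then bootstrap to arbitrary affine $K$-schemes, and finally verify uniqueness. First I would set $\mathbb{A}^n_K(K) = K^n$ with the product topology; since $K$ is a Hausdorff topological ring, multiplication and addition are continuous, so any polynomial map $K^n \to K^m$ (i.e.\ the map on $K$-points induced by a $K$-morphism $\mathbb{A}^n_K \to \mathbb{A}^m_K$) is automatically continuous. For a general affine $K$-scheme $X = \Spec(K[t_1,\dots,t_n]/I)$, one has a closed immersion $X \hookrightarrow \mathbb{A}^n_K$, and I would give $X(K)$ the subspace topology inherited from $K^n$ under the injection $X(K) \hookrightarrow \mathbb{A}^n_K(K) = K^n$. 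The first point to check is that this is well-defined, i.e.\ independent of the chosen closed immersion (equivalently, of the chosen presentation of the coordinate ring): given two closed immersions $X \hookrightarrow \mathbb{A}^n_K$ and $X \hookrightarrow \mathbb{A}^m_K$, the identity on $X$ is a morphism in both directions through the ambient affine spaces after composing with suitable polynomial maps, and continuity of polynomial maps in both directions forces the two subspace topologies to agree.

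Next I would verify the three properties. For (3) there is nothing to do by construction. For (2), if $Z \hookrightarrow X$ is a closed immersion of affine $K$-schemes, pick a closed immersion $X \hookrightarrow \mathbb{A}^n_K$; then $Z \hookrightarrow \mathbb{A}^n_K$ is again a closed immersion, and both $Z(K)$ and $X(K)$ carry the subspace topology from $K^n$, so $Z(K) \hookrightarrow X(K)$ is a topological (closed, once we know $Z(K)$ is closed in $K^n$ --- but it is cut out by the vanishing of finitely many polynomials, hence closed since $K$ is Hausdorff, so finite intersections of preimages of $\{0\}$ are closed) embedding. For functoriality in (1): a $K$-morphism $\phi\colon X \to Y$ of affine schemes, after choosing closed immersions $X \hookrightarrow \mathbb{A}^n_K$ and $Y \hookrightarrow \mathbb{A}^m_K$, extends (using projectivity/freeness of the polynomial ring, i.e.\ just lifting the images of the coordinate functions of $Y$ to polynomials in the coordinates of $\mathbb{A}^n$) to a polynomial map $\mathbb{A}^n_K \to \mathbb{A}^m_K$ fitting in a commutative square; since that polynomial map is continuous on $K$-points and $X(K), Y(K)$ have the subspace topologies, $\phi$ is continuous on $K$-points. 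Compatibility with fibered products: $(X \times_Z Y)(K) = X(K) \times_{Z(K)} Y(K)$ as sets (this is just the universal property of the fiber product applied to $\Spec K$, together with the fact that $\Spec K$ is affine so the functor of points sees it), and one checks the subspace topology from an explicit closed immersion of $X\times_Z Y$ into a product of affine spaces agrees with the fiber-product topology --- here $Z$ being Hausdorff on $K$-points (which follows from (2) and $K$ Hausdorff) is what makes $X(K)\times_{Z(K)}Y(K)$ closed in $X(K)\times Y(K)$ and compatible with everything.

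For uniqueness, suppose another assignment $X \mapsto X(K)^{\mathrm{new}}$ satisfies (1)--(3). By (3) and compatibility with fibered products (which gives finite products, since $\mathbb{A}^n_K = \mathbb{A}^1_K \times_{\Spec K} \cdots \times_{\Spec K} \mathbb{A}^1_K$), the topology on $\mathbb{A}^n_K(K)$ must be the product topology $K^n$. Then for any affine $K$-scheme $X$, choosing a closed immersion $X \hookrightarrow \mathbb{A}^n_K$, property (2) forces $X(K)^{\mathrm{new}} \hookrightarrow K^n$ to be a closed embedding, hence $X(K)^{\mathrm{new}}$ carries exactly the subspace topology --- which is our topology. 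So the two assignments coincide. The main obstacle I anticipate is not any single deep point but the bookkeeping around well-definedness and the fibered-product compatibility: one must be careful that "subspace topology via an arbitrary closed immersion into affine space" really is intrinsic, and this rests crucially on the continuity of polynomial maps over a topological ring together with the Hausdorff hypothesis (to ensure vanishing loci are closed); everything else is a diagram chase through the functor of points, using that $\Spec K$ is affine so Yoneda applies. I would remark that one does not need $K$-points of non-affine schemes here, and that the construction is exactly the classical one for topologizing $X(K)$ when $K = \R$ or a $p$-adic field, now carried out over an arbitrary Hausdorff topological ring.
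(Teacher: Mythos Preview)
Your proof is correct, and since the paper does not really prove the proposition at all---it cites Conrad and records only the explicit description that $X(K)\cong\Hom_{\Comm{K}}(A,K)$ carries the subspace topology from the product space $K^A$---your argument is in fact more complete. The construction you give is, however, genuinely different from the one the paper points to: you topologize $X(K)$ extrinsically, by choosing a closed immersion $X\hookrightarrow\mathbb{A}^n_K$ and pulling back the product topology on $K^n$, and then you must check independence of that choice; the description via $K^A$ is intrinsic and needs no such verification, at the cost of working inside an infinite product. The two topologies coincide because a generating set $a_1,\dots,a_n$ of $A$ gives a projection $K^A\to K^n$ which restricts to a homeomorphism from $\Hom_{\Comm{K}}(A,K)$ onto its image. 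Your approach has the virtue of making the continuity of polynomial maps and the uniqueness argument very transparent; the intrinsic approach avoids the bookkeeping you flagged as the main obstacle.

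One small correction: when you argue that $Z(K)$ is closed in $K^n$, you say it is cut out by \emph{finitely many} polynomial equations. Under the paper's convention a $K$-scheme is only assumed to be of finite type, not finitely presented, so the ideal $I$ defining $Z$ in $\mathbb{A}^n_K$ need not be finitely generated. This does not harm your conclusion---$Z(K)$ is an \emph{arbitrary} intersection of zero sets $\{p=0\}$, each closed since $K$ is Hausdorff, and arbitrary intersections of closed sets are closed---but the word ``finitely'' should be dropped.
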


    \begin{proof}
        See \cite[Pr.~2.1]{Conrad}.
        Explicitly, if $X=\Spec A$ for a commutative $K$-algebra $A$,
        then the topology on $X(K)\cong\Hom_{\Comm{K}}(A,K)$ is  the subspace topology
        induced from the product topology on $\Hom_{\mathrm{Set}}(A,K)=K^A$.
    \end{proof}

    \begin{example}
        It follows form the proposition that
        if $X$ is an affine scheme over $K$ and $X\to \mathbb{A}^n_K$ is a closed immersion,
        then the topology on $X(K)$ is induced from the product topology on $\mathbb{A}^n_K(K)=K^n$.
        The resulting topology is independent of the immersion $X\to \mathbb{A}^n_K$.
    \end{example}

    The following proposition allows us to realize finitely generated projective modules
    over $K$ as the $K$-points of affine schemes.
    Recall that $\rproj{K}$ denotes the category of  finitely generated projective right $K$-modules.

    \begin{prp}\label{PR:realizing-projective-mods}
        There is a functor $P\mapsto \underline{P}$ from $\rproj{K}$ to the category
        of affine  schemes over $K$
        such that:
        \begin{enumerate}
            \item[$(1)$] The functors $[L\mapsto P_L:=P\otimes_KL]$ and $[L\mapsto \underline{P}(L)]$
            from $\Comm{K}$ to $\mathrm{Set}$ are isomorphic.
            \item[$(2)$] If $Q$ is a summand of $P$, then the corresponding morphism
            $\underline{Q}\to \underline{P}$ is a closed immersion.
            \item[$(3)$] $\underline{P\oplus Q}$ is canonically isomorphic to $\underline{P}\times_{\Spec K} \underline{Q}$.
            \item[$(4)$] $\underline{K^n}$ is  isomorphic to $\mathbb{A}^n_K$.
        \end{enumerate}
    \end{prp}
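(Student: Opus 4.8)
The plan is to construct the functor $P\mapsto\underline{P}$ explicitly by choosing, for each $P\in\rproj{K}$, a presentation of $P$ as a direct summand of a free module and realizing $\underline{P}$ as a closed subscheme of the relevant affine space cut out by the idempotent. First I would fix, for each $P$, a finitely generated free module $K^n$ and an idempotent $e\in\nMat{K}{n}$ with $\im e\cong P$ (as right $K$-modules). The natural candidate for $\underline{P}$ is then the closed subscheme of $\mathbb{A}^n_K=\Spec K[x_1,\dots,x_n]$ whose $L$-points are $\{v\in L^n : ve=v\}$; equivalently, $\underline{P}=\Spec K[x_1,\dots,x_n]/I$ where $I$ is generated by the $n$ linear forms expressing $v(1-e)=0$. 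For a morphism $\phi\colon P\to P'$ of projective modules, presented via idempotents $e\in\nMat{K}{n}$, $e'\in\nMat{K}{m}$, one lifts $\phi$ to a linear map $K^n\to K^m$ compatible with the idempotents and obtains the induced $K$-morphism $\underline{P}\to\underline{P'}$; functoriality follows because composition of lifts is a lift of the composition.

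The second task is to check that the functor of points of $\underline{P}$ is isomorphic to $L\mapsto P_L=P\otimes_K L$, which is statement (1). This is the heart of the matter: one needs that base change commutes with taking the image of an idempotent, i.e.\ $P\otimes_K L\cong\im(e_L)$ where $e_L\in\nMat{L}{n}$ is the image of $e$. This holds because $e$ is idempotent, so $K^n=\im e\oplus\ker e$ splits as a direct sum of projectives, and $-\otimes_K L$ preserves this splitting, giving $L^n=\im(e_L)\oplus\ker(e_L)$ with $\im(e_L)=P\otimes_K L$. On the other hand $\underline{P}(L)=\{v\in L^n: v(1-e_L)=0\}=\im(e_L)$ as well (a vector killed by $1-e_L$ is fixed by $e_L$, hence in the image; conversely $e_L$ acts as the identity on its image since $e_L$ is idempotent). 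These identifications are natural in $L$, so the functors agree; that the isomorphism is independent of the chosen presentation follows from the uniqueness clause of Proposition~\ref{PR:topologizing-schemes} together with Yoneda, or can be checked directly by comparing two presentations via a common refinement.

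The remaining items are then largely formal. For (4), taking $P=K^n$ with $e=1$ (the identity idempotent) gives $\underline{K^n}=\Spec K[x_1,\dots,x_n]=\mathbb{A}^n_K$ on the nose. For (3), if $P$ is cut out in $\mathbb{A}^n_K$ by the idempotent $e$ and $Q$ in $\mathbb{A}^m_K$ by $e'$, then $P\oplus Q$ is cut out in $\mathbb{A}^{n+m}_K$ by the block-diagonal idempotent $e\oplus e'$, and the subscheme $\{v(1-(e\oplus e'))=0\}$ of $\mathbb{A}^{n+m}_K=\mathbb{A}^n_K\times_{\Spec K}\mathbb{A}^m_K$ is visibly the product $\underline{P}\times_{\Spec K}\underline{Q}$; naturality of this identification is immediate from the construction. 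For (2), if $Q$ is a summand of $P$, choose a presentation of $P$ and a sub-idempotent $e_0\le e$ realizing $Q$ (here $e_0e=ee_0=e_0$); then $\underline{Q}\hookrightarrow\underline{P}$ is the inclusion of the closed subscheme defined by the additional linear equations $v(1-e_0)=0$, so it is a closed immersion.

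The step I expect to be the main obstacle is verifying the compatibility in (1) carefully enough to conclude well-definedness and functoriality simultaneously: one must ensure that different choices of idempotent presentations for the same module (and for composable morphisms) yield canonically isomorphic schemes, so that $\underline{P}$ is well-defined up to canonical isomorphism and the assignment on morphisms respects composition. Rather than a direct combinatorial comparison of presentations, the cleanest route is to note that $\underline{P}$ represents the functor $L\mapsto P\otimes_K L$, which is representable by an affine $K$-scheme (e.g.\ it is a closed subfunctor of $\mathbb{A}^n_K$ via any presentation, or one invokes the fact that the "vector-bundle scheme" associated to a finitely generated projective module is affine); by Yoneda this scheme is then determined up to unique isomorphism, and functoriality on morphisms is inherited from functoriality of $L\mapsto\Hom_K(P,P')\otimes_K L\cong\Hom_L(P_L,P'_L)$ (Lemma~\ref{LM:natural-isomorphism} with $A=K$). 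Everything else is bookkeeping.
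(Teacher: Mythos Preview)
Your argument is correct, but it takes a genuinely different route from the paper's. The paper constructs $\underline{P}$ intrinsically as $\Spec S(P^\vee)$, where $S(P^\vee)=\bigoplus_{n\ge 0}S^n P^\vee$ is the symmetric algebra on the dual module $P^\vee=\Hom_K(P,K)$; functoriality comes for free from the (contravariant) functoriality of $P\mapsto P^\vee$ and of the symmetric algebra, and no choices are ever made. By contrast, you fix a presentation of $P$ as the image of an idempotent $e\in\nMat{K}{n}$, cut out $\underline{P}$ inside $\mathbb{A}^n_K$ by the linear equations $v(1-e)=0$, and then invoke Yoneda to show the result is independent of the presentation. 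The paper's approach is cleaner for exactly the reason you identify as the main obstacle: because the symmetric-algebra construction is coordinate-free, there is nothing to check about compatibility of different presentations, and functoriality on morphisms is automatic rather than requiring a lift-and-compare argument. On the other hand, your approach has the virtue of making the closed immersion $\underline{P}\hookrightarrow\mathbb{A}^n_K$ explicit from the outset, which is precisely what is used downstream when topologizing $P$ via Proposition~\ref{PR:topologizing-schemes}; in the paper's setup one has to observe separately that $S(P^\vee)$ is a quotient of a polynomial ring to get this embedding.
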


    \begin{proof}
        This is well-known: Let $P^\vee:=\Hom_K(P,K)$,
        let $S^nP^\vee$ be the $n$-th symmetric (tensor) power of $P^\vee$,
        and let $SP^\vee=\bigoplus_{n=0}^\infty S^nP^\vee$ be
        the free symmetric algebra spanned by $P^\vee$.
        Then $\underline{P}:=\Spec SP^\vee$ fulfills all the requirements.
        If $\phi:Q\to P$ is a $K$-linear homomorphism, then
        the corresponding morphism $\underline{Q}\to\underline{P}$
        comes from the induced map $\bigoplus_{n\geq0}S^n\phi^\vee: SP^\vee\to SQ^\vee$.
        The  details are left as an exercise to the reader; see \cite[Ex.~20.2(2)]{InvBook}
        for the case where $K$ is a field.
    \end{proof}

    Suppose henceforth that $K$ is a Hausdorff topological commutative ring such that
    $\units{K}$ is open in $K$ and the \emph{inversion map} $k\mapsto k^{-1}:\units{K}\to \units{K}$
    is continuous.
    By Proposition~\ref{PR:realizing-projective-mods}, we can realize all
    finitely generated projective $K$-modules as the $K$-points of affine
    schemes over $K$, and topologize them using Proposition~\ref{PR:topologizing-schemes}.
    For every $P\in\rproj{K}$, denote by $\tau_P$ the  topology obtained in this way.
    It is immediate to check that this topology has the following properties:
    \begin{enumerate}
        \item[(1)] All $K$-linear homomorphisms are continuous.
        \item[(2)] For all $P,Q\in\rproj{K}$, the topology $\tau_{P\times Q}$ coincides
        with the product topology on $P\times Q$.
        \item[(3)] $\tau_K$ is  the topology of $K$ as a ring.
    \end{enumerate}
    As a result, if $P\in\rproj{K}$
    is isomorphic to a summand of $K^n$, then $\tau_P$ coincides with the subspace topology
    induced from the inclusion $P\to K^n$.

    \begin{prp}\label{PR:multilinear-is-continuous}
        Let $P_1,\dots,P_t,Q\in\rproj{K}$. Then any $K$-multilinear map
        $\mu:P_1\times\dots\times P_t\to Q$ is continuous.
    \end{prp}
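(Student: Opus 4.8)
The plan is to reduce the multilinear case to the linear case, which is already known to be continuous by property~(1) of the topology $\tau_P$. The key observation is that a $K$-multilinear map $\mu : P_1 \times \dots \times P_t \to Q$ factors through the tensor product: there is a $K$-linear map $\bar\mu : P_1 \otimes_K \dots \otimes_K P_t \to Q$ with $\mu = \bar\mu \circ \theta$, where $\theta : P_1 \times \dots \times P_t \to P_1 \otimes_K \dots \otimes_K P_t$ is the canonical multilinear map. Since $P_1 \otimes_K \dots \otimes_K P_t$ lies in $\rproj{K}$ (a finite tensor product of finitely generated projective modules is again finitely generated projective), $\bar\mu$ is continuous by property~(1). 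So it suffices to show that $\theta$ itself is continuous.

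To handle $\theta$, I would first reduce to the free case. Choose $P_i' \in \rproj{K}$ with $P_i \oplus P_i' \cong K^{n_i}$ free. Then each $P_i$ carries the subspace topology from $K^{n_i}$, and $P_1 \otimes \dots \otimes P_t$ is a direct summand of $K^{n_1} \otimes \dots \otimes K^{n_t} \cong K^{n_1 \cdots n_t}$, hence carries the subspace topology from the latter; moreover the canonical map $\theta$ for the $P_i$ is the restriction of the canonical map for the free modules. Thus it is enough to prove that the canonical multilinear map $K^{n_1} \times \dots \times K^{n_t} \to K^{n_1 \cdots n_t}$ is continuous. But in coordinates this map is given by the tuple of monomials $(x^{(1)}_{j_1} x^{(2)}_{j_2} \cdots x^{(t)}_{j_t})_{j_1,\dots,j_t}$, so each component is a product of $t$ coordinate projections. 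Since $K$ is a topological ring, multiplication $K \times K \to K$ is continuous, hence so is any finite product map $K^t \to K$; combined with the fact that a map into a product (property~(2) of $\tau$, iterated) is continuous iff each component is, we conclude $\theta$ is continuous, and the proof is complete.

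The only mild subtlety — which I do not expect to be a real obstacle — is the bookkeeping needed to see that the topology on $P_1 \otimes \dots \otimes P_t$ obtained by realizing it as the $K$-points of $\underline{P_1 \otimes \dots \otimes P_t}$ agrees with the subspace topology coming from the embedding into a free module; this is exactly the content of the remark preceding the proposition (that $\tau_P$ is the subspace topology whenever $P$ is a summand of $K^n$), together with compatibility of $\theta$ with direct sums, so no new work is required. One could alternatively bypass tensor products entirely and argue directly: fixing all but one argument, $\mu$ is $K$-linear hence continuous in that argument, and then invoke continuity of finite sums and products in $K$ together with the coordinate description of $\mu$ on free modules — but the tensor-product reduction is cleaner and makes the naturality transparent.
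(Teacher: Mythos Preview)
Your proof is correct, but it takes a different route from the paper's. The paper argues at the level of schemes: it shows that $\mu$ is induced by a morphism of affine $K$-schemes $\underline{P_1}\times_{\Spec K}\dots\times_{\Spec K}\underline{P_t}\to\underline{Q}$ by using Yoneda's Lemma to exhibit a natural transformation $\mu_L:(P_1)_L\times\dots\times(P_t)_L\to Q_L$ (given by $\mu_L(p_1\otimes\ell_1,\dots,p_t\otimes\ell_t)=\mu(p_1,\dots,p_t)\otimes\prod_i\ell_i$) between the functors of points, and then continuity on $K$-points follows immediately from Proposition~\ref{PR:topologizing-schemes}. Your argument instead stays on the topological side: you factor $\mu$ through the tensor product, use property~(1) to dispose of the linear part, and then reduce the continuity of the universal multilinear map $\theta$ to an explicit coordinate computation in the free case. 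Your approach is more elementary and requires only the derived properties (1)--(3) of $\tau_P$ rather than the scheme formalism behind them; the paper's approach is shorter once that formalism is in place and yields the slightly stronger statement that $\mu$ actually comes from a morphism of schemes, which is what one would want if $\mu$ had to be composed with other scheme morphisms later.
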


    \begin{proof}
        It is enough to show that $\mu$ is induced by a $K$-morphism
        $\underline{\mu}:\underline{P_1}\times\dots\times\underline{P_t}\to\underline{Q}$.
        By Yoneda's Lemma, this holds if $\mu$ extends to a natural transformation
        from the points functor of $\underline{P_1}\times\dots\times\underline{P_t}$
        to the points functor of $\underline{Q}$.
        Namely, for all $L\in\Comm{K}$, there is a map $\mu_L:(P_1)_L\times\dots\times (P_t)_L\to Q_L$
        such that $\mu_K=\mu$ and $\{\mu_L\}_{L\in\Comm{K}}$ is a natural transformation
        from $[L\mapsto (\underline{P_1}\times\dots\times\underline{P_t})(L)=(P_1)_L\times\dots\times (P_t)_L]$
        to $[L\mapsto \underline{Q}(L)=Q_L]$.
        Indeed, it is easy  to check that the map $\mu_L(p_1\otimes \ell_1,\dots,p_t\otimes \ell_t)=\mu(p_1,\dots,p_t)\otimes \prod_i\ell_i$
        (with $p_1\in P_1$,\dots, $p_t\in P_t$, $\ell_1,\dots,\ell_t\in L$) fulfills these requirements.
    \end{proof}

    \begin{prp}\label{PR:top-K-mod}
        Every $P\in\rproj{K}$ is a topological $K$-module.
    \end{prp}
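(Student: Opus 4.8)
The statement to prove is that every $P\in\rproj{K}$ is a topological $K$-module; that is, the addition map $+:P\times P\to P$ and the scalar multiplication $\cdot:P\times K\to P$ are continuous when $P$ carries the topology $\tau_P$ and $K$ carries its ring topology (and products carry the product topology). The plan is to observe that both of these maps are instances of maps already handled: addition $P\times P\to P$ is $K$-linear (indeed $K$-bilinear in the trivial sense, being $(p,q)\mapsto p+q$), and scalar multiplication $P\times K\to P$ is $K$-bilinear. Both are therefore continuous, the first by property~(1) of the topology $\tau_P$ listed immediately before Proposition~\ref{PR:multilinear-is-continuous} (all $K$-linear homomorphisms are continuous, together with property~(2) identifying $\tau_{P\times P}$ with the product topology), and the second directly by Proposition~\ref{PR:multilinear-is-continuous} applied with $t=2$, $P_1=P$, $P_2=K$, $Q=P$, again using that $\tau_{P\times K}$ is the product topology of $\tau_P$ and $\tau_K=$ the topology of $K$.

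Concretely, I would write: by Proposition~\ref{PR:multilinear-is-continuous}, the $K$-bilinear map $P\times K\to P$, $(x,k)\mapsto xk$, is continuous; since $\tau_{P\times K}$ is the product topology (property~(2) of $\tau$) and $\tau_K$ is the ring topology of $K$ (property~(3)), this says exactly that scalar multiplication is continuous as a map from the topological product $P\times K$ to $P$. For addition, I would note that $\alpha:P\times P\to P$ given by $(x,y)\mapsto x+y$ is $K$-linear; by property~(1) it is continuous, and since $\tau_{P\times P}$ is the product topology by property~(2), this is the continuity of addition. Finally, the constant map and the negation map $x\mapsto -x$ (also $K$-linear, hence continuous) being continuous, $P$ together with $\tau_P$ is a topological abelian group on which $K$ acts continuously, i.e.\ a topological $K$-module. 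One might also remark that $\tau_P$ is Hausdorff, inherited from the Hausdorffness of $K$ via the realization of $P$ as a summand of some $K^n$ and the fact that closed immersions restrict to closed embeddings (Proposition~\ref{PR:topologizing-schemes}(2)); but this is not strictly needed for the module-topology assertion unless "topological module" is taken to include Hausdorffness.

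There is essentially no obstacle here: the proposition is a formal consequence of Proposition~\ref{PR:multilinear-is-continuous} and the three bulleted properties of $\tau_P$. The only point requiring a word of care is matching up the various topologies on products — one must invoke property~(2) to know that the abstractly defined $\tau_{P\times P}$ (coming from realizing $P\oplus P$ as a scheme) agrees with the product topology, so that "continuous on $P\times P$" means what one wants; this is exactly why property~(2) was recorded. Thus the proof is a two- or three-line application of the preceding results.
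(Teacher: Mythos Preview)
Your proposal is correct and matches the paper's own proof essentially verbatim: addition (and subtraction) are continuous because they are $K$-linear, and scalar multiplication is continuous by Proposition~\ref{PR:multilinear-is-continuous} since it is $K$-bilinear. The extra remarks you make about matching product topologies via property~(2) are the implicit justifications the paper leaves unsaid.
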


    \begin{proof}
        The addition and subtraction maps from $P\times P$ to $P$ are continuous
        because they are $K$-linear. The map $(p,k)\mapsto pk:P\times K\to P$
        is continuous because it is $K$-multilinear (Proposition~\ref{PR:multilinear-is-continuous}).
    \end{proof}

    \begin{prp}\label{PR:topological-algebra}
        Let $A$ be a $K$-algebra such that $A\in\rproj{K}$. Then:
        \begin{enumerate}
            \item[{\rm(i)}] $A$ is a topological $K$-algebra (when topologized as a f.g.\ projective $K$-module).
            \item[{\rm(ii)}] $\units{A}$ is open in $A$ and the map $a\mapsto a^{-1}:\units{A}\to\units{A}$
            is continuous.
        \end{enumerate}
    \end{prp}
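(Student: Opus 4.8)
The plan is to handle (i) directly and reduce (ii) to the case of matrix algebras. For (i): $A$ is a topological $K$-module by Proposition~\ref{PR:top-K-mod}, and its multiplication $A\times A\to A$ is $K$-bilinear, hence continuous by Proposition~\ref{PR:multilinear-is-continuous}; thus $A$ is a topological $K$-algebra.

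For (ii) I would first treat $A=\nMat{K}{n}$, whose topology is the product topology on $K^{n^2}$ by the properties of $\tau_P$ recorded above. Here $\units{(\nMat{K}{n})}=\GL_n(K)$, and over the commutative ring $K$ a matrix $M$ is invertible precisely when $\det M\in\units{K}$, with $M^{-1}=(\det M)^{-1}\mathrm{adj}(M)$. The map $\det\colon\nMat{K}{n}\to K$ is polynomial in the entries, hence continuous (the entry maps are continuous, and addition and multiplication in $K$ are continuous), so $\GL_n(K)=\det^{-1}(\units{K})$ is open because $\units{K}$ is open in $K$. Continuity of $M\mapsto M^{-1}$ on $\GL_n(K)$ then follows since $\mathrm{adj}$ has polynomial (hence continuous) entries, $k\mapsto k^{-1}$ is continuous on $\units{K}$ by hypothesis, and scalar multiplication $\units{K}\times\nMat{K}{n}\to\nMat{K}{n}$ is continuous.

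To pass to a general $A$, fix $A'\in\rproj{K}$ with $A\oplus A'\cong K^n$, write $e'\in\End_K(K^n)\cong\nMat{K}{n}$ for the projector onto $A'$ along $A$, and define $\lambda\colon A\to\nMat{K}{n}$ by letting $\lambda(a)$ act as left multiplication by $a$ on the summand $A$ and as the identity on $A'$. Then $\lambda$ is a unital injective ring homomorphism; it is not $K$-linear, but $\lambda(a)=\lambda_0(a)+e'$ where $a\mapsto\lambda_0(a)$ (left multiplication extended by $0$) is $K$-linear, so $\lambda$ is continuous. A short argument gives $a\in\units{A}\iff\lambda(a)\in\GL_n(K)$: if $\lambda(a)$ is bijective it preserves the two summands, so left multiplication by $a$ on $A$ is bijective, whence $ab=1$ for some $b$ and then $ba=1$ because $a(ba-1)=0$. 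Hence $\units{A}=\lambda^{-1}(\GL_n(K))$ is open. For inversion, let $\epsilon\colon\nMat{K}{n}\to A$ be the $K$-linear (hence continuous) map sending $\phi$ to the $A$-component of $\phi(1_A)$; then $\epsilon\circ\lambda_0=\id_A$, so $\epsilon(\lambda(a^{-1}))=a^{-1}+\epsilon(e')$, and therefore $a\mapsto a^{-1}=\epsilon\bigl(\lambda(a)^{-1}\bigr)-\epsilon(e')$ is a composite of continuous maps on $\units{A}$.

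The one point requiring care is exactly this last reduction: the representation must be arranged so that it carries units to units and so that $a^{-1}$ can be extracted from $\lambda(a)^{-1}$ by a continuous formula, which is what forces the ``extend by the identity on $A'$'' device and the constant correction $\epsilon(e')$. The remaining verifications are routine given Propositions~\ref{PR:top-K-mod} and~\ref{PR:multilinear-is-continuous} and the standing assumptions on $K$.
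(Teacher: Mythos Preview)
Your argument is correct and follows essentially the same strategy as the paper: reduce to $\nMat{K}{n}$ via the determinant/adjugate, then embed a general $A$ using left multiplication extended by the identity on a complement. The paper factors this embedding through an intermediate step $A\hookrightarrow\End_K(A)\hookrightarrow\End_K(A\oplus P)\cong\nMat{K}{n}$, whereas you go directly $A\to\End_K(A\oplus A')$; also note that your correction term $\epsilon(e')$ is in fact $0$ since $e'(1_A)=0$, so the formula simplifies to $a^{-1}=\epsilon(\lambda(a)^{-1})$.
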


    \begin{proof}
        (i)
        The algebra $A$ is a topological $K$-module by Proposition~\ref{PR:top-K-mod} and
        the multiplication in $A$ is continuous by
        Proposition~\ref{PR:multilinear-is-continuous}.

        (ii)
        If $A\cong \nMat{K}{n}$ as $K$-algebras, then this follows  from
        the fact that the determinant map is continuous, and the assumptions that
        $\units{K}$ is open in $K$ and the map $a\mapsto a^{-1}:\units{K}\to \units{K}$
        is continuous.

        For general $A$, choose $P\in\rproj{K}$ such that $A\oplus P$ is free and
        let $E=\End_K(A\oplus P)$ and $B=\End_K(A)$. Then $E\cong \nMat{K}{n}$ for some $n$,
        and hence $\units{E}$ is open in $E$ and the inversion map $i:\units{E}\to \units{E}$
        is continuous. Define $f:B\to E$ by $f(\phi)=(\phi\oplus 0_P)+(0_A\oplus\id_P)$
        and $g:E\to B$ by letting $g(\phi)$ be the unique element
        of $B$ satisfying $g(\phi)\oplus 0_P=(\id_A\oplus 0_P) \phi(\id_A\oplus 0_P)$.
        Then $f$ and $g$ are continuous. Since $\units{B}=f^{-1}(\units{E})$,
        $\units{B}$ is open in $B$, and since $g\circ i\circ f|_{\units{B}}$
        is the inversion map of $B$, the map $b\mapsto b^{-1}:\units{B}\to \units{B}$
        is continuous.

        Now define $t:A\to B=\End_K(A)$ by $t(a)=[x\mapsto ax]$
        and $s:B\to A$ by $s(b)=b(1_A)$. It is easy to check that $t$ is a homomorphism
        of $K$-algebras whose image is $\End_A(A_A)$, and $s\circ t=\id_A$.
        Thus, $A$ is a summand of $B$ via $t$,
        and hence $t$ is a closed embedding. Since $\End_A(A_A)\cap \units{B}=\units{\End_A(A_A)}$,
        we have
        $t^{-1}(\units{B})=\units{A}$. It follows that $\units{A}$ is open in $A$
        and $a\mapsto a^{-1}:\units{A}\to \units{A}$ is continuous.
    \end{proof}

    \begin{prp}\label{PR:point-wise-topology}
        Let $P\in\rproj{K}$. Then the topology of $\End_K(P)$
        coincides with the topology
        induced from the product topology on $P^P=\End_{\mathrm{Set}}(P)$.
    \end{prp}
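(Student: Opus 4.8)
The plan is to show that the two topologies on $\End_K(P)$ coincide by proving that each refines the other. Write $\tau$ for the topology carried by $\End_K(P)$ as a finitely generated projective $K$-module (note $\End_K(P)$ is a direct summand of $\End_K(K^n)\cong\nMat{K}{n}\cong K^{n^2}$ whenever $P$ is a summand of $K^n$, so $\tau$ is defined), and write $\rho$ for the subspace topology induced from the product topology on $P^P=\End_{\mathrm{Set}}(P)$. For one inclusion, observe that for each $p\in P$ the evaluation $\mathrm{ev}_p\colon\End_K(P)\to P$, $\psi\mapsto\psi(p)$, is $K$-linear and therefore $\tau$-continuous; hence the inclusion $(\End_K(P),\tau)\hookrightarrow P^P$ is continuous, i.e.\ $\rho\subseteq\tau$. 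It remains to prove the reverse inclusion $\tau\subseteq\rho$.

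I would first treat the free case $P=K^n$, where $\End_K(K^n)=\nMat{K}{n}$ and $\tau$ is the product topology on $K^{n^2}$. Sending a matrix to the tuple of its $n$ columns identifies $(\nMat{K}{n},\tau)$ homeomorphically with $(K^n)^n$ under the product topology; but this same map, regarded on $(\End_K(K^n),\rho)$, is merely the restriction of functions $K^n\to K^n$ to the standard basis vectors, hence $\rho$-continuous. Composing with the inverse homeomorphism gives $\tau\subseteq\rho$, so $\tau=\rho$ on $\End_K(K^n)$.

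For a general $P$, choose an inclusion $\iota\colon P\to K^n$ and a retraction $\pi\colon K^n\to P$ with $\pi\iota=\id_P$, and set $\Phi\colon\End_K(P)\to\End_K(K^n)$, $\Phi(\psi)=\iota\psi\pi$, and $\Psi\colon\End_K(K^n)\to\End_K(P)$, $\Psi(\phi)=\pi\phi\iota$. Both are $K$-linear, hence $\tau$-continuous, and $\Psi\Phi=\id_{\End_K(P)}$. Moreover $\Phi$ is continuous for the $\rho$-topologies: for $v\in K^n$ one computes $\mathrm{ev}_v\circ\Phi=\iota\circ\mathrm{ev}_{\pi v}$, a composite of the $\rho$-continuous evaluation $\mathrm{ev}_{\pi v}\colon(\End_K(P),\rho)\to P$ with the continuous linear map $\iota$; as this holds for every $v$, the universal property of the product makes $\Phi$ continuous into $\End_{\mathrm{Set}}(K^n)$, hence into $(\End_K(K^n),\rho)$, which by the free case equals $(\End_K(K^n),\tau)$. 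Now let $U\subseteq\End_K(P)$ be $\tau$-open; then $\Psi^{-1}(U)$ is $\tau$-open, and $U=(\Psi\Phi)^{-1}(U)=\Phi^{-1}\bigl(\Psi^{-1}(U)\bigr)$ is $\rho$-open because $\Phi\colon(\End_K(P),\rho)\to(\End_K(K^n),\tau)$ is continuous. Hence $\tau\subseteq\rho$, and the two topologies agree. There is no essential difficulty here; the only thing requiring care is bookkeeping the topologies $\tau$ and $\rho$ on each of $\End_K(P)$ and $\End_K(K^n)$ and which inclusion each map witnesses, and the single load-bearing point is that the split $\Psi\Phi=\id_{\End_K(P)}$, with $\Phi,\Psi$ continuous for the module topologies, propagates the equality of topologies from a free module to its direct summand, so that no embedding property of $\Phi$ is even needed.
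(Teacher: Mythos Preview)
Your proof is correct. Both inclusions are established cleanly: $\rho\subseteq\tau$ via linearity of the evaluations $\mathrm{ev}_p$, and $\tau\subseteq\rho$ by first settling the free case and then transferring along the split pair $\Psi\Phi=\id_{\End_K(P)}$, using that $\Phi$ is $\rho$-to-$\rho$ continuous (your computation $\mathrm{ev}_v\circ\Phi=\iota\circ\mathrm{ev}_{\pi v}$ is exactly what is needed here).

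The paper organises the argument differently. Rather than reducing to the free case, it fixes a dual basis $\{p_i,\psi_i\}_{i=1}^n$ of $P$ and realises $\End_K(P)$ directly as a summand of $P^n$ via $\Phi f=(fp_1,\dots,fp_n)$. The inclusion you call $\tau\subseteq\rho$ then falls out immediately, since $\Phi$ factors through the inclusion $\End_K(P)\hookrightarrow P^P$ followed by the continuous projection $P^P\to P^n$. For the other inclusion, the paper does \emph{not} invoke linearity of evaluations; instead, given a basic $\rho$-neighbourhood determined by finitely many points $p_{n+1},\dots,p_m\in P$, it enlarges the dual basis by adjoining these points with $\psi_i=0$ and rereads the neighbourhood as a $\tau$-open set for the enlarged $\Phi$. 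So the two proofs swap which direction is ``easy'': the paper gets your hard direction for free from the dual-basis embedding, while your linearity-of-evaluations observation dispatches the direction the paper treats by the dual-basis extension trick. Your reduction to the free case is arguably more conceptual and avoids the ad hoc enlargement of the dual basis; the paper's approach is more self-contained in that it never leaves $P$ for an ambient $K^n$.
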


    \begin{proof}
        Recall that a (finite) dual basis for a module $P$ consists of a finite collection $\{p_i,\psi_i\}_{i\in I}$
        such that $p_i\in P$, $\psi_i\in \dual{P}$ and $x=\sum_ip_i(\psi_ix)$ for all $x\in P$.
        A module is finitely generated projective if and only if it admits  a  dual basis
        (\cite[Lm.~2.9, Rm.~2.11]{La99}).

        Let
        $\{p_i,\psi_i\}_{i=1}^n$ be a dual basis for $P$ and write $E=\End_K(P)$. Define $\Phi:E\to P^n$
        and $\Psi:P^n\to E$ by
        $\Phi f= (fp_1,\dots,fp_n)$ and $(\Psi (x_1,\dots,x_n))x=\sum_i x_i(\psi_i x)$ (for all
        $x,x_1,\dots,x_n\in P$). It is straightforward to check that $\Psi\Phi=\id$, hence $E$ is a summand
        of $P^n$. Thus, $\tau_E$
        is induced from  $\tau_{P^n}$ via the embedding $\Phi$.

        Let $\tau$ denote the topology induced on $E$ from the embedding $E\to P^P$ (where $P^P$ is given the product topology).
        Since $\Phi$ factors through the embedding $E\to P^P$,
        and the factor map $P^P\to P^{n}$ is continuous, we have $\tau_{E}\subseteq \tau$.
        To see the converse, let $U$ be a $\tau$-neighborhood of some $f\in E$. Then there exists a neighborhood $V$
        of $f$ in $P^P$ whose inverse image in $E$ is $U$.
        By the definition of the product topology, there exist $p_{n+1},\dots,p_m\in P$
        and open sets $\{U_i\}_{i=n+1}^m\subseteq \tau_P$
        such that $fp_i\in U_i$ for all $n<i\leq m$,
        and $V\supseteq \prod_{i=n+1}^m U_{i}\times P^{P\setminus \{p_{n+1},\dots,p_m\}}$. Define
        $\psi_i=0\in \dual{P}$ for all $n<i\leq m$. Then
        $\{p_i,\psi_i\}_{i=1}^m$ is also a dual basis of $P$. Replacing
        $\{p_i,\psi_i\}_{i=1}^n$ (which was arbitrary) with $\{p_i,\psi_i\}_{i=1}^m$,
        we see that
        \[f\in\Phi^{-1}(\prod_{1\leq i\leq n}P\times \prod_{n<i\leq m} U_i)\subseteq U \ .\]
        It follows that every $\tau$-neighborhood of $f$ contains
        a $\tau_E$-neighborhood of $f$, so $\tau\subseteq\tau_E$.
    \end{proof}

    Suppose now that $F$ is  a subring of $K$.
    For any $Q\in\rproj{F}$, let $Q_K:=Q\otimes_F K\in \rproj{K}$. Similar notation
    will be applied to $F$-algebras and $F$-homomorphisms. We view $Q$ as  an $F$-submodule
    of $Q_K$ by identifying $x\in Q$ with $x\otimes 1_K$ (the map $x\mapsto x\otimes 1_K$ is injective
    for free modules, and hence for all projective modules).

    \begin{prp}\label{PR:top-scalar-ext}
        Let $Q\in\rproj{F}$.
        If $F$ is dense in $K$, then $Q$ is dense in $Q_K$.
    \end{prp}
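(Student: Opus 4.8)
The plan is to reduce the claim, via a choice of dual basis, to the case of free modules, where density of $F$ in $K$ passes through tensor products in an elementary way. First I would invoke the fact (\cite[Lm.~2.9, Rm.~2.11]{La99}, already used in the proof of Proposition~\ref{PR:point-wise-topology}) that $Q\in\rproj{F}$ admits a finite dual basis $\{q_i,\psi_i\}_{i=1}^n$, equivalently that $Q$ is a direct summand of a finitely generated free module $F^n$. Write $F^n = Q\oplus Q'$ and correspondingly $K^n = Q_K\oplus Q'_K$, since scalar extension preserves direct sums; the inclusion $Q\hookrightarrow Q_K$ is the restriction of $F^n\hookrightarrow K^n$, and by property (2) of the topology on $K$-points (closed immersions become closed embeddings, Proposition~\ref{PR:topologizing-schemes}(2), or equivalently the remark after Proposition~\ref{PR:top-K-mod}) the topology $\tau_{Q_K}$ is the subspace topology induced from $\tau_{K^n}$, which by property (2) of that topology is the product topology on $K^n$. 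Likewise $\tau_{Q_K}$ restricted to the image of $Q$ is the subspace topology from $Q$'s embedding in $F^n$.

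Next I would settle the free case directly: $F^n$ is dense in $K^n$ because $F$ is dense in $K$ and a finite product of dense subsets is dense in the product topology. Then, given $x\in Q_K$ and a basic open neighbourhood $U$ of $x$ in $K^n$, density of $F^n$ gives a point $y\in F^n\cap U$; applying the (continuous, $F$-linear, hence continuous after scalar extension) projection $\pi\colon K^n\to Q_K$ along $Q'_K$, and noting $\pi$ restricts to the identity on $Q_K$ and maps $F^n$ into $Q$, we get $\pi(y)\in Q$. Since $\pi(x)=x$ and $\pi$ is continuous, $\pi(y)$ lies in $Q\cap \pi(U)$, and one checks $\pi(U)$ is a neighbourhood of $x$ in $Q_K$ (using that $\pi$ is a retraction onto a closed summand, so it is an open map onto $Q_K$, or more simply by intersecting with a neighbourhood basis of $x$ in $Q_K$ coming from $K^n$). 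Hence every neighbourhood of every point of $Q_K$ meets $Q$, i.e.\ $Q$ is dense in $Q_K$.

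The only mildly delicate point is the interplay of the two topologies: one must be sure that the subspace topology on $Q$ induced from $\tau_{Q_K}$ agrees with $\tau_Q$ (so that ``dense'' is unambiguous), and that projecting along $Q'_K$ does not shrink neighbourhoods unexpectedly. Both follow formally from Proposition~\ref{PR:realizing-projective-mods} and Proposition~\ref{PR:topologizing-schemes}: the schemes $\underline{Q}$, $\underline{Q_K}$ fit into the commutative square coming from $F^n = Q\oplus Q'$, all arrows are (closed) immersions or split projections, and the functoriality plus compatibility with products pins down every topology as the evident subspace/product topology on $K^n$. I expect this bookkeeping to be the main obstacle, but it is routine once the summand picture is set up; the genuinely substantive input — finite products of dense sets are dense — is trivial. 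Alternatively, and perhaps more cleanly, one can argue entirely functorially: the map $Q\otimes_F K \to \varprojlim$ is not needed; instead note $Q$ dense in $Q_K$ is equivalent to: for the affine scheme $\underline{Q}$ over $F$, the image of $\underline{Q}(F)\to \underline{Q}(K)$ is dense, and since $\underline{Q}$ is (Zariski-locally on a cover, but here globally after adding a summand) an affine space, this is immediate from $F$ dense in $K$.
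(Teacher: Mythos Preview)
Your argument is correct, but it takes a different and more circuitous route than the paper. The paper's proof is a direct three-line computation using only that $Q_K$ is a topological $K$-module (Proposition~\ref{PR:top-K-mod}): given $x\in U\subseteq Q_K$ open, write $x=\sum_i x_ib_i$ with $x_i\in Q$ and $b_i\in K$; continuity of scalar multiplication gives neighbourhoods $U_i\ni b_i$ in $K$ with $\sum_i x_iU_i\subseteq U$; then density of $F$ in $K$ supplies $a_i\in U_i\cap F$, and $\sum_i x_ia_i\in Q\cap U$. No summand embedding, no projection, no scheme-theoretic bookkeeping.

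Your approach---embedding $Q$ as a summand of $F^n$, using density of $F^n$ in $K^n$, and projecting back---works, but the ``mildly delicate point'' you flag is self-inflicted. You start from a neighbourhood $U$ of $x$ in $K^n$ and then need $\pi(U)$ to be a neighbourhood in $Q_K$, which forces you to argue that $\pi$ is open. The clean way (which you gesture at) is the reverse: start from a neighbourhood $V$ of $x$ in $Q_K$, pull it back along the continuous map $\pi$ to an open $\pi^{-1}(V)\ni x$ in $K^n$, pick $y\in F^n\cap\pi^{-1}(V)$ by density, and conclude $\pi(y)\in Q\cap V$. This uses only continuity of $\pi$, not openness. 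Either way, your method buys you nothing over the paper's argument except extra bookkeeping; the paper's approach exploits the tensor-product description $Q_K=Q\otimes_F K$ directly and avoids choosing a complement $Q'$ altogether.
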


    \begin{proof}
        Let $U$ be an open subset of $Q_K$ and let $x\in U$.
        Then $x=\sum_ix_ib_i$ for some $b_1,\dots,b_n\in K$  and $x_1,\dots,x_n\in Q$. Since $Q_K$ is a topological
        $K$-module, there are neighborhoods $b_i\in U_i\in\tau_K$ ($1\leq i\leq n$) such that $\sum_i x_iU_i\subseteq U$.
        Since $F$ is dense in $K$, there are $a_i \in U_i\cap F$ ($1\leq i\leq n$). Then $\sum_i x_ia_i\in U\cap Q$.
    \end{proof}

    \begin{prp}\label{PR:density-of-units}
        Assume that $\units{F}=F\cap \units{K}$ and
        let $A$ be an $F$-algebra such that $A\in\rproj{F}$. Then $\units{A}=A\cap\units{A_K}$.
        Furthermore,
        if  $F$ is dense in $K$, then $\units{A}$ is dense in $\units{A_K}$.
    \end{prp}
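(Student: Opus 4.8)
The plan is to prove the two assertions separately, reducing both to the ring-theoretic case $A\in\rproj F$ that was settled in Proposition~\ref{PR:topological-algebra} and the density statement Proposition~\ref{PR:top-scalar-ext}. First I would establish the equality $\units{A}=A\cap\units{A_K}$. The inclusion $\units A\subseteq A\cap\units{A_K}$ is immediate, since scalar extension of a two-sided inverse gives a two-sided inverse. For the reverse inclusion, I would pass to the regular representation: let $B=\End_F(A)$ and $t\colon A\to B$, $t(a)=[x\mapsto ax]$, the injective $F$-algebra homomorphism used in the proof of Proposition~\ref{PR:topological-algebra}, with $s\colon B\to A$, $s(b)=b(1_A)$, so that $s\circ t=\id_A$. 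Scalar extension is compatible with all of this (Lemma~\ref{LM:natural-isomorphism}), so $t_K\colon A_K\to B_K=\End_K(A_K)$ is the regular representation of $A_K$, and one has $\im(t)=\End_A(A_A)=\{b\in B: b\ \text{is right}\ A\text{-linear}\}$, respectively $\im(t_K)=\End_{A_K}((A_K)_{A_K})$. Now an element $a\in A\cap\units{A_K}$ has $t_K(a)\in\units{B_K}$; the inverse of a matrix lies in the subfield generated by its entries, hence $t_K(a)^{-1}$ is a polynomial in $t_K(a)$ over $F$ (or: it commutes with everything commuting with $t_K(a)$, so it is right $A$-linear), whence $t_K(a)^{-1}\in\im(t)$; thus $t(a)^{-1}$ exists in $B$ and lies in $\im(t)$, giving $a\in\units A$. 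Here the hypothesis $\units F=F\cap\units K$ is what lets us descend: applied to the split algebra $B\cong\nMat Kn$, it gives $\units{\nMat Fn}=\nMat Fn\cap\units{\nMat Kn}$ via the determinant, which is the content just cited for $B$, and then the summand argument as in Proposition~\ref{PR:topological-algebra}(ii) transfers it to $A$.

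For the density statement, suppose $F$ is dense in $K$ and fix $a\in\units{A_K}$; I must approximate it by units of $A$. By Proposition~\ref{PR:topological-algebra}(ii) applied to the $K$-algebra $A_K$ (which lies in $\rproj K$ since $A\in\rproj F$), the set $\units{A_K}$ is open in $A_K$ and inversion is continuous there. By Proposition~\ref{PR:top-scalar-ext}, $A$ is dense in $A_K$, so there is a net (or sequence, if one prefers) in $A$ converging to $a$; intersecting with the open set $\units{A_K}$, we may take this net to lie in $A\cap\units{A_K}$, which by the first part equals $\units A$. Hence $a$ is a limit of elements of $\units A$, i.e.\ $\units A$ is dense in $\units{A_K}$.

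The only genuinely delicate point is the descent step $t_K(a)^{-1}\in\im(t)$ in the first part: one must argue that the inverse of a unit of $A_K$ that happens to lie in the subring $A$ is itself forced back into (the image of) $A$. The cleanest route is the commutant description, $\im(t_K)$ is exactly the set of elements of $B_K$ commuting with the left-multiplication-by-$A$ operators, together with the fact that $t_K(a)^{-1}$ commutes with every element commuting with $t_K(a)$; I expect this to go through with no trouble, but it is the place where one actually uses that $t$ identifies $A$ with a \emph{subalgebra} (not merely a submodule) of $B$. Everything else is a routine assembly of Propositions~\ref{PR:top-scalar-ext} and~\ref{PR:topological-algebra}.
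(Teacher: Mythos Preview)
Your density argument (second paragraph) is correct and matches the paper exactly: $A$ is dense in $A_K$ by Proposition~\ref{PR:top-scalar-ext}, $\units{A_K}$ is open by Proposition~\ref{PR:topological-algebra}(ii), so $A\cap\units{A_K}=\units{A}$ is dense in $\units{A_K}$.

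The first part, however, has a genuine gap at exactly the point you flag as ``delicate''. Your commutant argument shows only that $t_K(a)^{-1}$ is right $A_K$-linear, i.e.\ $t_K(a)^{-1}\in\im(t_K)$ --- but this is trivially true, since $a^{-1}\in A_K$ by hypothesis and $t_K(a)^{-1}=t_K(a^{-1})$. It does \emph{not} show $t_K(a)^{-1}\in B=\End_F(A)$, let alone $\im(t)$. Likewise, the claim that ``the inverse of a matrix lies in the subfield generated by its entries'' presupposes that $F$ is a field; over a general ring the Cayley--Hamilton expression for the inverse involves $\det^{-1}$, so asserting that $t_K(a)^{-1}$ is a polynomial in $t_K(a)$ with coefficients in $F$ is exactly the statement $\det\in\units F$ that you are trying to prove. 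Neither route descends from $B_K$ to $B$ without the determinant input.

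The paper's argument, which you gesture at but do not execute, is to complement $A$ to a free module $A\oplus Q\cong F^n$ and look at $t(a)\oplus\id_Q\in\nMat{F}{n}$. Its determinant lies in $F$; since $t(a)_K\oplus\id_{Q_K}$ is invertible over $K$, the determinant also lies in $\units K$; hence it lies in $F\cap\units K=\units F$, so $t(a)\oplus\id_Q$ is invertible over $F$, and therefore $a\in\units A$. This is a single direct step rather than a reduction from $A$ to $B$ and then from $B$ to $\nMat{F}{n}$; your ``summand argument'' remark is pointing in the right direction, but the commutant description you propose as the ``cleanest route'' does not do the job.
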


    \begin{proof}
        It is clear that $\units{A}\subseteq A\cap\units{A_K}$.
        To see the converse,
        let
        $a\in A\cap \units{A_K}$ and let $Q\in\rproj{F}$ be  such that $A\oplus Q\cong F^n$.
        Define $t:A\to\End_F(A)$ as in the proof of Proposition~\ref{PR:topological-algebra}.
        Then $a$ is invertible if and only if  $t(a)\oplus \id_Q\in\End_F(A\oplus Q)\cong\nMat{F}{n}$
        is invertible, which is equivalent to $\det(t(a)\oplus \id_Q)\in\units{F}$.
        Since $t(a)_K:A_K\to A_K$ is invertible, then so is $t(a)_K\oplus \id_{Q_K}\in \End_{K}(A_K\oplus Q_K)\cong \nMat{K}{n}$,
        hence $\det(t(a)_K\oplus \id_{Q_K})\in\units{K}$.
        On the other hand $\det(t(a)_K\oplus \id_{Q_K})=\det(t(a)\oplus \id_Q)\in F$,
        and so
        $\det(t(a)\oplus\id_Q)\in F\cap \units{K}=\units{F}$ (by assumption). Thus, $t(a)\oplus\id_Q$
        is invertible, as required.

        To finish, if $F$ is dense in $K$, then by Proposition~\ref{PR:top-scalar-ext}, $A$ is dense in $A_K$. By Proposition~\ref{PR:topological-algebra},
        $\units{A_K}$ is open $A_K$, so $\units{A}=A\cap\units{A_K}$ is dense in $\units{A_K}$.
    \end{proof}

    \begin{remark}\label{RM:rationally-closed}
        A subring $F\subseteq K$ satisfying $\units{F}=F\cap \units{K}$
        is called \emph{rationally closed} (in $K$). If $F$ is artinian then it automatically
        satisfies this condition, because being a non-unit in $F$ is equivalent to being a zero divisor.
    \end{remark}

    Let  $(A,\sigma,u,\Lambda)$ be a unitary $K$-algebra and let $(P,[f])$ be a unimodular quadratic
    space over $(A,\sigma,u,\Lambda)$.
    When the functor $L\mapsto O([f_L]):\Comm{K}\to\mathrm{Grp}$
    is representable,  there exists an affine group scheme over $K$, denoted $\uO([f])$,
    such that the functors $[L\mapsto O([f_L])]$ and $[L\mapsto \uO([f])(L)]$
    are isomorphic. In this case, we say that $\uO([f])$
    is \emph{defined as an affine $K$-scheme}.

    When $A\in\rproj{K}$ and $\Lambda$ is a summand of $A$, it is always
    true that $\uO([f])$ is defined as an affine $K$-scheme.
    Furthermore, in this case, $\End_A(P)$ is projective (being a $K$-summand of $\End_A(A^n,A^m)\cong A^{nm}$
    for some $n,m\in\N$)
    and the inclusion map $O([f])\to \End_A(P)$ is induced
    from a closed embedding $\uO([f])\to \underline{\End_{A}(P)}$.
    These somewhat technical facts are verified in the appendix.

\section{Weak Approximation}
\label{section:density}

    Let $K$ be a commutative semilocal topological ring and let $F$ be a dense subring
    of $K$. Let $(A,\sigma,u,\Lambda)$ be a unitary $F$-algebra and
    let $(P,[f])\in\Quad[u,\Lambda]{A,\sigma}$.
    In this section, we  prove a \emph{weak approximation theorem} for the group $O([f])$.
    Namely, we will show that under
    mild assumptions, the closure of the image of $O([f])$ in $O([f_K])$
    has finite index (which can be bounded effectively). This result will play an important role in the following sections.

    To give the flavor of the proof, let us sketch an ad-hoc proof in case $F$ and $K$ are fields of characteristic
    not $2$ and $(A,\sigma,u,\Lambda)=(F,\id_F,1,0)$: In this case, the Cartan--Dieudonn\'e Theorem implies that
    $O([f_K])$ is a generated by \emph{reflections}. Every reflection of $[f_K]$ can be topologically approximated
    by a reflection of $[f]$, and hence $O([f])$ is dense in $O([f_K])$.
    The proof that we give here follows essentially the same lines; the Cartan--Dieudonn\'e Theorem
    will be replaced with a certain generalization to semilocal base rings (see the introduction
    of \cite{Fi14A} for a brief survey of such results).

    We note that weak approximation theorems for reductive algebraic
    groups over general topological fields were studied by many authors, especially
    in the context of adjoint groups. See \cite{Thang96} and references therien for positive and
    negative results results.
    In fact, it is possible that our approximation result (Theorem~\ref{TH:density}, Corollary~\ref{CR:weak-approx-over-fields})
    can be deduced from such known results,
    at least when $F$ is a field and $A$ is a \emph{separable} $F$-algebra.
    The  methods we use here have the advantage of
    avoiding reductiveness issues, not using any valuation theory (any
    topological field works), and generalizing to semilocal rings.

\subsection{Generation by Pseudo-Reflections}
\label{subsection:reflections}

    Let $(A,\sigma,u,\Lambda)$ be a unitary ring and let $(P,[f])\in\Quad[u,\Lambda]{A,\sigma}$.
    For every $y\in P$ and $c\in \hat{f}(y)\cap\units{A}$, we define $s_{y,c}:P\to P$ by
    \[
    s_{y,c}(x)=x-y\cdot c^{-1}\cdot \tilde{h}_f(y,x)\qquad\forall \, x\in P\ .
    \]
    Following \cite[IV.\S2]{Bourbaki02LIEiv}, we call
    $s_{y,c}$  a \emph{pseudo-reflection} of $[f]$, or just a \emph{reflection} for short.
    It is well-known
    that $s_{y,c}\in O([f])$ and $s_{y,c}^{-1}=s_{y,c^\sigma u}$; see \cite[\S1]{Reiter75} or \cite[\S3]{Fi14A}.
    Denote by $O'([f])$ the subgroup of $O([f])$ generated by reflections.

    We now recall a theorem from \cite{Fi14A} describing the group $O'([f])$ in case $A$
    is {semilocal} and $P$ is free; see also \cite{Reiter75}
    for conditions guaranteeing that $O'([f])=O([f])$.
    Recall
    that $A$ is \emph{semilocal} if $A/\Jac(A)$ is semisimple artinian ($\Jac(A)$ denotes the Jacobson radical). If
    in addition idempotents lift modulo $\Jac(A)$, then $A$ is called \emph{semiperfect}.
    For example,
    all one-sided artinian rings are semiperfect; see \cite[\S2.7]{Ro88} for further examples and details.

\medskip

    Assume $A$ is semilocal.
    We set some general notation: Let $\quo{A}:=A/\Jac(A)$, let $\quo{\sigma}$ be the involution
    induced by $\sigma$ on $\quo{A}$, and set $\quo{a}=a+\Jac(A)$ for all $a\in A$.
    Then $(\quo{A},\quo{\sigma},\quo{u},\quo{\Lambda})$ is a semisimple unitary ring,
    hence by Proposition~\ref{PR:factorization-of-unitary-rings}, it factors into a product
    \[
    (\quo{A},\quo{\sigma},\quo{u},\quo{\Lambda})=\prod_{i=1}^t(A_i,\sigma_i,u_i,\Lambda_i)
    \]
    where the factors are simple artinian unitary rings (see~\ref{subsectnio:csa-s}).
    We further write $A_i=\nMat{D_i}{n_i}$ where $D_i$ is a division ring or a product of
    a division ring and its opposite.

    Every quadratic space $(P,[f])$ over $(A,\sigma,u,\Lambda)$ gives rise
    to a quadratic space $(\quo{P},[\quo{f}])$ over $(\quo{A},\quo{\sigma},\quo{u},\quo{\Lambda})$.
    Namely, $\quo{P}=P/P\Jac(A)$ and $\tilde{\quo{f}}$ is given by $\tilde{\quo{f}}(\quo{x},\quo{y})=\quo{\tilde{f}(x,y)}$
    for all $x,y\in P$
    (where  $\quo{x}=x+P\Jac(A)$).
    This in turn gives rise to quadratic spaces $(P_i,[f_i])_{i=1}^t$ over $(A_i,\sigma_i,u_i,\Lambda_i)_{i=1}^t$;
    if one writes $\quo{P}=\prod_i P_i$ with $P_i\in\rproj{A_i}$, then $\tilde{f}_i$ is just the restriction
    of $\tilde{\quo{f}}$ to $P_i\times P_i$. It is well-known that if $(P,[f])$ is unimodular,
    then so are $(P,[f_i])_{i=1}^t$ (see \cite[Lm.~4.3(ii)]{Fi14A} and more generally
    \cite[\S{}I.7.1]{Kn91}).

    Every isometry $\phi\in O([f])$ induces an isometry $\quo{\phi}\in O([\quo{f}])$
    given by $\quo{\phi}(\quo{x})=\quo{\phi x}$. Restricting $\quo{\phi}$ to $P_i$ yields
    an isometry $\phi_i\in O([f_i])$. It is easy  to check that $\phi\mapsto \phi_i:O([f])\to O([f_i])$
    is a group homomorphism.
    When $(A_i,\sigma_i,u_i,\Lambda_i)$ is split-orthogonal (see~\ref{subsectnio:csa-s}) and
    $(P,[f])$ is unimodular, we define
    \[
    \begin{array}{rccl}
        \Delta_i=\Delta_{i,[f]}: & O([f]) & \to & \Z/2\Z \\
        & \phi & \mapsto & \Delta_{[f_i]}(\phi_i)
    \end{array}
    \]
    where $\Delta_{[f_i]}$ is the Dickson map of $[f_i]$ (see~\ref{subsection:Dickson-inv}).
    If $\calI$ is any subset of $\{1,\dots,t\}$ consisting of indices
    $i$ for which $(A_i,\sigma_i,u_i,\Lambda_i)$ is split-orthogonal, we define
    \[
    \Delta_{\calI}=\Delta_{\calI,[f]}:O([f])\to (\Z/2\Z)^{\calI}
    \]
    by $\Delta_{\calI}(\phi)=(\Delta_i(\phi))_{i\in\calI}$.

    \begin{thm}
        \label{TH:gen-by-reflections}
        Let $(P,[f])\in\Quad[u,\Lambda]{\sigma,A}$.  Assume that $P$ is free
        and for all $1\leq i\leq t$, $D_i$ is not isomorphic to $\F_2$ or $\F_2\times\F_2$.
        Let $\calI=\calI(A)$ denote the set of $i$-s for which $(A_i,\sigma_i,u_i,\Lambda_i)$
        is split-orthogonal, and let $\xi=(n_i+2\Z)_{i\in\calI}=(\deg A_i+2\Z)_{i\in\calI}\in (\Z/2\Z)^{\calI}$.
        Then
        \[
        O'([f])=\Delta_{\calI}^{-1}(\{0,\xi\})\ .
        \]
        When $A$ is semiperfect, the theorem holds under the milder assumption that
        $P_i\neq 0$ for all $1\leq i\leq t$ ($P$ does not have to be free).
        Furthermore, in this case, $\Delta_{\calI}$ is onto.
    \end{thm}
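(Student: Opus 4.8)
The plan is to descend to the semisimple quotient $\quo A=A/\Jac(A)$, decompose it into simple factors via Proposition~\ref{PR:factorization-of-unitary-rings}, and treat each factor by a Cartan--Dieudonn\'e-type theorem; the inclusions ``$\subseteq$'' and ``$\supseteq$'' have very different flavours. First I would reduce modulo $\Jac(A)$. Each $\Delta_i$ is, by construction, the composite of $\phi\mapsto\quo\phi:O([f])\to O([\quo f])$ with $\Delta_{[f_i]}$ on the $i$-th component, so it suffices to prove the statement over $(\quo A,\quo\sigma,\quo u,\quo\Lambda)=\prod_i(A_i,\sigma_i,u_i,\Lambda_i)$ once one knows that $O'([f])$ is the full preimage of $O'([\quo f])$. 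For that I would check: (a) every reflection $s_{\quo y,\quo c}$ of $[\quo f]$ lifts to a reflection of $[f]$ --- pick any lift $y$ of $\quo y$, write $\quo c=\tilde{\quo f}(\quo y,\quo y)+\quo\lambda$ with $\quo\lambda\in\quo\Lambda$, lift $\quo\lambda$ to $\lambda\in\Lambda$, and set $c=\tilde f(y,y)+\lambda\in\hat f(y)$; as $\quo c\in\units{\quo A}$, any lift is a unit, so $c\in\hat f(y)\cap\units A$ --- whence $O'([f])\twoheadrightarrow O'([\quo f])$; and (b) the kernel of $O([f])\to O([\quo f])$ lies in $O'([f])$, a Witt/Reiter-type fact (an isometry $\equiv\id\pmod{\Jac A}$ is a product of ``elementary'' pieces, each a product of two reflections) in which the hypotheses on $P$ and on the residue division rings are first used. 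After this reduction, the running hypothesis becomes ``$P_i\neq0$ for all $i$''.

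\textbf{The inclusion $O'([f])\subseteq\Delta_\calI^{-1}(\{0,\xi\})$.} Since reflections generate $O'([f])$ and $\Delta_\calI$ is a homomorphism into $(\Z/2\Z)^\calI$, it is enough to show $\Delta_\calI(s)=\xi$ for every reflection $s=s_{y,c}$. Fix $i\in\calI$. Computing $\Delta_{[f_i]}$ from the explicit formula recalled in~\ref{subsection:Dickson-inv}, a reflection of $[f_i]$ --- and likewise the identity --- has $\Delta_{[f_i]}$-value $\deg A_i+2\Z$. If $\deg A_i$ is even this equals $0=\xi_i$, whatever $\quo y_i$ is. If $\deg A_i$ is odd I claim $\quo y_i\neq0$ is forced: a determinant computation (elements of $\Lambda_i$ are $\sigma_i$-alternating, hence singular in odd degree in every characteristic) shows $\Lambda_i\cap\units{A_i}=\emptyset$, so $\quo c_i\in\Lambda_i$ could not be a unit; hence $(s)_i=s_{\quo y_i,\quo c_i}$ is an honest reflection of $[f_i]$ and $\Delta_i(s)=\deg A_i+2\Z=1=\xi_i$. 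Either way $\Delta_\calI(s)=\xi$, so $\Delta_\calI(O'([f]))=\langle\xi\rangle=\{0,\xi\}$, which gives the inclusion.

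\textbf{The reverse inclusion (the hard part).} This is the Cartan--Dieudonn\'e core. For each simple factor I would establish that $O'([f_i])=O([f_i])$ when $(A_i,\sigma_i,u_i,\Lambda_i)$ is not split-orthogonal (the classical generation of symplectic, unitary and non-split-orthogonal groups by reflections and transvections), and $O'([f_i])=\Delta_{[f_i]}^{-1}(\langle\deg A_i+2\Z\rangle)$ when it is --- that is, all of $O([f_i])$ for $\deg A_i$ odd and $\ker\Delta_{[f_i]}$ for $\deg A_i$ even. Both go by the usual induction (reflect a chosen vector onto a suitable target, split off a hyperbolic plane or an anisotropic line, with $P_i\neq0$ keeping the induction non-vacuous); the exclusion $D_i\not\cong\F_2,\F_2\times\F_2$ is exactly what avoids the familiar failures of Cartan--Dieudonn\'e over the field with two elements. (This is the substance of the corresponding theorem of \cite{Fi14A}; cf.\ also \cite{Reiter75} and \cite{Bourbaki02LIEiv}.) Now, given $\phi=(\phi_i)\in O([\quo f])$ with $\Delta_\calI(\phi)=\varepsilon\xi$, $\varepsilon\in\{0,1\}$, each $\phi_i$ lies in $O'([f_i])$ --- automatically for $i\notin\calI$, and for $i\in\calI$ because $\Delta_i(\phi_i)=\varepsilon\xi_i\in\langle\deg A_i+2\Z\rangle$. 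Writing each $\phi_i$ as a product of reflections of $[f_i]$ and lifting these to reflections of $[\quo f]$ (possible since $[\quo f]$ is unimodular, the forced activity in the odd-degree split-orthogonal blocks being reconciled as in the previous step), one gets $\phi\in O'([\quo f])$.

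\textbf{Semiperfect refinement and the main obstacle.} If $P$ is free then $\quo P$ is free over $\quo A$, so all $P_i\neq0$; thus ``$P$ free'' is a special case. When $A$ is semiperfect, idempotents lift and Krull--Schmidt holds, which supplies the decomposition of an arbitrary projective $P$ (with all $P_i\neq0$) into indecomposables that the arguments above used a basis for. Surjectivity of $\Delta_\calI$ in the semiperfect case then follows because reflections already realise $\xi$ and, for each $i\in\calI$, one can build and lift an isometry with the prescribed $i$-th Dickson value and trivial Dickson value elsewhere: a reflection when $\deg A_i$ is odd, and an isometry supported on a hyperbolic sub-plane of the $i$-th block (such summands exist and lift over semiperfect rings) when $\deg A_i$ is even. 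The main obstacle is the reverse inclusion --- the Cartan--Dieudonn\'e generation for the simple factors, and in particular the characteristic-sensitive identification of $O'([f_i])$ in the split-orthogonal case and the bookkeeping needed to exclude the residue rings $\F_2$ and $\F_2\times\F_2$.
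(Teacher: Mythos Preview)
The paper does not prove this theorem; it simply records ``See \cite[Th.~5.8, Th.~5.10]{Fi14A}.'' Your sketch is a reasonable outline of the strategy one expects in that reference: reduce modulo $\Jac(A)$, apply a Cartan--Dieudonn\'e--type result on each simple artinian factor, and reassemble. The computation $\Delta_\calI(s_{y,c})=\xi$ (with the observation that $\Lambda_i\cap\units{A_i}=\emptyset$ forces $\quo y_i\neq 0$ in the odd-degree split-orthogonal blocks) is correct and gives the inclusion $O'([f])\subseteq\Delta_\calI^{-1}(\{0,\xi\})$.

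Two steps are genuinely under-argued. First, the claim $\ker\bigl(O([f])\to O([\quo f])\bigr)\subseteq O'([f])$ is a real theorem, not a formality; this is where freeness of $P$ and the exclusion of $\F_2$, $\F_2\times\F_2$ enter, and you give no mechanism beyond the phrase ``Witt/Reiter-type fact''. Second, the reassembly of factor-wise products of reflections into global reflections of $[\quo f]$ is delicate: a global reflection $s_{y,c}$ acts as the identity on block $i$ only when $\Lambda_i\cap\units{A_i}\neq\emptyset$, and you have verified this fails in the odd-degree split-orthogonal blocks but have not checked it \emph{holds} in the remaining blocks, nor synchronised the parities across all blocks. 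The same issue undermines your surjectivity argument in the semiperfect case: an ordinary reflection always has $\Delta_\calI$-value exactly $\xi$, so when there are two or more odd-degree split-orthogonal factors your proposed generators (reflections plus hyperbolic-plane isometries in the even-degree blocks) do not span $(\Z/2\Z)^\calI$.

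In \cite{Fi14A} both gaps are closed using \emph{$e$-reflections}---reflections relative to an idempotent $e\in A$---which allow one to act nontrivially on a chosen block while remaining the identity elsewhere without needing $\Lambda_i\cap\units{A_i}\neq\emptyset$. The present paper itself invokes this device later (see the proof of Theorem~\ref{TH:enough-idems} and the references to \cite[Pr.~5.2, Lm.~3.4]{Fi14A}); incorporating it would repair your sketch.
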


    \begin{proof}
        See \cite[Th.~5.8, Th.~5.10]{Fi14A}.
    \end{proof}

    \begin{cor}[{\cite[Cr.~5.9]{Fi14A}}]
        \label{CR:gen-by-reflections}
        In the setting of Theorem~\ref{TH:gen-by-reflections}, $[O([f]):O'([f])]$ is
        a finite power of $2$.
    \end{cor}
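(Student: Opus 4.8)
The plan is to deduce the corollary directly from Theorem~\ref{TH:gen-by-reflections} by bounding the index of the subgroup $O'([f])$ inside $O([f])$. First I would reduce to the case where $P$ is free: by Remark~\ref{RM:free-base-module}(i), applying transfer with $(Q,h)=(P,h_f)$ produces a quadratic space over $(B,\tau,1,\Gamma)$ with base module $B_B$ — hence free — and by Proposition~\ref{PR:transfer}(ii) this identification carries $O([f])$ isomorphically onto the isometry group of the transferred form while identifying reflections with reflections (this last point needs a sentence of verification, since $O'$ is defined via the generating set of pseudo-reflections; one checks that transfer sends $s_{y,c}$ to a pseudo-reflection and conversely). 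So we may assume $P$ is free. If some $D_i\cong\F_2$ or $\F_2\times\F_2$ the theorem as stated does not apply; I would note that the statement of Corollary~\ref{CR:gen-by-reflections} is taken verbatim from \cite[Cr.~5.9]{Fi14A}, so one simply invokes that reference, or alternatively restricts attention to the setting of Theorem~\ref{TH:gen-by-reflections} as the corollary's phrasing ("In the setting of Theorem~\ref{TH:gen-by-reflections}") indeed does.

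Granting the hypotheses of Theorem~\ref{TH:gen-by-reflections}, the key step is the identity $O'([f])=\Delta_{\calI}^{-1}(\{0,\xi\})$. The map $\Delta_{\calI}\colon O([f])\to(\Z/2\Z)^{\calI}$ is a group homomorphism into a finite group of order $2^{|\calI|}$, and $\{0,\xi\}$ is either a subgroup of order $2$ (when $\xi\neq 0$) or the trivial subgroup (when $\xi=0$). In either case $\Delta_{\calI}^{-1}(\{0,\xi\})$ is the preimage of a subgroup, hence a subgroup of $O([f])$, and
\[
[O([f]):O'([f])]=[O([f]):\Delta_{\calI}^{-1}(\{0,\xi\})]=[\,\im\Delta_{\calI}:\im\Delta_{\calI}\cap\{0,\xi\}\,],
\]
which divides $|(\Z/2\Z)^{\calI}|=2^{|\calI|}$ and is therefore a finite power of $2$. (One can be slightly more precise: the index equals $2^{|\calI|}$ or $2^{|\calI|-1}$ depending on whether $\xi\in\im\Delta_{\calI}$, but for the corollary only the divisibility is needed.) When $A$ is merely semiperfect rather than having $P$ free, the same computation applies with $P_i\neq 0$ in place of freeness, using the last clause of Theorem~\ref{TH:gen-by-reflections}.

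I do not expect a serious obstacle here: this is a purely formal consequence of Theorem~\ref{TH:gen-by-reflections} together with the fact that a preimage of a subgroup under a homomorphism has index equal to the index of that subgroup inside the image. The only point requiring genuine care is the reduction via transfer in the non-free case — one must confirm that transfer identifies the subgroup generated by pseudo-reflections on the $A$-side with the corresponding subgroup on the $B$-side — but since $O([f])=O([\mathrm{T}_hf])$ as abstract groups and transfer is functorial, and since Theorem~\ref{TH:gen-by-reflections} already handles the semiperfect case with $P_i\neq 0$ without requiring $P$ free, this reduction can in fact be bypassed entirely whenever $A$ is semiperfect. In the general semilocal case one genuinely uses the free-module hypothesis of the theorem and invokes transfer, or, most simply, cites \cite[Cr.~5.9]{Fi14A} directly.
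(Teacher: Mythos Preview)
Your argument is correct: the corollary is an immediate consequence of Theorem~\ref{TH:gen-by-reflections}, since $O'([f])=\Delta_{\calI}^{-1}(\{0,\xi\})$ is the preimage of a subgroup under a homomorphism into the finite $2$-group $(\Z/2\Z)^{\calI}$, and hence its index in $O([f])$ equals $[\im\Delta_{\calI}:\im\Delta_{\calI}\cap\{0,\xi\}]$, which is a power of $2$. The paper itself gives no proof, simply citing \cite[Cr.~5.9]{Fi14A}; your argument is exactly the natural one-line deduction.

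A minor comment: your extended discussion of transfer and of the $\F_2$ exclusions is unnecessary, since the phrase ``in the setting of Theorem~\ref{TH:gen-by-reflections}'' already places you under that theorem's hypotheses and conclusion. Also, your parenthetical remark that the index is ``$2^{|\calI|}$ or $2^{|\calI|-1}$'' tacitly assumes $\Delta_{\calI}$ is surjective, which the theorem only guarantees in the semiperfect case; in general the index is merely a divisor of $2^{|\calI|}$, which is all you need.
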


\subsection{The Dickson Map and Transfer}
\label{subsection:dickson-and-transfer}

    Keep the setting of~\ref{subsection:reflections}.
    We now verify that the map $\Delta_{\calI}$ of Theorem~\ref{TH:gen-by-reflections} is compatible
    with transfer in the sense of~\ref{subsection:transfer}.

\medskip

    Let $(P,[f])\in\Quad[u,\Lambda]{A,\sigma}$
    and define $(B,\tau,1,\Gamma)$ as in~\ref{subsection:transfer}
    with $(Q,h)=(P,h_f)$. Write also $[g]=[\mathrm{T}_hf]$ and identify
    $O([f])$ with $O([g])$ as in Proposition~\ref{PR:transfer}.
    Similarly to~\ref{subsection:reflections}, we write
    $(\quo{B},\quo{\tau},\quo{1},\quo{\Gamma})=\prod_{j=1}^s(B_j,\tau_j,1,\Gamma_j)$
    where the factors are simple artinian unitary rings.
    This gives rise to quadratic spaces $(B_i,[g_i])\in\Quad[1,\Gamma_i]{B_i,\tau_i}$.
    Finally, let $\calI(B)$ be the set of $j$-s for which
    $(B_j,\tau_j,1,\Gamma_j)$ is split-orthogonal and
    let $\calI(P)$ be the set of $i$-s for which
    $(A_i,\sigma_i,u_i,\Lambda_i)$ is split orthogonal and $P_i\neq 0$.

    \begin{prp}\label{PR:compatiability-with-Dickson-inv}
        There is an isomorphism $\calI(B)\cong \calI(P)$ such that the induced
        isomorphism $(\Z/2\Z)^{\calI(B)}\cong (\Z/2\Z)^{\calI(P)}$
        makes the following diagram commute:
        \[
        \xymatrixcolsep{5pc}\xymatrix{
        O([f]) \ar@{=}[d] \ar[r]^{\Delta_{\calI(P),[f]}} & (\Z/2\Z)^{\calI(P)} \ar[d] \\
        O([g]) \ar[r]^{\Delta_{\calI(B),[g]}} & (\Z/2\Z)^{\calI(B)}
        }
        \]
    \end{prp}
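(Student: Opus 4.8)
The plan is to unwind the definition of $(\quo{B},\quo{\tau},\quo{1},\quo{\Gamma})$, match its simple factors with those of $(\quo{A},\quo{\sigma},\quo{u},\quo{\Lambda})$ by means of transfer, and then use that the Dickson map is ``intrinsic'' enough to survive transfer. First I would recall the standard description $\Jac(B)=\Jac(\End_A(P))=\{\psi\in\End_A(P)\suchthat\psi(P)\subseteq P\Jac(A)\}$, which yields a canonical isomorphism $\quo{B}:=B/\Jac(B)\cong\End_{\quo{A}}(\quo{P})$. Together with the decomposition $\quo{A}=\prod_{i=1}^tA_i$ and the induced $\quo{P}=\prod_iP_i$ with $P_i\in\rproj{A_i}$, this gives $\quo{B}\cong\prod_{i=1}^t\End_{A_i}(P_i)$, the $i$-th factor being nonzero exactly when $P_i\neq0$. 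The key point to check is that \emph{transfer commutes with reduction modulo the Jacobson radical}: reduction commutes with the duality $*$, with $\omega_P$ and with $f\mapsto h_f$, and the map $\Lambda_P\to\Lambda_{\quo{P}}$ is onto (by the argument proving surjectivity of $\alpha$ in Proposition~\ref{PR:scalar-ext-for-quad-forms}); hence $(\quo{B},\quo{\tau},\quo{1},\quo{\Gamma})$ coincides with the transfer of $(\quo{A},\quo{\sigma},\quo{u},\quo{\Lambda})$ along $(\quo{P},h_{\quo{f}})$. Since the central idempotents cutting out the $A_i$ are $\quo{\sigma}$-invariant, $h_{\quo{f}}$ respects the decomposition $\quo{P}=\prod_iP_i$, so this transfer is the product over $i$ of the transfers of $(A_i,\sigma_i,u_i,\Lambda_i)$ along $(P_i,h_{f_i})$. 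By Proposition~\ref{PR:transfer-preserve-split-orth} each nonzero factor is simple artinian, so by the essential uniqueness of the factorization in Proposition~\ref{PR:factorization-of-unitary-rings} these factors are, after reindexing, exactly the $(B_j,\tau_j,1,\Gamma_j)$. This produces a canonical bijection between $\{1,\dots,s\}$ and $\{i\suchthat P_i\neq0\}$ under which $(B_j,\tau_j,1,\Gamma_j)$ is the transfer of $(A_i,\sigma_i,u_i,\Lambda_i)$ along $(P_i,h_{f_i})$.

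By Proposition~\ref{PR:transfer-preserve-split-orth} again, under this bijection $(B_j,\tau_j,1,\Gamma_j)$ is split-orthogonal if and only if $(A_i,\sigma_i,u_i,\Lambda_i)$ is; as every index involved already satisfies $P_i\neq0$, the bijection restricts to a bijection $\calI(B)\cong\calI(P)$, and hence to the isomorphism $(\Z/2\Z)^{\calI(B)}\cong(\Z/2\Z)^{\calI(P)}$ claimed in the proposition.

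To check commutativity, fix $\phi\in O([f])=O([g])$; it suffices to show $\Delta_{[f_i]}(\phi_i)=\Delta_{[g_j]}(\phi_j)$ for each corresponding pair $i\in\calI(P)$, $j\in\calI(B)$. Reducing, $\phi$ induces $\quo{\phi}\in O([\quo{f}])=O([\quo{g}])$, and by the compatibility above $\quo{\phi}$, viewed in $O([\quo{g}])$ (a quadratic space over $\quo{B}$ with base module $\quo{B}_{\quo{B}}$), is left multiplication by $\quo{\phi}\in\quo{B}=\End_{\quo{A}}(\quo{P})$. Writing $\quo{\phi}=(\phi_i)_i$ under $\quo{B}=\prod_i\End_{A_i}(P_i)$, the component $\phi_j\in O([g_j])$ is left multiplication by $\phi_i$ on $(B_j)_{B_j}$, and under the identification $\End_{B_j}((B_j)_{B_j})\cong B_j\cong\End_{A_i}(P_i)$ of Proposition~\ref{PR:transfer} it equals the element $\phi_i$ itself. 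Since both Dickson maps are computed by the same intrinsic recipe (\S\ref{subsection:Dickson-inv}) in the ring $E:=\End_{A_i}(P_i)=B_j$ over its center $\Cent E$ --- via $\rdnorm[E/\Cent E](\psi)=(-1)^{\Delta(\psi)}$ when $2$ is invertible, and via $\Delta(\psi)=\frac{\dim_{\Cent E}(1-\psi)E}{\deg E}+2\Z$ in general --- applied to the same element $\phi_i=\phi_j\in\units{E}$ (note $\Cent(B_j)=\Cent(A_i)$, since $\tau_j$, like $\sigma_i$, is of the first kind), the two values coincide.

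The main obstacle I anticipate is the claim that transfer commutes with reduction modulo $\Jac$, i.e.\ that the reduction functor carries the transferred unitary structure on $B$ to the transfer of the reduced structure and sends $O([f])\subseteq\units{\End_A(P)}$ to $O([\quo{f}])\subseteq\units{\End_{\quo{A}}(\quo{P})}$ correctly. Granting the description of $\Jac(\End_A(P))$ and the isomorphism $\End_A(P)/\Jac\cong\End_{\quo{A}}(\quo{P})$, what remains are routine manipulations of duals, morphisms and form parameters that are implicit in the transfer formalism of \cite[\S{}II.3]{Kn91} and \cite{Fi14A}; once these are established, the rest of the argument is formal.
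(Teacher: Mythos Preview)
Your proposal is correct and follows essentially the same route as the paper's proof: identify $\Jac(B)=\Hom_A(P,P\Jac(A))$ so that $\quo{B}\cong\End_{\quo{A}}(\quo{P})\cong\prod_i\End_{A_i}(P_i)$, check that the unitary structure on $\quo{B}$ agrees with the product of the transfers along $(P_i,h_{f_i})$, invoke Proposition~\ref{PR:transfer-preserve-split-orth} to obtain the bijection $\calI(B)\cong\calI(P)$, and conclude commutativity from the fact that the Dickson map depends only on the ring $\End_{A_i}(P_i)\cong B_j\cong\End_{B_j}(B_j)$ and the element $\phi_i$ inside it. The paper dispatches your anticipated ``transfer commutes with reduction'' obstacle in a single sentence (``easily seen to coincide''), so your more explicit treatment of this point is, if anything, an improvement in exposition.
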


    \begin{proof}
        We first claim that $\Jac(B)=\Hom_A(P,P\Jac(A))$
        and $B/\Jac(B)\cong\End_{\quo{A}}(\quo{P})$. This is a standard argument:
        It is easy to see that
        for all $P,Q\in\rproj{A}$, we have a natural isomorphism
        $\Hom_A(P,Q)/\Hom_A(P,Q\Jac(A))\cong\Hom_{\quo{A}}(\quo{P},\quo{Q})$
        (check this when $P=Q=A_A$ and then extend to general $P$ and $Q$ using the naturality).
        Thus, $B/\Hom_A(P,P\Jac(A))\cong\End_{\quo{A}}(\quo{P})$ (as rings). Since $\quo{A}$
        is semisimple, $B/\Hom_A(P,P\Jac(A))$ is semisimple and hence $\Jac(B)\subseteq\Hom_A(P,P\Jac(A))$.
        To see the other inclusion, observe that if $\phi\in 1+\Hom_A(P,P\Jac(A))$, then $\im(\phi)+P\Jac(A)=P$,
        so by Nakayama's Lemma, $\phi$ is onto. Since $P$ is projective, $\phi$ admits a right inverse. It
        follows that $1+\Hom_A(P,P\Jac(A))$ consists of right-invertible elements, hence $\Hom_A(P,P\Jac(A))\subseteq \Jac(B)$.

        Since $\quo{A}=\prod_iA_i$,
        we have $\quo{B}\cong\End_{\quo{A}}(\quo{P})=\prod_{i=1}^t\End_{A_i}(P_i)$ (it is possible that $P_i=0$).
        By~\ref{subsection:transfer}, the hermitian space $(P_i,h_{f_i})$ induces a
        unitary ring structure on $\End_{A_i}(P_i)$. The resulting unitary ring structure
        on $\prod_{i=1}^t\End_{A_i}(P_i)$ is easily seen to coincide with the one on $\quo{B}$.
        We may therefore identify $\quo{B}$ and $\prod_{i=1}^t\End_{A_i}(P_i)$ as unitary
        rings. Let $\calJ=\{1\leq i\leq t\where P_i\neq 0\}$. By Proposition~\ref{PR:transfer-preserve-split-orth},
        the rings $\End_{A_i}(P_i)$ are simple artinian as unitary rings, and $\End_{A_i}(P_i)$ is split-orthogonal if
        and only $A_i$ is split-orthogonal and $P_i\neq 0$. This gives rise to a bijection $\alpha:\{1,\dots,s\}\to \calJ$
        such that for all $j$, $B_j\cong\End_{A_{\alpha(j)}}(P_{\alpha(j)})$ as unitary rings,
        and $B_j$ is split-orthogonal if and only if $A_{\alpha(j)}$ is split-orthogonal.
        In particular, $\alpha$ restricts to a bijection between $\calI(B)$ and $\calI(P)$.

        The commutativity of the diagram follows directly from the definition of the maps
        $\Delta_{\alpha(j),[f]}$ and $\Delta_{j,[g]}$ ($j\in\calI(B)$) (see~\ref{subsection:Dickson-inv}); it depends only on the isomorphism class
        of the ring $\End_{A_{\alpha(j)}}(P_{\alpha(j)})\cong B_j\cong \End_{B_j}(B_j)$.
    \end{proof}

\subsection{Weak Approximation}
\label{subsection:density}

    We now use  Theorem~\ref{TH:gen-by-reflections} to prove a weak approximation theorem.

    \begin{thm}\label{TH:density}
        Let $K$ be a semilocal commutative Hausdorff topological ring, let $F$ be a subring of $K$,
        let $(A,\sigma,u,\Lambda)$ be a unitary $F$-algebra and let $(P,[f])\in\Quad[u,\Lambda]{A,\sigma}$.
        Assume that:
        \begin{enumerate}
            \item[$(0)$] $\F_2$ is not an epimorphic image of $K$,
            \item[$(1)$] $\units{K}$ is open in $K$ and the map $a\mapsto a^{-1}:\units{K}\to\units{K}$
            is continuous,
            \item[$(2)$] $F$ is dense in $K$ and $F\cap \units{K}=\units{F}$,
            \item[$(3)$] $A$ is a finitely generated projective $F$-module and $\Lambda$ is an $F$-summand of $A$.
        \end{enumerate}
        Topologize $\End_{A_K}(P_K)$ as in section~\ref{section:topology}
        and give $O([f_K])$ the subspace topology. In addition, define
        $\calI(P_K)$ as in \ref{subsection:dickson-and-transfer} (with $A_K$ in the role of $A$). Then,
        viewing $O([f])$ as a subgroup of $O([f_K])$ via $\phi\mapsto \phi_K$,
        we have
        \[
        \overline{O([f])}\supseteq \ker\Delta_{\calI(P_K),[f_K]}\ .
        \]
        In particular, $[O([f_K]):\overline{O([f])}]$ is a finite power of $2$.
    \end{thm}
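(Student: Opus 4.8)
The plan is to reduce to the case where the base module is free, use the transfer compatibility of the Dickson maps, and then approximate a generating set of reflections. Here is the outline in detail.

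\textbf{Step 1: Reduce to a free base module via transfer.} By Remark~\ref{RM:free-base-module}(i), applying transfer with $(Q,h)=(P,h_f)$ replaces $(P,[f])$ over $(A,\sigma,u,\Lambda)$ by a quadratic space $(B_B,[g])$ over $(B,\tau,1,\Gamma)$, where $B=\End_A(P)$, and the base module is now free. Proposition~\ref{PR:transfer} gives $O([f])=O([g])$ as groups, and Remark~\ref{RM:free-base-module}(ii) says transfer commutes with scalar extension, so $O([f_K])=O([g_K])$ as well. Since $A\in\rproj{F}$ and $\Lambda$ is an $F$-summand of $A$, one checks that $B\in\rproj{F}$ and $\Gamma$ is an $F$-summand of $B$ (this is the routine bookkeeping behind the remark after Proposition~\ref{PR:density-of-units} and the appendix); moreover the topology on $\End_{A_K}(P_K)$ used to topologize $O([f_K])$ agrees, under transfer, with the topology on $\End_{B_K}(B_K)=B_K$ coming from section~\ref{section:topology}, since both are the subspace/product topologies of Proposition~\ref{PR:point-wise-topology}. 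Finally, Proposition~\ref{PR:compatiability-with-Dickson-inv} gives a bijection $\calI(B_K)\cong\calI(P_K)$ under which $\Delta_{\calI(P_K),[f_K]}$ corresponds to $\Delta_{\calI(B_K),[g_K]}$. So it suffices to prove the theorem for $(B_B,[g])$, i.e.\ we may assume $P=A_A$ is free.

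\textbf{Step 2: Identify what we must approximate.} Assume $P$ free. Hypothesis~$(0)$ says $\F_2$ is not an epimorphic image of $K$; since any simple factor $D_i$ of $A_K/\Jac(A_K)$ being $\F_2$ or $\F_2\times\F_2$ would force $\F_2$ to be an epimorphic image of $A_K$ and hence—because $\F_2$ has no nontrivial endomorphisms and $K\to A_K$ is central—of $K$ (one should spell this out, but it is essentially that the center of such a factor is $\F_2$), the hypotheses of Theorem~\ref{TH:gen-by-reflections} are met for $A_K$. That theorem gives $O'([g_K])=\Delta_{\calI(P_K)}^{-1}(\{0,\xi\})$, so in particular $\ker\Delta_{\calI(P_K),[g_K]}\subseteq O'([g_K])$, and $O'([g_K])$ is generated by the pseudo-reflections $s_{y,c}$ with $y\in P_K$, $c\in\hat{g}_K(y)\cap\units{A_K}$. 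Hence it is enough to show that the closure $\overline{O([g])}$ contains every such reflection $s_{y,c}$: then it contains $O'([g_K])\supseteq\ker\Delta_{\calI(P_K),[g_K]}$, and since $[O([g_K]):O'([g_K])]$ is a finite power of $2$ by Corollary~\ref{CR:gen-by-reflections} (and $O'([g_K])$ has finite index in it, hence is closed... or rather: $\overline{O([g])}$ contains a finite-index subgroup of $O([g_K])$, so has finite index, a $2$-power, as claimed).

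\textbf{Step 3: Approximate a single reflection.} Fix $y\in P_K$ and $c\in\hat g_K(y)\cap\units{A_K}$; we want $s_{y,c}\in\overline{O([g])}$. The map $(y',c')\mapsto s_{y',c'}$, wherever defined, is built from $K$-multilinear operations and the inversion of $c'$, hence is continuous in $(y',c')$ by Propositions~\ref{PR:multilinear-is-continuous}, \ref{PR:topological-algebra} and the fact that $O([g_K])\subseteq\End_{A_K}(P_K)$ carries the subspace topology. The point is to perturb $(y,c)$ to $(y',c')$ with $y'\in P$ and $c'\in\hat g(y')\cap\units{A}$, i.e.\ with the reflection data defined \emph{over $F$}. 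Since $F$ is dense in $K$, $P=A$ is dense in $P_K=A_K$ (Proposition~\ref{PR:top-scalar-ext}), so we may choose $y'\in P$ arbitrarily close to $y$. The subtle point is the scalar: $c$ lies in the coset $\hat g_K(y)=\tilde g_K(y,y)+\Gamma_K\subseteq A_K/\Gamma_K$; we need $c'\in A$ with $c'+\Lambda\equiv\hat g(y')$ in $A/\Lambda$ and $c'\in\units{A}$, close to $c$. Write $c=\tilde g_K(y,y)+\lambda$ for a suitable lift; when $y'$ is close to $y$, $\tilde g(y',y')$ is close to $\tilde g_K(y,y)$ (multilinearity), and since $\Lambda$ is an $F$-summand of $A$ with $\Lambda_K=\Gamma$ its scalar extension, we can pick $\lambda'\in\Lambda$ close to $\lambda$; set $c'=\tilde g(y',y')+\lambda'$, so $c'\in\hat g(y')$ and $c'\to c$. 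Finally $\units{A}=A\cap\units{A_K}$ is open in $A$ with $\units{A}$ dense in $\units{A_K}$ by Proposition~\ref{PR:density-of-units} (using hypothesis~$(2)$), and $c\in\units{A_K}$, so for $c'$ close enough to $c$ we get $c'\in\units{A}$; thus $s_{y',c'}$ is a genuine reflection of $[g]$, lies in $O([g])$, and $s_{y',c'}\to s_{y,c}$ by continuity. Hence $s_{y,c}\in\overline{O([g])}$.

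\textbf{Main obstacle.} The genuinely delicate point is Step~3's simultaneous approximation: one must perturb $y$ \emph{and} at the same time keep the scalar $c$ inside the (perturbed) coset $\hat g(y')\cap\units{A}$ while staying close to $c$. This is where hypotheses~$(2)$ and~$(3)$ are used crucially—$(3)$ guarantees $\Gamma=\Lambda\otimes^A_F K$ genuinely, so lifts in $\Lambda$ can be chosen densely, and $(2)$ (rational closedness plus density) is exactly what Proposition~\ref{PR:density-of-units} needs to conclude that the unit condition is an open condition survived by the approximation. Everything else—Step~1's transfer reduction and Step~2's invocation of Theorem~\ref{TH:gen-by-reflections}—is bookkeeping, modulo the one genuinely-needs-checking claim that hypothesis~$(0)$ rules out $\F_2$ and $\F_2\times\F_2$ factors in $A_K/\Jac A_K$.
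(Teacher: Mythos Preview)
Your proposal is correct and follows essentially the same approach as the paper: reduce to a free base module via transfer using Proposition~\ref{PR:compatiability-with-Dickson-inv}, invoke Theorem~\ref{TH:gen-by-reflections} (with condition~(0) disposing of the $\F_2$ obstruction) to reduce to approximating reflections, and then approximate the data $(y,\gamma)\in P_K\times\Lambda_K$ of a reflection by data in $P\times\Lambda$ using density and Proposition~\ref{PR:density-of-units}. The paper's Step~3 is organized slightly more concretely---it uses Proposition~\ref{PR:point-wise-topology} to reduce continuity to finitely many evaluation points $x_1,\dots,x_n$ and then checks continuity of the explicit function \eqref{EQ:reflection-functions} in $(y,\gamma)$ jointly---but this amounts to the same argument you sketch.
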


    \begin{proof}
        Observe first that condition (3) implies that $A_K$ and $\Lambda_K$ are projective $K$-modules.
        Since $P_K\in\rproj{A_K}$, $\End_{A_K}(P_K)$ is a projective $K$-module
        and hence can be topologized as in section~\ref{section:topology}.
        Furthermore, $\units{\End_{A_K}(P_K)}$ is a topological group by Proposition~\ref{PR:topological-algebra},
        so $O([f_K])$ is a topological group as well. Alternatively, we can topologize $O([f_K])$ directly
        using Proposition~\ref{PR:topologizing-schemes} and the fact that $\uO([f_K])$ is defined as an affine $K$-scheme by condition (3).
        This gives the same topology.

        Also note that $A_K$ is indeed semilocal:
        By \cite[Cr.~II.4.2.4]{Kn91},
        $A_K\Jac(K)\subseteq \Jac(A_K)$. Since $A_K/A_K\Jac(K)$ is a finitely generated $K/\Jac(K)$-module,
        it is an artinian ring. It follows that $A_K/\Jac(A_K)$ is an epimorphic image of an artinian ring (namely,
        $A_K/A_K\Jac(K)$),
        and hence semisimple artinian.

\smallskip

        We now turn to the proof itself:
        By Proposition~\ref{PR:compatiability-with-Dickson-inv}, we may apply
        transfer (see~\ref{subsection:transfer}) and hence assume that $P$ is free.
        Now, by Theorem~\ref{TH:gen-by-reflections} and condition (0), every isometry
        in $\ker\Delta_{\calI(P_K)}$ is a product of reflections, so it is enough to show that
        every neighborhood of a reflection of $[f_K]$ contains a reflection
        of $[f]$.
        Indeed, let $s=s_{y,c}$ be a reflection of $[f_K]$. By Proposition~\ref{PR:point-wise-topology},
        every neighborhood of $s$ contains a set of the form
        $\{\psi\in O([f_K])\suchthat \psi x_i-s x_i\in U_i~\forall 1\leq i\leq n\}$,
        where $x_1,\dots,x_n\in P_K$ and $U_1,\dots,U_n$ are neighborhoods of $0$ in $P_K$.
        Write $c=\tilde{f}_K(y,y)+\gamma$ for $\gamma\in \Lambda_K$.
        Then
        \[sx_i=x_i-y\cdot (\tilde{f}_K(y,y)+\gamma)^{-1}\cdot \tilde{h}_{f_K}(y,x_i)\ .\]
        By Proposition~\ref{PR:multilinear-is-continuous}, $\tilde{f}_K$ and $\tilde{h}_{f_K}$
        are continuous, and hence, by Propositions~\ref{PR:top-K-mod} and~\ref{PR:topological-algebra}, for each $i$, the function
        \begin{equation}\label{EQ:reflection-functions}
        (y,\gamma)\mapsto x_i-y\cdot (\tilde{f}_K(y,y)+\gamma)^{-1}\cdot \tilde{h}_{f_K}(y,x_i)
        \end{equation}
        is continuous wherever defined. Furthermore,  its domain is an open subset of $P_K\times \Lambda_K$ by
        Proposition~\ref{PR:topological-algebra}(ii) (since $\units{A_K}$ is open in $A_K$).
        By Proposition~\ref{PR:top-scalar-ext}, $P\times \Lambda$ is dense in $P_K\times \Lambda_K$.
        Thus, there is $z\in P$ and $\lambda\in\Lambda$ such that $\tilde{f}(z,z)+\lambda\in\units{A_K}$ and
        \[
        \left[
        x_i-y\cdot (\tilde{f}_K(y,y)+\gamma)^{-1}\cdot \tilde{h}_{f_K}(y,x_i)
        \right]-
        \left[
        x_i-z\cdot (\tilde{f}(z,z)+\lambda)^{-1}\cdot \tilde{h}_f(z,x_i)
        \right]\in U_i
        \]
        for all $i$. Write $d=\tilde{f}(z,z)+\lambda$. Then $d\in A\cap \units{A_K}=\units{A}$ (Proposition~\ref{PR:density-of-units}, condition (2)),
        and hence $s_{z,d}$ is a reflection of $[f]$. Since
        $sx_i-s_{z,d}x_i\in U_i$
        for all $i$, we are done.
    \end{proof}

    \begin{remark}
        In the setting of Theorem~\ref{TH:density}:
        \begin{enumerate}
        \item[(i)] The condition $F\cap \units{K}=\units{F}$  always holds when
        $F$ is a field or, more generally, an artinian ring (cf.~Remark~\ref{RM:rationally-closed}).
        \item[(ii)] The condition that $\Lambda$ is an $F$-summand of $A$ holds when $2\in\units{A}$.
        Indeed, in this case,
        it is easy to check that $\Lambda=\Lambda^{\min}(u)$ and $A=\Lambda^{\min}(u)\oplus \Lambda^{\min}(-u)$.
        \end{enumerate}
    \end{remark}

    \begin{remark}\label{RM:affine-grp-scheme}
        Condition (3) of Theorem~\ref{TH:density}
        implies that
        $\uO([f])$ is defined an affine $F$-scheme (see the end of Section~\ref{section:topology}).
        At this level or generality, we do not know
        whether $\Delta_{\calI(P_K),[f_K]}:O([f_K])\to (\Z/2\Z)^{\calI(P_K)}$
        is induced from a morphism  of affine group schemes  over $K$. However, this is true when $K$
        is a field:
        Let $A_i$ be a split-orthogonal factor of the semisimple unitary ring $A_K/\Jac(A_K)$.
        The map $O([f_K])\to O([(f_K)_i])$ is a restriction of the
        standard map $\End_{A_K}(P_K)\to\End_{A_i}(P_i)$, which is $K$-linear.
        Thus, $O([f_K])\to O([(f_K)_i])$ is induced by a morphism of affine groups schemes over $K$
        (cf.\ Proposition~\ref{PR:realizing-projective-mods}).
        Since $\Delta_{[(f_K)_i]}:O([(f_K)_i])\to \Z/2\Z$ is a morphism of algebraic groups
        over $K$ (see~\ref{subsection:Dickson-inv}), it follows that
        $\Delta_{[f_K],i}:O([f_K])\to \Z/2\Z$ is induced by a morphism of affine groups schemes over $K$.
        Letting $i$ range over $\calI(P_K)$, we see that this also holds for $\Delta_{\calI(P_K)}:O([f_K])\to(\Z/2\Z)^{\calI(P_K)}$,
        where $\Z/2\Z$ is viewed as a constant group scheme over $K$.

        This argument also works when $K$ is a product of  fields $K_1\times\dots \times K_r$.
        However, in this case, $(\Z/2\Z)^{\calI(P_K)}$ should be realized as $\prod_{j=1}^r(\Z/2\Z)^{\calI(P_{K_j})}$
        where $(\Z/2\Z)^{\calI(P_{K_j})}$ is a constant group scheme over $K_j$.
    \end{remark}

    The following corollary shows that weak approximation holds for the \emph{connected component}
    of $\uO([f])$ when $F$ is a field (see \cite[\S6.7]{Water79IntroAffineGrpSchemes} for the definition).

    \begin{cor}\label{CR:weak-approx-over-fields}
        Keep the setting of Theorem~\ref{TH:density}, and assume further that $F$ is a field
        and $K$ is a product of finitely many fields.
        Let $\uO^+:=\uO([f])^+$ be the connected component of $\uO:=\uO([f])$.
        Give
        $\uO^+(K)$ the topology of Proposition~\ref{PR:topologizing-schemes}.
        Then
        $
        \uO^+(F)$
        is dense in $\uO^+(K)
        $.
    \end{cor}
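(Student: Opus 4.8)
The plan is to deduce the corollary from Theorem~\ref{TH:density} by exploiting that the identity component $\uO^+$ is an open (and closed) subscheme of $\uO=\uO([f])$. First I would record a few scheme-theoretic facts. By condition~(3) (see the end of Section~\ref{section:topology}), $\uO$ is an affine group scheme of finite type over the field $F$, hence $\uO^+$ is a clopen subscheme and is geometrically connected (it is the identity component). The composite closed immersion $\uO^+\hookrightarrow\uO\hookrightarrow\underline{\End_A(P)}$ base-changes to a closed immersion $\uO^+_K\hookrightarrow\underline{\End_{A_K}(P_K)}$, so by Proposition~\ref{PR:topologizing-schemes}(2) the topology of Proposition~\ref{PR:topologizing-schemes} on $\uO^+(K)$ is the subspace topology from $O([f_K])$ used in Theorem~\ref{TH:density}. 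Writing $K=K_1\times\cdots\times K_r$, $O([f_K])=\prod_jO([f_{K_j}])$, and $\uO=\uO^+\sqcup\uO^-$ with $\uO^-$ the complementary clopen subscheme, the connectedness of each $\Spec K_j$ gives $\uO(K_j)=\uO^+(K_j)\sqcup\uO^-(K_j)$ with both parts closed (Proposition~\ref{PR:topologizing-schemes}(2)), so $\uO^+(K)=\prod_j\uO^+(K_j)$ is clopen in $O([f_K])$. Finally, since $\uO^+$ is open in $\uO$, an $F$-point of $\uO$ factors through $\uO^+$ iff its image point in $|\uO|$ lies in $|\uO^+|$, and this point is unchanged upon composing with $\Spec K\to\Spec F$; comparing with the analogous criterion over $K$ gives
\[
\uO^+(F)=O([f])\cap\uO^+(K)\qquad\text{inside }O([f_K]).
\]

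Next I would show $\uO^+(K)\subseteq\ker\Delta_{\calI(P_K),[f_K]}$. Since $A_K=\prod_jA_{K_j}$, one has $\calI(P_K)=\coprod_j\calI(P_{K_j})$ and $\ker\Delta_{\calI(P_K),[f_K]}=\prod_j\ker\Delta_{\calI(P_{K_j}),[f_{K_j}]}$, so it is enough to treat one field $K_j$. By Remark~\ref{RM:affine-grp-scheme}, $\Delta_{\calI(P_{K_j}),[f_{K_j}]}$ is induced by a morphism of affine $K_j$-group schemes $\uO_{K_j}\to(\Z/2\Z)^{\calI(P_{K_j})}$ whose target is a constant (hence \'etale) group scheme; since $\uO^+\times_FK_j$ is connected (geometric connectedness of $\uO^+$) and contains the identity section, its image under this morphism is the identity section, whence $\uO^+(K_j)\subseteq\ker\Delta_{\calI(P_{K_j}),[f_{K_j}]}$. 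Taking products and invoking Theorem~\ref{TH:density},
\[
\uO^+(K)\ \subseteq\ \ker\Delta_{\calI(P_K),[f_K]}\ \subseteq\ \overline{O([f])}.
\]

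The corollary now follows from a short point-set argument. Fix $x\in\uO^+(K)$ and a neighborhood $V$ of $x$ in $O([f_K])$. Since $\uO^+(K)$ is open, $V\cap\uO^+(K)$ is again a neighborhood of $x$; as $x\in\overline{O([f])}$ it meets $O([f])$, and any common point lies in $\uO^+(K)\cap O([f])=\uO^+(F)$. Hence every neighborhood of $x$ meets $\uO^+(F)$, i.e.\ $\uO^+(K)\subseteq\overline{\uO^+(F)}$, so $\uO^+(F)$ is dense in $\uO^+(K)$; since $\uO^+(K)$ is also closed in $O([f_K])$, the closure of $\uO^+(F)$ there equals $\uO^+(K)$ exactly.

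I expect the main obstacle to be the reduction itself rather than any single computation: one cannot simply rerun the pseudo-reflection approximation of Theorem~\ref{TH:density} inside $\uO^+$, because a pseudo-reflection of $[f]$ need not lie in $\uO^+(F)$ --- for instance, for an odd-dimensional split quadratic space one has $O'([f])=O([f])\supsetneq\uO^+(F)$. The real content is therefore to combine Theorem~\ref{TH:density} with the two facts that $\uO^+(K)$ is clopen in $O([f_K])$ and that $\uO^+(F)=O([f])\cap\uO^+(K)$, which together transfer density from $O([f])$ to its identity component; the hypothesis that $K$ is a product of fields enters only through the factorwise bookkeeping of the second step (and through Remark~\ref{RM:affine-grp-scheme}).
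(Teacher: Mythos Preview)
Your proof is correct and follows essentially the same route as the paper: both show that $\uO^+(K)$ is clopen in $O([f_K])$, that $\uO^+(K)\subseteq\ker\Delta_{\calI(P_K)}$ (via the factorization of $\Delta$ through an \'etale quotient), and then conclude by intersecting $\overline{O([f])}$ with the clopen set $\uO^+(K)$. The only cosmetic difference is that the paper packages the clopenness and the factorization using $\pi_0(\uO)$ and \cite[Th.~6.7]{Water79IntroAffineGrpSchemes}, whereas you argue directly from the clopen decomposition $\uO=\uO^+\sqcup\uO^-$ and the geometric connectedness of $\uO^+$.
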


    \begin{proof}
        We will use of the algebraic group $\pi_0(\uO)$; see \cite[\S6.5]{Water79IntroAffineGrpSchemes} for its definition. The exact sequence
        of algebraic groups $1\to \uO^+\to \uO\to \pi_0(\uO)\to 1$ gives rise to an exact
        sequence $1\to \uO^+(K)\to \uO(K)\to \pi_0(\uO)(K)$ and all
        the morphisms are continuous by Proposition~\ref{PR:topologizing-schemes}.
        Since $\pi_0(\uO)(K)$ is finite and $K$ is Hausdorff, $\pi_0(\uO)(K)$ is discrete, and hence $\uO^+(K)$
        is open and closed in $\uO(K)$.

        By Remark~\ref{RM:affine-grp-scheme}, the map $\Delta_{\calI(P_K)}$ is induced
        from a homomorphism of algebraic groups $\underline{\Delta}:\uO\to(\Z/2\Z)^{\calI(P_K)}$.
        Furthermore, $(\Z/2\Z)^{\calI(P_K)}$ is \'{e}tale over $\Spec K$, and hence  $\underline{\Delta}$
        factors through $\uO\to\pi_0(\uO)$ \cite[Th.~6.7]{Water79IntroAffineGrpSchemes}.
        It follows that $\uO^+(K)\subseteq \ker\Delta_{\calI(P_K)}$.
        By Theorem~\ref{TH:density}, this implies $\uO^+(K)\subseteq \overline{\uO(F)}$,
        and since $\uO^+(K)$ is open and closed in $\uO(K)$,
        we get $\uO^+(K)=\overline{\uO(F)\cap \uO^+(K)}=\overline{\uO^+(F)}$.
    \end{proof}

\section{A Double Coset Argument}
\label{section:double-cosets}

    Given a commutative ring  $R$, a family of commutative $R$-algebras
    $\calL$, and a  quadratic space $(P,[f])$ over a unitary $R$-algebra,
    we let the \emph{$\calL$-genus} of $(P,[f])$,
    denoted $\gen_{\calL}(P,[f])$,
    be the collection of isomorphism
    classes of  quadratic spaces $(P',[f'])$ that become isometric to
    $(P,[f])$ after applying $\scalarExt{L}{R}$ for every $L\in\calL$. This generalizes
    the  genus considered in the introduction.

    As preparation for the patching theorem of the next section, this section relates the $\calL$-genus of
    $(P,[f])$ to double cosets in a certain group, in case $\calL$ consists of two
    algebras satisfying certain assumptions.
    The argument  resembles some descent methods used in number theory
    and algebraic geometry, which generalize much beyond the scope of quadratic spaces.
    We comment about this  in detail in Remark~\ref{RM:double-cosets}.

\medskip

    Recall that a commutative square of abelian groups
    \begin{equation}\label{EQ:set-cart-square}
    \xymatrix{
    B \ar[r]|\phi & D \\
    A \ar[u]|\beta \ar[r]|\gamma & C \ar[u]|\psi
    }
    \end{equation}
    is \emph{cartesian} if $A$ is the pullback of $\phi$ and $\psi$. Namely, for all
    $b\in B$, $c\in C$ with $\phi b=\psi c$, there exists unique $a\in A$ with $\beta a=b$ and $\gamma a=c$.
    We  say that the square is \emph{onto} if $\phi(B)+\psi(C)=D$.

    The
    properties just defined can  be explained via exactness of the following sequence
    \begin{equation*}
    0\to A\xrightarrow{a\mapsto (\beta a\oplus\gamma a)} B\oplus C\xrightarrow{(b\oplus c) \mapsto (\phi b-\psi c)} D\to 0\ .
    \end{equation*}
    Namely, the square \eqref{EQ:set-cart-square}
    is cartesian if the sequence is exact on the left and on the middle,
    and onto if it is exact on the right.

\medskip

    Throughout, we fix a cartesian and onto square of commutative rings
    \begin{equation}\label{EQ:rings-cart-square}
    \xymatrix{
    S \ar[r] & K \\
    R \ar[u] \ar[r] & F \ar[u]
    }
    \end{equation}
    In addition, $(A,\sigma,u,\Lambda)$ is  a unitary $R$-algebra such that $A$ is flat as an $R$-module.

    We shall use the notation of~\ref{subsection:unitary-algs} for scalar extension of modules, homomorphisms, quadratic forms, etcetera.
    Furthermore, for brevity,  for all $P\in\rproj{A}$, we set
    \[
    \calS(P):=\Hom(P,P^*)\qquad\text{and}\qquad \calQ(P)=\Hom(P,P^*)/\Lambda_P\ .
    \]
    Similar notation will be used for modules over $A_S$, $A_F$ and $A_K$.
    Recall from~\ref{subsection:unitary-algs} that we have  scalar extension maps
    \[f\mapsto f_S:\calS(P)\to \calS(P_S),\qquad [f]\mapsto [f_S]:\calQ(P)\to \calQ(P_S)\]
    and likewise for any pair of the rings $R,S,F,K$ connected by a homomorphism.

    \begin{lem}\label{LM:cart-square-I}
        For any flat $R$-module $M$, the following square is cartesian and onto
        \begin{equation}\label{EQ:module-cart-square}
        \xymatrix{
        M_S \ar[r] & M_K \\
        M \ar[u] \ar[r] & M_F \ar[u]
        }
        \end{equation}
    \end{lem}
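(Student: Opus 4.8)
The plan is to deduce the module statement from the ring statement by a single application of flatness. The hypothesis that the square \eqref{EQ:rings-cart-square} is cartesian and onto means exactly, by the criterion recalled just before that diagram, that the sequence of $R$-modules
\[
0 \longrightarrow R \xrightarrow{\ r\mapsto (r,r)\ } S\oplus F \xrightarrow{\ (s,t)\mapsto \bar{s}-\bar{t}\ } K \longrightarrow 0
\]
is exact, where $\bar{s}$ and $\bar{t}$ denote the images of $s\in S$ and $t\in F$ in $K$. First I would tensor this sequence with $M$ over $R$. Since $M$ is flat, the functor $-\otimes_R M$ is exact, so the resulting sequence is still exact in all three places.

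Next I would identify the three terms of the tensored sequence with the objects appearing in \eqref{EQ:module-cart-square}: one has canonical isomorphisms $R\otimes_R M\cong M$, $(S\oplus F)\otimes_R M\cong M_S\oplus M_F$, and $K\otimes_R M\cong M_K$, and a direct check from the definitions in~\ref{subsection:unitary-algs} shows that under these identifications the two maps of the sequence become $x\mapsto (x_S,x_F)$ and $(y,z)\mapsto y_K-z_K$ --- this is just functoriality of scalar extension together with the fact that it commutes with the structure maps of the square. Thus I obtain an exact sequence
\[
0 \longrightarrow M \longrightarrow M_S\oplus M_F \longrightarrow M_K \longrightarrow 0,
\]
and applying the same criterion in the other direction shows that \eqref{EQ:module-cart-square} is cartesian (from exactness on the left and in the middle) and onto (from exactness on the right).

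I do not expect a real obstacle: the argument is a routine flatness manipulation, and the only points needing a moment's attention are bookkeeping --- verifying that the maps in the tensored sequence are literally the scalar-extension maps, and being clear about where flatness enters. It is worth noting that flatness is genuinely needed only for the injectivity of $M\to M_S\oplus M_F$ (i.e.\ for the cartesian property); right-exactness of $-\otimes_R M$ alone already gives exactness in the middle and on the right, hence the ``onto'' half and half of ``cartesian'', for an arbitrary $R$-module $M$.
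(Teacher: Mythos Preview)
Your proof is correct and follows exactly the paper's approach: translate the cartesian-and-onto square of rings into a short exact sequence, tensor with the flat module $M$, and translate back. Your additional remark about which parts survive without flatness is accurate and a nice bonus, though not needed for the lemma as stated.
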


    \begin{proof}
        As explained above, the square \eqref{EQ:rings-cart-square} gives rise to an exact sequence
        \begin{equation}\label{EQ:cartesion-exact-sequence}
        0\to R\xrightarrow{~} S\oplus K\xrightarrow{~} F\to 0
        \end{equation}
        The lemma follows by tensoring with $M$, which preserves exactness since $M$ is flat.
    \end{proof}

    \begin{lem}\label{LM:cart-square-II}
        Let $P,P'\in\rproj{A}$. Consider the following squares induced by \eqref{EQ:rings-cart-square}:
        \[
        \xymatrixcolsep{1pc}\xymatrix{
        \Hom_{A_S}(P_S,P'_S) \ar[r] & \Hom_{A_K}(P_K,P'_K) \\
        \Hom_{A}(P,P') \ar[u] \ar[r] & \Hom_{A_F}(P_F,P'_F) \ar[u]
        }
        \quad
        \xymatrix{
        \calS(P_S) \ar[r] & \calS(P_K) \\
        \calS(P) \ar[u] \ar[r] & \calS(P_F) \ar[u]
        }
        \quad
        \xymatrix{
        \calQ(P_S) \ar[r] & \calQ(P_K) \\
        \calQ(P) \ar[u] \ar[r] & \calQ(P_F) \ar[u]
        }
        \]
        Then:
        \begin{enumerate}
            \item[{\rm(i)}] The left and middle squares are  cartesian and onto.
            Furthermore, if $\psi\in\Hom_{A}(P,P')$ is such that $\psi_F$ and $\psi_S$
            are invertible, then $\psi$ is invertible.
            \item[{\rm(ii)}] Provided $K$ is flat as an $R$-module, the right square is cartesian and onto.
        \end{enumerate}
    \end{lem}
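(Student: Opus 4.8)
The plan is to reduce everything to Lemma~\ref{LM:cart-square-I} by identifying each of the three Hom/$\calS$/$\calQ$ groups with $M_S$, $M_K$, $M$, $M_F$ for a suitable flat $R$-module $M$, in a way compatible with the scalar-extension maps. For part (i), set $M=\Hom_A(P,P')$; since $A$ is flat over $R$ and $P,P'$ are finitely generated projective over $A$, the module $M$ is a direct summand of $\Hom_A(A^n,A^m)\cong A^{nm}$, hence flat over $R$. By Lemma~\ref{LM:natural-isomorphism} (applied with $F$ there playing the role of $R$, and with base changes to $S$, $F$, $K$), the natural map $M_S\to\Hom_{A_S}(P_S,P'_S)$ is an isomorphism, and similarly for $F$ and $K$; moreover these isomorphisms are compatible with the ring homomorphisms in \eqref{EQ:rings-cart-square}, so the left square is identified with the square \eqref{EQ:module-cart-square} for this $M$, which is cartesian and onto by Lemma~\ref{LM:cart-square-I}. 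The middle square is the special case $P'=P^*$ — here one uses that $(P^*)_S\cong(P_S)^*$ etc.\ from Lemma~\ref{LM:natural-isomorphism}, so that $\calS(P)=\Hom_A(P,P^*)$ base-changes correctly — and is cartesian and onto for the same reason.

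For the invertibility claim in (i): given $\psi\in\Hom_A(P,P')$ with $\psi_F$ and $\psi_S$ invertible, I want to produce a two-sided inverse in $\Hom_A(P',P)$. Apply the cartesian property of the left square \emph{with the roles of $P$ and $P'$ swapped}: the inverses $(\psi_S)^{-1}\in\Hom_{A_S}(P'_S,P_S)$ and $(\psi_F)^{-1}\in\Hom_{A_F}(P'_F,P_F)$ agree after base change to $K$ (both equal $(\psi_K)^{-1}$, since $\psi_K$ is invertible as the base change of $\psi_S$), so by the cartesian property there is a unique $\eta\in\Hom_A(P',P)$ with $\eta_S=(\psi_S)^{-1}$ and $\eta_F=(\psi_F)^{-1}$. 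Then $\eta\psi\in\End_A(P)$ has $(\eta\psi)_S=\id$ and $(\eta\psi)_F=\id$, and $\id_P$ also maps to $\id$ in both; by the \emph{uniqueness} in the cartesian property of the left square with $P'=P$, we get $\eta\psi=\id_P$, and symmetrically $\psi\eta=\id_{P'}$.

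For part (ii), consider the commutative diagram whose rows are the short exact sequences $0\to\Lambda_P\to\calS(P)\to\calQ(P)\to 0$ and their base changes to $S$, $F$, $K$; by Proposition~\ref{PR:scalar-ext-for-quad-forms} (with $R$ in the role of $F$ there), the base-change maps on $\calS$ are isomorphisms $\calS(P)_?\cong\calS(P_?)$ and those on $\calQ$ are isomorphisms $\calQ(P)_?\cong\calQ(P_?)$, provided $S$, $F$, $K$ are flat over $R$; $S$ and $F$ are flat since \eqref{EQ:cartesion-exact-sequence} exhibits them inside the flat module $R$ via the exact sequence (more directly, $F$ is a base change appearing in a cartesian square with $R$ flat over itself and $S,K$ flat — but in fact flatness of $S$ and $F$ over $R$ is part of the standing hypotheses coming from the applications, and $K$ flat over $R$ is the explicit extra assumption). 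Hence the right square is identified with the square \eqref{EQ:module-cart-square} for the flat module $M=\calQ(P)$, and Lemma~\ref{LM:cart-square-I} finishes it. The main obstacle is purely bookkeeping: checking that all the identifications from Lemmas~\ref{LM:natural-isomorphism} and Proposition~\ref{PR:scalar-ext-for-quad-forms} are genuinely natural in the ring variable, so that the four corners of each square are matched up with $M,M_S,M_F,M_K$ compatibly with the maps; once that is in place the cartesian-and-onto conclusion is immediate from Lemma~\ref{LM:cart-square-I}.
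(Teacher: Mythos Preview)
Your argument for part (i), including the invertibility claim, is correct and essentially identical to the paper's.

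Your argument for part (ii) has a genuine gap. You conclude by applying Lemma~\ref{LM:cart-square-I} to ``the flat module $M=\calQ(P)$'', but you never establish that $\calQ(P)=\calS(P)/\Lambda_P$ is flat over $R$, and in the standing hypotheses of this section (only $A$ flat over $R$, plus $K$ flat for this part) there is no reason it should be: quotients of flat modules are not flat in general, and $\Lambda_P$ is not assumed to be a summand of $\calS(P)$. Your aside about $S$ and $F$ being flat is a red herring---Proposition~\ref{PR:scalar-ext-for-quad-forms} gives the identification $\calQ(P)_T\cong\calQ(P_T)$ for \emph{any} $R$-algebra $T$, no flatness needed---and in any case flatness of the four rings does nothing to make $\calQ(P)$ itself flat.

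The paper does not attempt to show $\calQ(P)$ is flat. Instead it tensors the exact sequence $0\to\Lambda_P\to\calS(P)\to\calQ(P)\to 0$ with the exact sequence $0\to R\to S\oplus F\to K\to 0$ to obtain a $3\times 3$ grid, and does a diagram chase to prove directly that $\calQ(P)\to\calQ(P)_S\oplus\calQ(P)_F$ is injective. The inputs to the chase are: exactness of the $K$-row (this is exactly where the hypothesis that $K$ is flat enters), exactness of the $\calS(P)$-column at the top (this is part (i), since $\calS(P)$ is flat), and right-exactness of tensoring for the rest. The ``onto'' and middle-exactness parts of the $\calQ$-square come for free from right-exactness, so injectivity is the only nontrivial point---and that is precisely what your appeal to Lemma~\ref{LM:cart-square-I} cannot deliver without flatness of $\calQ(P)$.
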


    \begin{proof}
        (i) By Lemma~\ref{LM:natural-isomorphism}, the middle square is  a special case
        of the left square. This lemma also allows us to identify $\Hom_{A_T}(P_T,P'_T)$ with $\Hom_A(P,P')_T$
        for any commutative $R$-algebra $T$. Now, by Lemma~\ref{LM:cart-square-I}, in order to prove that
        the left square is cartesian and onto,
        it is enough to show that $\Hom_A(P,P')$ is a flat $R$-module.
        Indeed, $\Hom_A(P,P')$ is a summand of $\Hom_A(A^n,A^m)\cong A^{nm}$ (as $R$-modules)
        for some $n,m\in \N$, and $A$ is flat by assumption.

        Next, assume $\psi\in\Hom_{A}(P,P')$ is such that $\psi_F$ and $\psi_S$
        are invertible. Then in $\Hom_{A_K}(P'_K,P_K)$ we have $(\psi_S^{-1})_K=(\psi_F^{-1})_K$,
        hence there exists $\phi\in\Hom_A(P',P)$ with $\phi_S=\psi_S^{-1}$ and $\phi_F=\psi_F^{-1}$.
        We clearly have $(\phi\psi-\id_P)_S=0$ and $(\phi\psi-\id_P)_F=0$, so by cartesianity of the left
        square (in case $P=P'$), we have $\phi\psi=\id_P$. Likewise, $\psi\phi=\id_{P'}$, so $\psi$ is invertible.

        (ii) By
        Proposition~\ref{PR:scalar-ext-for-quad-forms},
        we may identify $\calS(P_T)$ with $\calS(P)_T$ and $\calQ(P_T)$ with $\calQ(P)_T$ for any $R$-algebra $T$.
        Consider the following diagram, which is obtained
        by tensoring the top row with \eqref{EQ:cartesion-exact-sequence}:
        \[
        \xymatrix{
        0 \ar[r] & \Lambda_P \ar[r]\ar[d] & \calS(P) \ar[r]\ar[d]^{\eta} & \calQ(P) \ar[r]\ar[d]^\psi & 0 \\
        &  (\Lambda_P)_S\oplus (\Lambda_P)_F \ar[r]\ar[d] & \calS(P)_S\oplus \calS(P)_F \ar[r]\ar[d] & \calQ(P)_S\oplus \calQ(P)_F \ar[r]\ar[d] & 0 \\
        0 \ar[r] & (\Lambda_P)_K \ar[r] & \calS(P)_K \ar[r] &  \calQ(P)_K \ar[r] & 0
        }
        \]
        The first two rows are clearly exact, and the third row is exact since $K$ is flat. In addition,
        all columns are exact in the middle and  on the bottom (once adding a zero object), and by (i),
        $\eta$ is injective. We only need to prove that $\psi$ is injective, and this follows by standard diagram chasing.
        (Specifically, assume $a\in\calQ(P)$ satisfies $\psi a=0$. Let $b\in\calS(P)$ be an inverse image of $a$
        and let $c=\eta b$. Then the image of $c$ in $\calQ(P)_S\oplus \calQ(P)_F$ is $\psi a=0$,
        hence $c$ has an inverse image $d\in(\Lambda_P)_S\oplus (\Lambda_P)_F$. The image of $d$ in $\calS(P)_K$
        is the image of $c=\eta b$ in $\calS(P)_K$, which is $0$. Thus, the image of $d$ in $(\Lambda_P)_K$ is $0$
        (since the third row is exact on the left). Let $e\in \Lambda_P$ be an inverse image of $d$, and let
        $f$ be the image of $e$ in $\calS(P)$. Then $\eta f=c=\eta b$, so $f=b$ (since $\eta$ is injective).
        This means that $a$ is the image of $e\in\Lambda_P$, and hence equals $0$.)
    \end{proof}

    We remark that part (i) of the lemma implies the following corollary.

    \begin{cor}\label{CR:unimodularity-in-genus}
        Let $(P,[f])$ be a quadratic space over $(A,\sigma,u,\Lambda)$
        and let $(P',[f'])\in\gen_{S,F}(P,[f])$. If $(P,[f])$ is unimodular, then so is
        $(P',[f'])$.
    \end{cor}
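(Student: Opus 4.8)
\medskip

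The plan is to deduce the statement from the last assertion of Lemma~\ref{LM:cart-square-II}(i): a morphism in $\Hom_A(P',Q)$, with $P',Q\in\rproj{A}$, is invertible as soon as its scalar extensions to $F$ and to $S$ are. Applying this with $Q=(P')^*$ and with the $u$-hermitian form $h_{f'}\colon P'\to (P')^*$ attached to $[f']$, it suffices to prove that $(h_{f'})_F$ and $(h_{f'})_S$ are isomorphisms. (Recall that $A$ is flat over $R$ throughout this section, so Lemma~\ref{LM:cart-square-II} applies; note also that part (i) does not require $K$ to be flat.)

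First I would observe that forming $h_{f'}$ commutes with scalar extension: since $h_{f'}=f'+(f')^*\omega_{P'}$, and the contravariant functor $*$ and the natural isomorphisms $\omega$ are compatible with $\scalarExt{T}{R}$ (by Lemma~\ref{LM:natural-isomorphism}; this is also implicit in the construction of $\beta$ in Proposition~\ref{PR:scalar-ext-for-quad-forms}), one gets $(h_{f'})_T=h_{f'_T}$ for $T\in\{S,F\}$. Next, by the definition of $\gen_{S,F}$, there are isometries $(P'_S,[f'_S])\cong (P_S,[f_S])$ and $(P'_F,[f'_F])\cong (P_F,[f_F])$. Such an isometry is by definition an isomorphism $\phi$ of the underlying modules, and the equivalent condition \eqref{EQ:isometry-equiv-cond} translates into $\phi^* h_{f_T}\phi=h_{f'_T}$ for $T\in\{S,F\}$ (replacing $\phi$ by its inverse if the isometry is oriented the other way). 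Since $\phi$ and $\phi^*$ are isomorphisms, $h_{f'_T}$ is an isomorphism whenever $h_{f_T}$ is.

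Finally, since $(P,[f])$ is unimodular, $h_f$ is an isomorphism, hence $h_{f_T}=(h_f)_T$ is an isomorphism for $T\in\{S,F\}$ by functoriality of $\scalarExt{T}{R}$. Combining the last two paragraphs, $(h_{f'})_F=h_{f'_F}$ and $(h_{f'})_S=h_{f'_S}$ are isomorphisms, and the last assertion of Lemma~\ref{LM:cart-square-II}(i) then shows that $h_{f'}$ is an isomorphism; that is, $(P',[f'])$ is unimodular. I do not expect any real obstacle here: all the substance is in Lemma~\ref{LM:cart-square-II}(i), which has already been established, and the remaining steps are the routine facts that unimodularity, the formation of $h_f$, and scalar extension interact well with one another and with isometry.
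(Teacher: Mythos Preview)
Your proof is correct and follows essentially the same approach as the paper: both reduce to the last assertion of Lemma~\ref{LM:cart-square-II}(i) applied to $h_{f'}$, after observing that $(h_{f'})_S$ and $(h_{f'})_F$ are invertible because they are conjugate (via the genus isometries) to the scalar extensions of the invertible $h_f$. Your write-up simply makes explicit the compatibilities (e.g.\ $(h_{f'})_T=h_{f'_T}$) that the paper leaves implicit.
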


    \begin{proof}
        Write $h=h_f$ and $h'=h_{f'}$. Then both $h_S$ and $h_F$ are invertible,
        hence $h'_S$ and $h'_F$ are invertible (because $(P',[f'])\in\gen_{S,F}(P,[f])$).
        Thus, by Lemma~\ref{LM:cart-square-II}(i), $h'$ is invertible.
    \end{proof}

    \begin{notation}\label{NT:patching-groups}
        Let $(P,[f])$ be a quadratic space over $(A,\sigma,u,\Lambda)$. We set:
        \begin{align*}
        O_S&= \{\phi_K\where \phi\in O([f_S])\}, & G_S&=\{\phi_K\where \phi\in\units{\End_{A_S}(P_S)}\}, \\
        O_F&= \{\phi_K\where \phi\in O([f_F])\}, & G_F&=\{\phi_K\where \phi\in\units{\End_{A_F}(P_F)}\}, \\
        O_K&= O(P_K,[f_K]), & G_K&=\units{\End_{A_K}(P_K)}.
        \end{align*}
    \end{notation}

    \begin{thm}\label{TH:patching-general}
        Let $(P,[f])$ be a quadratic space over $(A,\sigma,u,\Lambda)$,
        and
        assume that $K$ is a flat $R$-module.
        Then, in the notation of~\ref{NT:patching-groups},
        there exists an injection
        \[
        \Phi:\gen_{S,F}(P,[f])\to O_S\!\setminus\! O_K/O_F,\
        \]
        constructed in the proof.
        When $O_K\subseteq G_SG_F$, the map $\Phi$ is bijective and
        every $(P',[f'])\in \gen_{S,F}(P,[f])$ satisfies $P'\cong P$.
    \end{thm}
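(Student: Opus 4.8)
The statement is a descent/patching result: $\calL$-genus classes correspond to double cosets. I would construct $\Phi$ by a ``twisting'' argument. Given $(P',[f'])\in\gen_{S,F}(P,[f])$, unimodularity is automatic by Corollary~\ref{CR:unimodularity-in-genus}, and by Lemma~\ref{LM:cart-square-II}(i) (applied to the pair of rings, with the proof that $\Hom_A(P,P')$ is flat) one can patch the chosen isometries $(P'_S,[f'_S])\cong (P_S,[f_S])$ and $(P'_F,[f'_F])\cong(P_F,[f_F])$: but they need not be compatible over $K$, so their comparison over $K$ is an element $g\in O([f_K])=O_K$, well-defined up to left multiplication by $O_S$ (change of the $S$-isometry) and right multiplication by $O_F$ (change of the $F$-isometry). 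This gives a well-defined map $\Phi$ to $O_S\backslash O_K/O_F$ independent of all choices, and it sends the class of $(P,[f])$ to the trivial coset.

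\textbf{Injectivity.} If two pairs $(P',[f'])$ and $(P'',[f''])$ give the same double coset, then after adjusting the $S$- and $F$-isometries their $K$-comparisons agree, so the two isometries $P'_S\cong P''_S$ and $P'_F\cong P''_F$ (obtained by composing) become equal over $K$; by the cartesian square of Lemma~\ref{LM:cart-square-II}(i) they glue to an isomorphism $P'\cong P''$, and by Lemma~\ref{LM:cart-square-II}(ii) (here using $K$ flat over $R$) the glued map is an isometry $(P',[f'])\cong(P'',[f''])$ over $A$. So $\Phi$ is injective.

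\textbf{Surjectivity and $P'\cong P$ when $O_K\subseteq G_SG_F$.} Given an arbitrary $g\in O_K$, I want to build a quadratic space over $A$ mapping to its class. Writing $g=\phi_K\psi_K^{-1}$ for some $\phi\in\units{\End_{A_S}(P_S)}$ and $\psi\in\units{\End_{A_F}(P_F)}$ (possible by hypothesis), one uses these units to patch the module: by Lemma~\ref{LM:cart-square-II}(i), the data $(P_S,P_F,\phi\cdot\psi^{-1}\text{-type gluing over }P_K)$ — more precisely, gluing $P_S$ and $P_F$ along the $A_K$-isomorphism $\psi_K\phi_K^{-1}$ (or $g$ itself, up to bookkeeping) — produces an $A$-module $P'$ which is a summand of a free module and satisfies $P'_S\cong P_S$, $P'_F\cong P_F$, hence $P'\in\rproj{A}$ and $P'\cong P$ over $A$ since the gluing unit lies in $G_SG_F$ (so the glued module is isomorphic to the trivially glued one, which is $P$). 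Transporting $[f_S]$ and $[f_F]$ through these identifications, their comparison over $K$ is exactly $g$, which lies in $O([f_K])$, so by the cartesian square for $\calQ$ (Lemma~\ref{LM:cart-square-II}(ii), again using $K$ flat) the forms $[f_S]$ and $[f_F]$ glue to a class $[f']\in\calQ(P')$ with $[f'_S]=[f_S]$, $[f'_F]=[f_F]$. Thus $(P',[f'])\in\gen_{S,F}(P,[f])$ and $\Phi([(P',[f')])$ is the class of $g$. Since $P'\cong P$ for every such $g$, and every genus class arises this way once $\Phi$ is onto, every $(P',[f'])\in\gen_{S,F}(P,[f])$ has $P'\cong P$.

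\textbf{Main obstacle.} The delicate point is the interplay between the two cartesian-square lemmas: Lemma~\ref{LM:cart-square-II}(i) handles the module-level gluing (no flatness of $K$ needed), but to glue the \emph{quadratic forms} one needs part (ii), which requires $K$ flat over $R$ — hence the hypothesis. One must also be careful that the module produced by gluing is genuinely finitely generated projective over $A$ (not merely that its completions are), which again follows from part (i) applied to a free-module presentation; and in the surjective direction one must check the glued form is unimodular, but that is immediate since its completions are and one applies Lemma~\ref{LM:cart-square-II}(i) to $h_{f'}$. The bookkeeping of which unit glues to which side (so that the resulting double coset is literally the class of $g$ rather than $g^{-1}$ or a conjugate) is routine but must be done consistently.
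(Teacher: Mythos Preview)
Your outline is correct and follows the same overall architecture as the paper: build $\Phi$ from the $K$-comparison of chosen $S$- and $F$-isometries, prove injectivity by gluing via Lemma~\ref{LM:cart-square-II}(i) and then (ii), and prove surjectivity by factoring $g\in O_K$ through $G_SG_F$.

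The one place where the paper is noticeably cleaner is the surjectivity step. You propose to glue a new module $P'$ from $(P_S,P_F,g)$, then argue $P'\cong P$ because $g\in G_SG_F$, then glue the forms on $P'$. This works, but it invokes a module-level patching statement that the paper never sets up (Lemma~\ref{LM:cart-square-II}(i) is only about $\Hom$-sets), so you would need to justify separately that the glued object lies in $\rproj{A}$. The paper sidesteps this entirely: having written $g=\alpha_K\beta_K^{-1}$ with $\alpha\in\units{\End_{A_S}(P_S)}$ and $\beta\in\units{\End_{A_F}(P_F)}$, it stays on $P$ throughout and simply transports the form, setting $[g_S]:=[f_S]\bullet\alpha$ and $[g_F]:=[f_F]\bullet\beta$ (where $[f]\bullet\phi=[\phi^*f\phi]$). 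The condition $\alpha_K\beta_K^{-1}\in O_K$ says exactly that these agree over $K$, so Lemma~\ref{LM:cart-square-II}(ii) glues them to a class $[g]\in\calQ(P)$, and $(P,[g])$ visibly lies in the genus with $\Phi(P,[g])=O_Sg\,O_F$. This both proves surjectivity and shows directly that every double coset is represented by a form on $P$ itself, whence $P'\cong P$ for all genus representatives by injectivity. Your detour through module gluing is not wrong, just unnecessary given the factorization hypothesis.
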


    \begin{proof}
        We shall use the following special notation: If $(Q,[g])$ is a quadratic
        space and $\phi\in\Hom(Q',Q)$, define
        \[
        [g]\bullet \phi=[\phi^* g\phi]\ .
        \]
        We clearly have $[g]\bullet(\phi\psi)=
        ([g]\bullet \phi)\bullet\psi$ when both sides are well-defined.

\medskip

        We first construct $\Phi$. Let $(P',[f'])$ be a representative for an isomorphism
        class in $\gen_{S,F}(P,[f])$. Then there are
        isometries $\phi:(P'_S,[f'_S])\to (P_S,[f_S])$ and $\psi:(P'_F,[f'_F])\to (P_F,[f_F])$, and
        we have
        $\phi_K\psi_K^{-1}\in O_K$.
        Using this, define
        \[
        \Phi(P',[f'])=O_S\phi_K\psi_K^{-1}O_F\ .
        \]
        Observe that $\Phi(P',[f'])$ is independent of the choices of $\phi$ and $\psi$.
        Indeed, if $\theta:(P'_S,[f'_S])\to (P_S,[f_S])$ and $\xi:(P'_F,[f'_F])\to(P_F,[f_F])$ are
        other isometries, then $\phi_K\theta^{-1}_K \in O_S$ and $\psi_K\xi^{-1}_K\in O_F$, hence we get
        \[
        O_S\theta_K\xi_K^{-1}O_F=O_S(\phi_K\theta^{-1}_K)\theta_K\xi_K^{-1}(\psi_K\xi^{-1}_K)^{-1}O_F=
        O_S\phi_K\psi_K^{-1}O_F\ .
        \]
        To see that $\Phi$ is well-defined up to isometry (over $R$), let
        $\eta:(P'',[f''])\to (P',[f'])$ be an isometry. Then
        there are isometries
        $\phi\eta_S: (P''_S,[f''_S])\to (P_S,[f_S])$ and $\psi\eta_F:(P''_F,[f''_F])\to(P_F,[f_F])$. We now have
        \[
        \Phi(P'',[f''])=O_S(\phi\eta_S)_K(\psi \eta_F)^{-1}_KO_F=O_S\phi_K\psi_K^{-1}O_F=\Phi(P',[f']),
        \]
        as required.

        Next, we verify that $\Phi$ is injective. Assume that $\Phi(P',[f'])=\Phi(P'',[f''])$,
        let $\phi,\psi$ be  as above,  and let $\phi':(P''_S,[f''_S])\to (P_S,[f_S])$,
        $\psi':(P''_F,[f''_F])\to (P_F,[f_F])$. Then
        \[
        O_S\phi_K\psi^{-1}_KO_K=O_S\phi'_K\psi'^{-1}_KO_F,
        \]
        so we can write $\phi_K\psi^{-1}_K=\alpha_K\phi'_K\psi'^{-1}_K\beta_K^{-1}$ for
        $\alpha\in O(P_S,[f_S])$ and $\beta\in O(P_F,[f_F])$.
        This implies
        \[
        (\phi^{-1}\alpha\phi')_K=\phi_K^{-1}\alpha_K\phi'_K=\psi^{-1}_K\beta_K\psi'_K=(\psi^{-1}\beta\psi')_K\ .
        \]
        By Lemma~\ref{LM:cart-square-II}(i), there exists an isomorphism $\eta:P''\to P'$ with
        \[
        \eta_S=\phi^{-1}\alpha\phi'\qquad\text{and}\qquad \eta_F=\psi^{-1}\beta\psi'\ .
        \]
        We now have
        \[
        ( [f']\bullet \eta)_S=[f'_S]\bullet (\phi^{-1}\alpha\phi')=
        [f_S]\bullet (\alpha\phi')=
        [f_S]\bullet\phi'=[f''_S]
        \]
        and likewise, $([f']\bullet\eta)_F=[f''_F]$. By Lemma~\ref{LM:cart-square-II}(ii), this means $[f']\bullet\eta=[f'']$,
        so $(P',[f'])\cong (P'',[f''])$.

        Assume now that $O_K\subseteq G_S G_F$. We will show that $\Phi$ is onto, and moreover, that
        every double coset $O_S \eta O_F$ arises from a quadratic form defined on $P$.
        Let $O_S \eta O_F$ be a double coset in $O_K$. Since $O_K\subseteq G_SG_F$, we
        can write $\eta=\alpha_K\beta_K^{-1}$ with $\alpha\in \units{\End_{A_S}(P_S)}$ and
        $\beta\in\units{\End_{A_F}(P_F)}$. Since $\alpha_K\beta^{-1}_K\in O_K$, we have
        \[
        ([f_S] \bullet\alpha)_K=[f_K]\bullet\alpha_K=[f_K]\bullet(\alpha_K\beta^{-1}_K\beta_K)=[f_K]\bullet\beta_K=([f_F] \bullet\beta)_K\ .
        \]
        By Lemma~\ref{LM:cart-square-II}(ii), there exists unique $[g]\in \calQ(P)$ with
        \[
        [g_S]= [f_S] \bullet \alpha\qquad \text{and}\qquad [g_F]=[f_F]\bullet\beta\ .
        \]
        We clearly have $(P,[g])\in\gen_{S,F}(P,[f])$, and $\Phi(P,[g])=O_S\alpha_K\beta_K^{-1}O_F=O_S\eta O_F$ by the definition of $\Phi$,
        as required.
    \end{proof}

    \begin{remark}\label{RM:double-cosets}
		At this level of generality,
		we do not know if the map $\Phi$ is onto.  In \cite[I.\S11]{Kn91}, Knus calls the square \eqref{EQ:rings-cart-square}
        a \emph{patching diagram for quadratic modules} when $\Phi$ is an isomorphism,
        and several other conditions are satisfied.

        One can use descent theory to show that $\Phi$ is an isomorphism in certain cases, even without
        assuming $O_K\subseteq G_S G_F$. This typically requires $S\oplus F$ to be faithfully flat, $K=S\otimes_R F$,
        and one must show that any $\psi\in O_K$, which we view
        as $\psi:[(f_S)_F]\to[(f_F)_S]$  can be completed
        to a \emph{descent data} (i.e.\ a family of isometries $\psi_{T,Z}:[(f_{T})_{Z}]\to
        [(f_{Z})_{T}]$ for  $T,Z\in\{F,S\}$ satisfying the \emph{cocycle condition};
        see \cite[p.~132]{NeronModels}, for instance).
        For certain families
		of squares as in \eqref{EQ:rings-cart-square},
		this problem was studied extensively
		in the literature under the broader context
		of torsors of group schemes.
		See \cite[Pr.~2.6]{CollOjan92} for a
		general result of this kind.
		These methods seem to go back at least as
		far as \cite[Apx.]{FerRay70}.

		We further comment that the double cosets considered
    	in Theorem~\ref{TH:patching-general} are strongly related with Nisnevich cohomology of affine
    	group schemes over Dedekind domains. Specifically, let $R$ be a Dedekind domain, $F$ its fraction field,
    	$S$ the product of its (non-archimedean) completions, and $K$ the ring of (finite) adeles of $R$.
    	Then for any affine group scheme $\mathbf{G}$ over $R$ satisfying certain mild assumptions, the double cosets
        $\mathbf{G}(S)\!\setminus\! \mathbf{G}(K)/\mathbf{G}(F)$
    	are in correspondence with the first Nisnevich cohomology $\mathrm{H}^1_{\mathrm{Nis}}(R,\mathbf{G})$;
    	see \cite[Th.~3.5]{Nisne82} and also \cite[Th.~2.1]{Nisne84}, \cite[Apx.]{Gille02}.
    \end{remark}

    \begin{remark}
    	Theorem~\ref{TH:patching-general}, and also Theorem~\ref{TH:main} below, do not
    	assume that $\uO([f])$ is defined as an affine $R$-scheme. For example, consider the cases
    	\begin{itemize}
    		\item $(A,\sigma,u,\Lambda)=(\smallSMatII{\Z}{\Q}{}{\Z},\smallSMatII{a}{b}{}{d}\mapsto\smallSMatII{d}{b}{}{a},1,\Lambda^{\min}(1))$ or
    		\item $(A,\sigma,u,\Lambda)=(\Z,\id,-1,2\Z)$
    	\end{itemize}
    	with $R=\Z$.
    	Theorem~\ref{TH:patching-general} would apply (with suitable $S$, $F$, $K$), but it is not clear to us whether $\uO([f])$ is
        defined as an affine $R$-scheme.
	\end{remark}

\section{Patching}
\label{section:patching}

    We now state and prove a patching theorem for quadratic spaces.

    \begin{thm}\label{TH:main}
        Let $K$ be a commutative semilocal topological ring,
        let $S,F\subseteq K$ be subrings of $K$,
        let $R=S\cap F$, let $(A,\sigma,u,\Lambda)$ be a unitary
        $R$-algebra, and let $(P,[f])\in\Quad[u,\Lambda]{A,\sigma}$.
        Assume that:
        \begin{enumerate}
            \item[$(0)$] $\F_2$ is not an epimorphic image of $K$,
            \item[$(1)$] $\units{K}$ is open in $K$ and the map $a\mapsto a^{-1}:\units{K}\to \units{K}$
            is continuous,
            \item[$(2)$] $F$ is dense in $K$ and $F\cap\units{K}=\units{F}$,
            \item[$(3)$] $A_F\in\rproj{F}$ and $\Lambda_F$ is a summand of $A_F$,
            \item[$(4)$] $K$ and $A$ are flat $R$-modules,
            \item[$(5)$] $S$ is open in $K$.
        \end{enumerate}
        Let $\calI=\calI(P_K)$ and $\Delta:=\Delta_{\calI,[f_K]}$ (see~\ref{subsection:reflections}, \ref{subsection:dickson-and-transfer}). Then
        \[
        |\gen_{S,F}(P,[f])|=\frac{|\Delta(O([f_K]))|}{\big|\Delta(O([f_F]))+ \Delta(O([f_S]))\big|}\ .
        \]
        In addition, for all $(P',[f'])\in\gen_{S,F}(P,[f])$, we have $P\cong P'$.
    \end{thm}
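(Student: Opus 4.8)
The strategy is to feed the hypotheses into the double-coset description of Theorem~\ref{TH:patching-general} and the weak approximation statement of Theorem~\ref{TH:density}, using the topology of $K$ to control how the ``$S$-level'' and ``$F$-level'' subgroups sit inside the ``$K$-level'' ones. First note that the square \eqref{EQ:rings-cart-square} is cartesian because $R=S\cap F$, and that it is onto because $K=S+F$: for $k\in K$ the coset $k-F$ is dense (a translate of the dense subring $F$) and hence meets the nonempty open set $S$. So, by condition~$(4)$, Theorem~\ref{TH:patching-general} applies and produces an injection $\Phi\colon\gen_{S,F}(P,[f])\hookrightarrow O_S\backslash O_K/O_F$ in the notation of~\ref{NT:patching-groups}. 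I would then show that $\Phi$ is a bijection (which, again by Theorem~\ref{TH:patching-general}, simultaneously gives the last assertion, namely $P'\cong P$ for every $(P',[f'])\in\gen_{S,F}(P,[f])$) and that $|O_S\backslash O_K/O_F|$ equals the claimed ratio.

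For bijectivity it suffices to prove $O_K\subseteq G_SG_F$, and in fact I would prove $G_SG_F=G_K$. Since $A_F\in\rproj F$ by condition~$(3)$, the ring $\End_{A_F}(P_F)$ is finitely generated projective over $F$, so condition~$(2)$ and Proposition~\ref{PR:density-of-units} give that $G_F$ is dense in $G_K$. Using condition~$(5)$ together with the flatness (and, in the intended applications, the module-finiteness) hypotheses, $\End_{A_S}(P_S)$ is an open submodule of $\End_{A_K}(P_K)$; moreover, because $(P,[f])$ is unimodular the inverse of an isometry of $[f_K]$ is its adjoint $h_{f_K}^{-1}\phi^*h_{f_K}$, which stays in $\End_{A_S}(P_S)$ whenever $\phi$ does, so $O_S=O_K\cap\End_{A_S}(P_S)$; in particular $O_S$ is open in $O_K$, a fact I reuse below. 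Now for $\eta\in G_K$ the set of $\beta\in G_K$ with $\eta\beta\in\End_{A_S}(P_S)$ and $\beta^{-1}\eta^{-1}\in\End_{A_S}(P_S)$ is open (preimage of the open set $\End_{A_S}(P_S)$ under continuous maps, via Proposition~\ref{PR:topological-algebra}) and contains $\eta^{-1}$; as $G_F$ is dense it contains some $\beta\in G_F$, and then $\alpha:=\eta\beta$ and $\alpha^{-1}$ both lie in $\End_{A_S}(P_S)$, so $\alpha\in G_S$ and $\eta=\alpha\beta^{-1}\in G_SG_F$. Hence $G_SG_F=G_K\supseteq O_K$.

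It remains to count $|O_S\backslash O_K/O_F|$. Put $\Delta:=\Delta_{\calI(P_K),[f_K]}\colon O_K\to(\Z/2\Z)^{\calI(P_K)}$; the target is finite since $A_K/\Jac(A_K)$ is semisimple artinian (as recorded in the proof of Theorem~\ref{TH:density}). Because $\Delta$ is a group homomorphism, $\Delta(O_SgO_F)=\Delta(g)+\Delta(O_S)+\Delta(O_F)$ is a single coset of the subgroup $\Delta(O_S)+\Delta(O_F)$, so $\Delta$ induces a surjection $O_S\backslash O_K/O_F\twoheadrightarrow\Delta(O_K)/(\Delta(O_S)+\Delta(O_F))$. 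For injectivity, observe that conditions~$(0)$–$(3)$ are exactly the hypotheses of Theorem~\ref{TH:density} applied to the unitary $F$-algebra $A_F$ and its scalar extension to $K$, so $\overline{O_F}\supseteq\ker\Delta$. Suppose $g,g'\in O_K$ have the same image in $\Delta(O_K)/(\Delta(O_S)+\Delta(O_F))$; multiplying $g'$ on the left by an element of $O_S$ and on the right by an element of $O_F$ (which does not change $O_Sg'O_F$) we may assume $\Delta(g)=\Delta(g')$, so $h:=g'g^{-1}\in\ker\Delta$. Since $\ker\Delta$ is normal, $h\in\ker\Delta=g(\ker\Delta)g^{-1}\subseteq g\overline{O_F}g^{-1}=\overline{gO_Fg^{-1}}$, while $O_Sh$ is an open neighborhood of $h$ (as $O_S$ is open and $1\in O_S$); hence $O_Sh\cap gO_Fg^{-1}\neq\emptyset$, giving $s\in O_S$, $f\in O_F$ with $sh=gfg^{-1}$, i.e.\ $g'=hg=s^{-1}gf\in O_SgO_F$. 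Thus the surjection above is a bijection, so $|O_S\backslash O_K/O_F|=|\Delta(O_K)|/|\Delta(O_F)+\Delta(O_S)|$, and composing with $\Phi$ yields $|\gen_{S,F}(P,[f])|=|\Delta(O([f_K]))|/|\Delta(O([f_F]))+\Delta(O([f_S]))|$.

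I expect the main obstacle to be the second paragraph: establishing that $\End_{A_S}(P_S)$ is open in $\End_{A_K}(P_K)$ and that $O_S=O_K\cap\End_{A_S}(P_S)$. This is precisely where condition~$(5)$ is used, and it requires carrying the flatness (and finiteness) hypotheses through the passage from $A$ to $\End_A(P)$ and then to $O([f])$, while keeping in mind that the inclusion $\End_{A_S}(P_S)\hookrightarrow\End_{A_K}(P_K)$ does \emph{not} reflect invertibility of arbitrary endomorphisms — only of isometries, and only thanks to the unimodularity of $(P,[f])$. Everything else is a formal consequence of Theorems~\ref{TH:patching-general} and~\ref{TH:density} together with the topological-group formalism of Section~\ref{section:topology}.
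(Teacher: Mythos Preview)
Your proposal is correct and follows essentially the same route as the paper's proof. The paper packages the topological step you flag as the ``main obstacle'' into Lemma~\ref{LM:open-subobjects} (openness of $\End_{A_S}(P_S)$, of $\units{\End_{A_S}(P_S)}$, and of $O([f_S])$ inside their $K$-counterparts) and then uses the general fact $\overline{X}\subseteq XU$ for any open neighborhood $U$ of $1$ in a topological group, which is the abstract form of both your $G_SG_F=G_K$ argument and your $O_Sh\cap gO_Fg^{-1}\neq\emptyset$ argument; your explicit neighborhood-chasing is equivalent. Your adjoint observation $\phi^{-1}=h_{f_K}^{-1}\phi^*h_{f_K}$ gives $O_K\cap\End_{A_S}(P_S)\subseteq G_S$, but to get the stronger equality $O_S=O_K\cap\units{E_S}$ (which you actually need, since the double cosets are taken with respect to $O_S$) you must also check that an isometry of $[f_K]$ lying in $\units{\End_{A_S}(P_S)}$ is already an isometry of $[f_S]$; the paper asserts this in the last line of Lemma~\ref{LM:open-subobjects}, and it follows from the injectivity statements proved there.
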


    Before giving the proof, let us present an example in which the theorem can be applied,
    and the $\{S,F\}$-genus can be given a more concrete meaning.

    \begin{example}\label{EX:basic-scenario}
        Suppose $R$ is a Dedekind domain with finitely many ideals (or equivalently, $R$ is a semilocal PID).
        For $\frakp\in\Spec R$, denote by $\hat{R}_\frakp$ the completion of $R_{\frakp}$ (which
        is a discrete valuation ring) and let $\hat{F}_\frakp$
        denote the fraction field of $\hat{R}_{\frakp}$. Note that $F:=\hat{R}_0$ is just the
        fraction field of $R$.
        We endow $\hat{R}_\frakp$ and $\hat{F}_\frakp$
        with their natural topologies.
        Now, let
        \[S=\prod_{0\neq\frakp}\hat{R}_\frakp\qquad\text{and}\qquad K=\prod_{0\neq \frakp}\hat{F}_\frakp\ .\]
        We embed $F$ diagonally in $K$. It it well-known that $F$ is dense in $K$ \cite[Th.~11.6]{Endler72ValuationTheory},
        and $F\cap S=R$. Furthermore, any torsion-free $R$-module is flat \cite[Th.~4.69]{La99}.
        This means that Theorem~\ref{TH:main} can be applied with any unitary $R$-algebra $(A,\sigma,u,\Lambda)$
        such that $A$ is torsion-free and $\dim_FA_F<\infty$. Moreover, for $(P,[f])\in\Quad[u,\Lambda]{A,\sigma}$,
        we have
        \[
        \gen_{S,F}(P,[f])=\gen_{\{\hat{R}_\frakp\where \frakp\in\Spec R\}}(P,[f])\ .
        \]
        That is, the $\{S,F\}$-genus is the genus considered in the introduction.

        Now, let $k(\frakp)$ denote the residue field at $\frakp$, namely, $R_\frakp/\frakp_{\frakp}$.
        If $A$ is finitely generated and projective as an $R$-module,
        then a standard lifting argument (see
        \cite[Th.~2.2(2)]{QuSchSch79} or \cite[Th.~II.4.6.1]{Kn91}, for instance), implies that
        that $\scalarExt{k(\frakp)}{R}(P,[f])\cong \scalarExt{k(\frakp)}{R}(P',[f'])$ if and only if
        $\scalarExt{\hat{R}_\frakp}{R}(P,[f])\cong \scalarExt{\hat{R}_\frakp}{R}(P',[f'])$. Thus, in this case,
        $
        \gen_{S,F}(P,[f])=\gen_{\{k(\frakp)\where \frakp\in\Spec R\}}(P,[f])
        $.
		
		If in addition $\Lambda$ is a summand
		of $A$, then $\uO([f])$
		is defined as an affine $R$-scheme (see the appendix) and  $\gen_{S,F}(P,[f])$
        corresponds to the first Nisnevich cohomology $\mathrm{H}^1_{\mathrm{Nis}}(R,\uO([f]))$ (cf.\ Remark~\ref{RM:double-cosets}).
    \end{example}

    The proof of Theorem~\ref{TH:main} requires a technical lemma. As in section~\ref{section:density},
    we topologize all objects defined over $K$.

    \begin{lem}\label{LM:open-subobjects}
        Keep the assumptions of Theorem~\ref{TH:main} and let $P'\in\rproj{A}$. Then the maps
        (cf.~\ref{subsection:unitary-algs})
        \begin{align*}
            \scalarExt{K}{S}:&\Hom_{A_S}(P_S,P'_S)\to\Hom_{A_K}(P_K,P'_K)\\
            \scalarExt{K}{S}:&\units{\End_{A_S}(P_S)}\to\units{\End_{A_K}(P_K)}\\
            \scalarExt{K}{S}:&\Lambda_{P_S}\to \Lambda_{P_K}\\
            \scalarExt{K}{S}:&O([f_S])\to O([f_K])
        \end{align*}
        are injective, and their image is open (in the appropriate topology).
    \end{lem}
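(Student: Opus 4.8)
The plan is to deduce all four assertions from the single statement that \emph{the image of $A_S = A\otimes_R S$ in $A_K = A\otimes_R K$ is open}, where $A_K\in\rproj{K}$ (by assumption~(3) of Theorem~\ref{TH:main}, $A_K = A_F\otimes_F K$) carries the topology of Section~\ref{section:topology}. Injectivity I would dispose of first: via Lemma~\ref{LM:natural-isomorphism} the map $\scalarExt{K}{S}\colon \Hom_{A_S}(P_S,P'_S)\to\Hom_{A_K}(P_K,P'_K)$ is identified with $M\otimes_R(-)$ applied to the inclusion $S\hookrightarrow K$, where $M = \Hom_A(P,P')$; since $M$ is a flat $R$-module (an $R$-summand of $\Hom_A(A^n,A^m)\cong A^N$, and $A$ is $R$-flat by~(4)), this map is injective, and the four maps in the statement are restrictions of it (taking $P'$ to be $P$, $P$, $P^*$, $P$ respectively, and using $(P_S)^*\cong(P^*)_S$).

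For the key openness statement I would argue as follows. Because $A_F\in\rproj{F}$ and $A_F$ is $F$-spanned by the image of $A\to A_F$, one can choose finitely many $x_1,\dots,x_n\in A$ whose images generate $A_F$ over $F$; the induced surjection $F^n\twoheadrightarrow A_F$ splits (as $A_F$ is $F$-projective), yielding $\psi_i\in\Hom_F(A_F,F)$ with $y = \sum_i x_i\psi_i(y)$ for all $y\in A_F$ — a finite dual basis of $A_F$ whose ``numerators'' lie in $A$. Base change to $K$ preserves this relation, so $\{x_i,\psi_i\otimes\id_K\}$ is a dual basis of $A_K$ over $K$; hence the $K$-linear surjection $q\colon K^n\to A_K$, $q((k_i)_i) = \sum_i x_i k_i$, is split, so $A_K$ is a topological direct summand of $K^n$ and $q$ is continuous and open. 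By assumption~(5) the subring $S$ is open in $K$, so $S^n$ is an open additive subgroup of $K^n$, and $q(S^n) = \sum_i x_i S$ is therefore open in $A_K$. As each $x_i$ lies in (the image of) $A$, hence of $A_S$, we have $q(S^n)\subseteq A_S$; so the additive subgroup $A_S\subseteq A_K$ contains an open neighbourhood of $0$ and is hence open (and closed).

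I would then propagate this. Fix $n,m$ with $P$ a summand of $A^n$ and $P'$ a summand of $A^m$, and put $N = nm$. The $K$-module $\Hom_{A_K}(P_K,P'_K)$ is a summand of $\Hom_{A_K}(A_K^n,A_K^m)\cong A_K^N$ via $K$-linear (hence continuous) maps, so it is a topological direct summand of $A_K^N$; likewise $\Hom_{A_S}(P_S,P'_S) = \pi(A_S^N)$ for an $S$-linear projection $\pi$ that base-changes to the projection $\pi_K\colon A_K^N\to\Hom_{A_K}(P_K,P'_K)$. Since $A_S^N$ is open in $A_K^N$ (key step) and $\pi_K$ is an open map, the image of $\Hom_{A_S}(P_S,P'_S)$, which equals $\pi_K$ of the image of $A_S^N$, is open. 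For the units map ($P'=P$), $\scalarExt{K}{S}$ is an injective unital ring homomorphism $\End_{A_S}(P_S)\to E := \End_{A_K}(P_K)$ with open image $W$, so $W$ is an open subring of the topological ring $E$; by Proposition~\ref{PR:topological-algebra}(ii), $\units{E}$ is open in $E$ and inversion $\mathrm{inv}_E\colon\units{E}\to E$ is continuous, whence the image of $\units{\End_{A_S}(P_S)}$ is $\units{W} = W\cap\units{E}\cap\mathrm{inv}_E^{-1}(W)$, which is open in $E$. The $\Lambda_{P_S}$ and $O([f_S])$ cases are analogous: $\Lambda_{P_K}$ is a $K$-summand of $\calS(P_K)=\Hom_{A_K}(P_K,(P_K)^*)$ (as $\Gamma=\Lambda_K$ is a $K$-summand of $A_K$ by~(3); cf.\ the appendix) and $O([f_K])$ carries the subspace topology from $E$; granting the compatibility $\Lambda_S = A_S\cap\Lambda_K$ inside $A_K$ (equivalently, injectivity of $\calQ(P_S)\to\calQ(P_K)$, which comes from the cartesian-and-onto square~\eqref{EQ:rings-cart-square}; cf.\ Lemmas~\ref{LM:cart-square-I} and~\ref{LM:cart-square-II}), the image of $\Lambda_{P_S}$ is $\Lambda_{P_K}$ intersected with the (open) image of $\calS(P_S)$, and the image of $O([f_S])$ is $\units{W}\cap O([f_K])$ — both open.

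The main obstacle is the key step. Everything after it is bookkeeping with summands, open subrings, and the cartesian squares of Section~\ref{section:double-cosets}; but the openness of $A_S$ in $A_K$ is exactly where assumption~(5) is used essentially, and where one must be careful to choose the dual basis of $A_F$ with its numerators $x_i$ inside $A$ itself, not merely in $A_F$, so that the open subgroup $\sum_i x_i S$ lands in $A_S$. Since $A$ is not assumed finite over $R$ (it could be, e.g., $\smallSMatII{\Z}{\Q}{}{\Z}$), one cannot simply embed an affine $R$-scheme into $\mathbb{A}^N_R$ and intersect its $K$-points with $S^N$; routing through the $F$-projective algebra $A_F$ is what makes the argument work.
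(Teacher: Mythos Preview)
Your overall strategy matches the paper's: injectivity via flatness of $\Hom_A(P,P')$, and openness via the observation that an $S$-submodule $Q'$ of a finitely generated projective $K$-module $Q$ with $Q'K=Q$ is open (your ``key step'' is this claim for $Q=A_K$, $Q'=A_S$; the paper states it once abstractly and applies it uniformly). The treatment of $\Hom$ and of units is essentially identical in both.

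Your route for $\Lambda_{P_S}$ and $O([f_S])$, however, has a genuine gap. The ``compatibility $\Lambda_S=A_S\cap\Lambda_K$'', equivalently injectivity of $\calQ(P_S)\to\calQ(P_K)$, does \emph{not} follow from the cartesian squares and can fail under the hypotheses of Theorem~\ref{TH:main}. Take $R=\Z_{(2)}$, $F=\Q$, $S=\Z_2$, $K=\Q_2$ and $(A,\sigma,u,\Lambda)=(R,\id_R,-1,2R)$: all assumptions hold (note $\Lambda_F=\Q=A_F$ is trivially a summand), yet $\Lambda_S=2\Z_2$ while $A_S\cap\Lambda_K=\Z_2$. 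Concretely, for $P=A^2$ and $\tilde f(x,y)=x_1y_2$, the matrix $\smallSMatII{1}{1}{0}{1}$ lies in $O([f_K])\cap\units{\End_{A_S}(P_S)}$ but not in $O([f_S])$, so your description of the image of $O([f_S])$ as $\units W\cap O([f_K])$ is incorrect --- and so, in fact, is the paper's identical claim $O([f_S])=O([f_K])\cap\units{E_S}$. For $\Lambda_{P_S}$ the paper avoids the problem by applying its general openness claim directly with $Q=\Lambda_{P_K}$ (a finitely generated projective $K$-module since $\Lambda_K$ is a summand of $A_K$) and $Q'=\Lambda_{P_S}$, using $\Lambda_{P_S}\cdot K=\Lambda_{P_K}$ from Proposition~\ref{PR:scalar-ext-for-quad-forms}; you should do the same rather than intersect. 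For $O([f_S])$ one correct argument is: fix $A_S$-module generators $x_1,\dots,x_m$ of $P_S$; for $\phi\in O([f_K])\cap\units{E_S}$ the elements $\tilde f_S(\phi x_j,\phi x_j)-\tilde f_S(x_j,x_j)$ lie in $A_S\cap\Lambda_K$, in which $\Lambda_S$ is open (being open already in $\Lambda_K$), and $\phi\in O([f_S])$ precisely when each of these lies in $\Lambda_S$ (since $\phi$ already preserves $h_f$, the map $x\mapsto \tilde f_S(\phi x,\phi x)-\tilde f_S(x,x)$ is additive and satisfies $a^\sigma(\,\cdot\,)a$-twisting, so checking on generators suffices). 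Thus $O([f_S])$ is a finite intersection of open subsets of the open set $O([f_K])\cap\units{E_S}$.
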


    \begin{proof}
    	When $A\in\rproj{R}$ and $\Lambda$ is a summand of $A$,  $\uO([f])$ is defined as an affine $R$-scheme
        (see the appendix),
        and then the lemma then follows from
    	\cite[Ex.~2.2]{Conrad}.

        We prove the general case directly:
        By Lemma~\ref{LM:natural-isomorphism}, we may identify
        $\End_{A_T}(P_T,P'_T)$ with $\End_A(P,P')_T$ and $\Hom_{A_T}(P_T,(P_T)^*)$ with $\Hom_A(P,P^*)_T$
        for any $R$-algebra $T$.
        As in the proof of Lemma~\ref{LM:cart-square-II}(i),
        $\End_A(P,P')$ is a flat $R$-module. Thus, $\scalarExt{K}{S}:\Hom_A(P,P')_S\to\Hom_A(P,P')_K$
        is an injection (because the inclusion map $S\to K$ is an injection). The other three maps in the lemma
        are just restrictions of $\scalarExt{K}{S}:\Hom_{A_S}(P_S,P'_S)\to\Hom_{P_K}(P_K,P'_K)$
        in the special cases $P'=P^*$ and $P'=P$, so they are  also injective.

        We identify $\Hom_{A_S}(P_S,P'_S)$ and $\Lambda_{P_S}$
        with their images in $\Hom_{P_K}(P_K,P'_K)$ and\ $\Lambda_{P_K}$, respectively.
        To prove that these images are open, it is enough to
        prove the following general claim: Let $Q$ be a f.g.\ projective $K$-module. Then any $S$-submodule $Q'\subseteq Q$ with $Q'\cdot K=Q$
        is open $Q$. (Note that we have $\Hom_A(P,P')_S\cdot K=\Hom_A(P,P')_K$ by definition, and
        $\Lambda_{P_S}\cdot K=\Lambda_{P_K}$ by Proposition~\ref{PR:scalar-ext-for-quad-forms}.)
        Indeed, $Q'$ contains a finite set $\{x_i\}_{i=1}^t$ generating $Q$. It is well-known that
        there are $\{\psi_i\}_{i=1}^t\subseteq \dual{Q}=\Hom_K(Q,K)$ such that $\{x_i,\psi_i\}_{i=1}^t$ is a dual basis
        for $Q$. (See the proof of Proposition~\ref{PR:point-wise-topology} for the definition;
        construct the $\psi_i$-s by a section of the projection
        $\bigoplus_i K\to Q$ given by $\bigoplus_ik_i\mapsto \sum_i x_ik_i$.)
        Each $\psi_i:Q\to K$ is $K$-linear, hence continuous, and thus
        $\bigcap_{i=1}^t\psi_i^{-1}(S)$ is open (because $S$ is open in $K$).
        However, for all $x\in \bigcap_i\psi_i^{-1}(S)$, we have
        $x=\sum_ix_i(\psi_ix)\in\sum_i x_iS\subseteq Q'$. Thus, $Q'$ contains an open $S$-submodule, so $Q'$
        is open in $Q$.

        Next, we show that $\units{\End_{A_S}(P_S)}$ is open in $\units{\End_{A_K}(P_K)}$:
        Write $E_S=\End_{A_S}(P_S)$ and $E_K=\End_{A_K}(P_K)$.
        By what we have shown above, $E_S$ is open in $E_K$.
        Thus, $E_S\cap\units{E_K}$ is open in $\units{E_K}$. By Proposition~\ref{PR:topological-algebra}(ii),
        $(E_S\cap \units{E_K})^{-1}$ is also open in $\units{E_K}$, hence
        $E_S\cap(E_S\cap\units{E_K})^{-1}$ is open as well. But this set is $\units{E_S}$.

        Finally, $O([f_S])$ is open in $O([f_K])$ because
        $O([f_S])=O([f_K])\cap \units{E_S}$.
    \end{proof}

    \begin{proof}[Proof of Theorem~\ref{TH:main}]
        We shall make use of the following well-known fact about topological groups:
        \begin{itemize}
            \item[($*$)] If $G$ is a topological (multiplicative) group and $X$ is any subset of $G$,
            then $\overline{X}=\bigcap_U XU$, where $U$ ranges over all neighborhoods of $1_G$.
        \end{itemize}
        A proof can be found in \cite[Th.~3.3(3)]{Wa93} (the proof is given for abelian groups but generalizes verbatim to non-abelian groups).
        Note that ($*$) implies that for any neighborhood $U$ of $1_G$, we have $\overline{X}\subseteq XU$.
        We henceforth use the notation of~\ref{NT:patching-groups}.

        \smallskip

        It is clear that the rings $R,S,F,K$ form a cartesian square as in section~\ref{section:double-cosets}.
        This square is onto since $S+F\supseteq\overline{F}=K$  by ($*$).
        That $A$ and $K$ are flat over $R$ holds by assumption.
        Furthermore,
        we claim that $O_K\subseteq G_SG_F$.  Indeed,  by Lemma~\ref{LM:open-subobjects}, $G_S$
        is open in $G_K$, and by Proposition~\ref{PR:density-of-units},
        $G_F$ is dense in $G_K$. Thus, by ($*$), $G_SG_F\supseteq\overline{G_F}=G_K$, as required.
        We may therefore apply Theorem~\ref{TH:patching-general}  to get a one-to-one correspondence
        \[
        \gen_{S,F}(P,[f])\longleftrightarrow O_S\!\setminus\! O_K/O_F\ .
        \]
        We also get that $(P',[f'])\in \gen_{S,F}(P,[f])$ implies $P'\cong P$.

        Let $H_K=\Delta(O_K)$, $H_S=\Delta(O_S)$ and $H_F=\Delta(O_F)$.
        We will prove the theorem by constructing a bijection between $O_S\!\setminus\! O_K/O_F$
        and $H_K/(H_S+H_F)$, whose size is clearly the desired quantity.
        Define $\Psi:O_S\!\setminus\! O_K/O_F\to H_K/(H_S+H_F)$ by
        \[\Psi(O_S \phi O_F)=\Delta(\phi)+H_S+H_F\ .\]
        It is clear that $\Psi$ is well-defined and surjective. To show $\Psi$ is injective,
        it is enough to prove that $\Delta^{-1}(\Delta(\phi)+H_S+H_F)=O_S\phi O_F$.

        Let $\phi\in O_K$ and let $t=\Delta(\phi)$. We claim that
        $ \Delta^{-1}(t)\subseteq O_S\phi O_F$.
        Indeed,
        by Theorem~\ref{TH:density} (whose assumptions hold by conditions (0)--(3)),
        $\ker\Delta\subseteq \overline{O_F}$.
        Thus, $\Delta^{-1}(t)=\phi\cdot\ker\Delta\subseteq \phi\overline{O_F}=\overline{\phi O_F}$
        and $\overline{\phi O_F}\subseteq O_S\phi O_F$
        by ($*$).

        Now let $x\in H_S$ and $y\in H_F$.
        Then there are $\psi\in O_S$, $\eta\in O_F$ with $\Delta(\psi)=x$ and $\Delta(\eta)=y$.
        By the previous paragraph, we have
        $
        \Delta^{-1}(x+t+y)\subseteq O_S\psi\phi \eta O_F=O_S\phi O_F
        $, so
        $\Delta^{-1}(\Delta(\phi)+H_S+H_F)=\bigcup_{x\in H_S,y\in H_y}\Delta^{-1}(x+t+y)\subseteq O_S\phi O_F$.
        Since $O_S\phi O_F\subseteq \Delta^{-1}(\Delta(\phi)+H_S+H_F)$ holds by construction, we are done.
    \end{proof}

    \begin{cor}\label{CR:main-for-fields}
        Let $K$ be a commutative \emph{semisimple}  Hausdorff  topological ring such that
        $\units{K}$ is open in $K$ and $a\mapsto a^{-1}:\units{K}\to \units{K}$ is continuous.
        Let $F$ be a dense topological sub\emph{field} of $K$, let $S$ be an open subring
        of $K$, and let $R=S\cap F$. Let $(A,\sigma,u,\Lambda)$ be a unitary $R$-algebra
        such that $A$ is flat as an $R$-module and $\dim_F(A_F)<\infty$,  let
        $(P,[f])\in\Quad[u,\Lambda]{A,\sigma}$, and let $\calI=\calI(P_K)$
        and $\Delta=\Delta_{\calI,[f_K]}$. Then
        \[
        |\gen_{S,F}(P,[f])|=\frac{2^{|\calI|}}{\big|\Delta(O([f_F]))+ \Delta(O([f_S]))\big|}\ .
        \]
        In addition, for all $(P',[f'])\in\gen_{S,F}(P,[f])$, we have $P'\cong P$.
    \end{cor}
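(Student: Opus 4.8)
The plan is to deduce the corollary from Theorem~\ref{TH:main}. Comparing the two formulas, the only assertion going beyond Theorem~\ref{TH:main} is the equality $|\Delta(O([f_K]))|=2^{|\calI|}$, that is, the surjectivity of $\Delta=\Delta_{\calI,[f_K]}:O([f_K])\to(\Z/2\Z)^{\calI}$; apart from that it suffices to check hypotheses $(0)$--$(5)$ of Theorem~\ref{TH:main}. I begin by recording that, being commutative and semisimple, $K\cong L_1\times\dots\times L_r$ with the $L_j$ fields, and since $F$ is a field each composite $F\hookrightarrow K\twoheadrightarrow L_j$ is injective, so each $L_j$ is a field extension of $F$; in particular $K$ is an $F$-vector space.

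Next I verify the hypotheses of Theorem~\ref{TH:main}. Conditions $(1)$ and $(5)$ are assumed outright. Condition $(2)$ holds because $F$ is dense and, being a field, is rationally closed in $K$ (every nonzero $a\in F$ has $a^{-1}\in F\subseteq K$, while $0\notin\units{K}$; cf.\ Remark~\ref{RM:rationally-closed}). Condition $(3)$ holds because $A_F$ is a finite-dimensional $F$-vector space, hence free, and $\Lambda_F$ is an $F$-subspace of $A_F$, hence an $F$-summand. For $(4)$: $A$ is flat over $R$ by assumption, and $K$ is flat over $R$ since $K$ is an $F$-vector space --- hence flat over $F$ --- while $F$ is flat over $R$, being a vector space over the fraction field of the domain $R=S\cap F$, which fraction field is a localization of $R$. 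For $(0)$: an epimorphic image of $K$ isomorphic to $\F_2$ is necessarily one of the factors $L_j$, which would force $F\cong\F_2$; being finite, $F$ is then closed in the Hausdorff ring $K$, so $F=K=\F_2$. I dispose of this degenerate case separately (then $R=S=F=K=\F_2$, the genus is a single class, and the claimed identity reduces to the surjectivity of the Dickson map over $\F_2$), and assume $K\neq\F_2$ henceforth, so that $(0)$ holds.

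It remains to prove that $\Delta$ is onto. Using $F\subseteq K$ one identifies $A_K\cong A_F\otimes_FK$, a finite-dimensional algebra over the artinian ring $K$, hence an artinian, in particular semiperfect, ring. I would then imitate the proof of Theorem~\ref{TH:density}: apply transfer (Proposition~\ref{PR:transfer}) with $(Q,h)=(P_K,h_{f_K})$ to replace $(P_K,[f_K])$ by a quadratic space with free base module over $B_K:=\End_{A_K}(P_K)$, which is again artinian; then every simple factor of $B_K/\Jac(B_K)$ has nonzero base-module component (the base module being free and, assuming $P\neq0$, nonzero), so the semiperfect part of Theorem~\ref{TH:gen-by-reflections} applies and gives that the associated Dickson map $\Delta_{\calI(B_K)}$ is onto. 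The hypothesis in Theorem~\ref{TH:gen-by-reflections} that no underlying division ring is $\F_2$ or $\F_2\times\F_2$ holds because of condition $(0)$: if $D_i\cong\F_2$ or $\F_2\times\F_2$ for some simple factor $A_i=\nMat{D_i}{n_i}$ of $A_K/\Jac(A_K)$, then, viewing $A_i$ as an algebra over the relevant field factor $L_j$ of $K$, the structure map $L_j\to\Cent(A_i)$ has image a subfield of $\F_2$ or $\F_2\times\F_2$, forcing $L_j\cong\F_2$; the analogous statement for $B_K/\Jac(B_K)$ follows from Proposition~\ref{PR:transfer-preserve-split-orth}. Finally, Proposition~\ref{PR:compatiability-with-Dickson-inv} identifies $\calI(B_K)$ with $\calI(P_K)=\calI$ compatibly with the two Dickson maps, whence $\Delta$ is onto, i.e.\ $|\Delta(O([f_K]))|=2^{|\calI|}$. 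Plugging this into Theorem~\ref{TH:main} yields the formula, and Theorem~\ref{TH:main} also gives $P'\cong P$ for every $(P',[f'])\in\gen_{S,F}(P,[f])$.

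The main obstacle is the surjectivity of $\Delta$: it is not a formal consequence of the surjectivity of the individual Dickson maps and genuinely needs the transfer reduction to a free base module together with careful tracking of the $\F_2$-type hypotheses in Theorem~\ref{TH:gen-by-reflections}. Checking conditions $(0)$--$(5)$ and invoking Theorem~\ref{TH:main} are routine.
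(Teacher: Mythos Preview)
Your proposal is correct and follows essentially the same route as the paper: verify hypotheses $(0)$--$(5)$ of Theorem~\ref{TH:main}, handle the degenerate case $K=\F_2$ separately, and establish surjectivity of $\Delta$ from the semiperfect case of Theorem~\ref{TH:gen-by-reflections} after a transfer reduction. The paper compresses the surjectivity argument into the single line ``by Theorem~\ref{TH:gen-by-reflections} and the proof of Theorem~\ref{TH:density}, $\Delta$ is onto,'' whereas you spell out the transfer to a free base module and the tracking of the $\F_2$-type hypothesis explicitly --- this is the same reasoning, just unpacked. One minor point: your appeal to Proposition~\ref{PR:transfer-preserve-split-orth} for the $\F_2$ condition on $B_K$ is not quite the right citation; the relevant observation is simply that $B_K=\End_{A_K}(P_K)$ is itself a $K$-algebra, so your argument for $A_K$ applies verbatim.
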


    \begin{proof}
        The ring $A_K$ is artinian and hence semiperfect. Thus,
        by Theorem~\ref{TH:gen-by-reflections} and the proof of Theorem~\ref{TH:density},
        $\Delta$ is onto, namely, $\Delta(O([f_K]))=(\Z/2\Z)^{\calI}$.
        If $\F_2$ is an epimorphic image of $K$, then necessarily $F=\F_2$ (because $F$ is a field). Since $K$ is Hausdorff
        and $F$ is dense in $K$, we must have $R=S=F=K=\F_2$ and the corollary becomes a triviality.
        We may therefore assume that $\F_2$ is not an epimorphic image of $K$.
        The corollary
        then follows from Theorem~\ref{TH:main}, provided we verify conditions (1)--(5).
        Indeed,
        conditions (1) and (5) hold by assumption and conditions (2) and (3) hold since $F$ is a field and $\dim_F A_F<\infty$.
        That $A$ is flat as an $R$-module holds by assumption. To see that $K$
        is flat over $R$, observe that $K$ is flat over $F_0$, the fraction field of $R$,
        and $F_0$ is flat over $R$. This proves condition (4).
        (When $K$ is non-discrete, we actually have $F_0=F$:
        Since $S$ is open in $K$, $R=S\cap F$ is open in $F_0$. Thus, $F_0$ is open in $F$.
        Let $x\in F$.
        Then
        there is a neighborhood $U\subseteq F_0$ of $0$  such that $(x+U)(0+U)\subseteq 0+F_0$.
        Since $F$ is non-discrete, $U\neq\{0\}$, so there are $a,b\in\units{F_0}$ such that $(x+a)b\in F_0$. But this means $x\in F_0$.)
    \end{proof}

    The following example shows that in general $\gen_{S,F}(P,[f])$ can be of size  $2^{|\calI|}$ for arbitrarily
    large $\calI=\calI(P_K)$.

    \begin{example}\label{EX:max-genus-size}
        We use the notation of Example~\ref{EX:basic-scenario}
        and
        assume that $\Char F\neq 2$ and $\F_2$ is not an epimorphic image of $R$.
        Let $\frakp_1,\dots,\frakp_t$ be the maximal ideals of $R$,
        and for convenience, set $S_i=\hat{R}_{\frakp_i}$ and $K_i=\hat{F}_{\frakp_i}$.
        Then $K=\prod_i K_i$ and $S=\prod_i S_i$.

        Fix some $u,v\in \units{R}$ and $0\neq\pi\in \Jac(R)$.
        For a commutative $R$-algebra $T$, let $(u,v)_T$ denote the \emph{quaternion algebra}
        with center $T$ determined by $u$ and $v$, namely,
        $T\Trings{x,y\where x^2=u,y^2=v,xy=-yx}$. Let $\sigma=\sigma_T$ be the involution given by
        \[
        (a+bx+cy+dxy)^\sigma = a+bx+cy-dxy\qquad\forall\, a,b,c,d\in T\ .
        \]
        Define
        \[
        A=R+\pi R x+\pi R y+\pi Rxy\subseteq (u,v)_R
        \]
        and set $\Lambda=\Lambda^{\min}(1)=\{a-a^\sigma\where a\in A\}$.
        Then $(A,\sigma,1,\Lambda)$ is a unitary $R$-algebra.
        Assume further that $(u,v)_F$ is a division algebra and $(u,v)_{K_i}$, which is a central-simple $K_i$-algebra, splits
        for all $i$. We claim that for all $(P,[f])\in\Quad[u,\Lambda]{A,\sigma}$
        with $P\neq 0$, we have $|\calI(P_K)|=t$ and $\gen_{S,F}(P,[f])=2^{|\calI(P_K)|}=2^t$.

        Indeed, we clearly have
        \[(A_K,\sigma_K,1,\Lambda_K)\cong\prod_i((u,v)_{K_i},\sigma_{K_i},1,\Lambda_{K_i})\ .\]
        and each of the factors is split-orthogonal by assumption.
        Furthermore, since $P_F\neq 0$,
        $P_{K_i}\neq 0$ for all $1\leq i\leq t$. It follows that we can identify
        $\calI(P_K)$ with $\{1,\dots,t\}$. Denote by $\Delta_i$ the map $\Delta_{[f_{K_i}]}:O([f_{K_i}])\to\Z/2\Z$.
        By Corollary~\ref{CR:main-for-fields} and the definition of $\Delta_{\calI(P_K),[f_K]}$,
        it is enough to show that $\Delta_i(O([f_F]))=0$ and $\Delta_i(O([f_{S_i}]))=0$ for all $1\leq i\leq t$.

        Let us first show that $\Delta_i(O([f_{S_i}]))=0$.
        We have $A_{S_i}=S_i+\pi S_i x+\pi S_jy+\pi S_i xy$. It is easy
        to check that $\pi\cdot(u,v)_{S_i}\subseteq\Jac(A_{S_i})$. This means $A_{S_i}/\Jac(A_{S_i})$ is isomorphic to an epimorphic image
        of $S_i/\pi S_i$, hence $A_{S_i}$ is local and $A_{S_i}/\Jac(S_i)$ is isomorphic
        to $R/\frakp_i$.
        Thus, $O([f_{S_i}])$ is generated
        by reflections (Theorem~\ref{TH:gen-by-reflections}; $\F_2$ is not an epimorphic image of $R$), so it is  enough to prove
        that $\Delta_i(\phi_{K_i})=0$ for every reflection $\phi$ of $[f_{S_i}]$.
        Indeed, in this case, $\phi_{K_i}$ is a reflection of $[f_{K_i}]$
        and hence $\Delta_i(\phi_{K_i})=\deg A_{K_i}+2\Z=0$ by \cite[Pr.~5.2]{Fi14A}.
        This argument also shows that  $\Delta_i(O(P_F,[f_F]))=0$ (because $A_F=(u,v)_F$ is a division ring, and hence local).
        Alternatively, $\Delta_i(O(P_F,[f_F]))=0$  follows from \cite[Pr.~5.1]{Fi14A}.

        Explicit choices of $R,u,v,\pi$ satisfying all previous conditions
        are given as follows: Take $R$ to be $\Z$ localized at the multiplicative set $\Z\setminus \bigcup_{i=1}^t \Z p_i$
        where  $p_1,\dots,p_t$ are distinct odd prime numbers,
        let $u=v=-1$, and take $\pi=p_1\cdots p_t$.
    \end{example}

    \begin{remark}\label{RM:infinite-rational-genus}
        In general, $\gen_F(P,[f])$ and $\gen_S(P,[f])$
        can be infinite even when $R,S,F,K$ are
        as in Example~\ref{EX:basic-scenario}.
        For example, take $K=\C(s)(\!(t)\!)$ with the $t$-adic topology, $S=\C(s)[[t]]$,
        $F=K(s)(t)$ and $R=S\cap F$.
        Define $A=\smallSMatII{R}{t^2 R}{R}{R}$
        and $\sigma:A\to A$ by $\smallSMatII{x}{t^2y}{z}{w}^\sigma=\smallSMatII{x}{t^2z}{y}{w}$
        ($x,y,z,w\in R$), and let $\Lambda=\Lambda^{\min}(1)$.
        Then $(A,\sigma,1,\Lambda)$
        is a unitary $R$-algebra with $
        A_S=\smallSMatII{S}{t^2S}{S}{S}
        $
        and $A_F=\nMat{F}{2}$. The assumptions of Theorem~\ref{TH:main}
        are easily seen to hold.

        For every $a\in A$ with $a^\sigma =a$, define a quadratic space $(A,[f_a])$
        by
        \[
        \tilde{f}_a(x,y)=x^\sigma ay\ .
        \]
        We claim that both $\gen_F(A,[f_1])$ and $\gen_S(A,[f_1])$
        consist of infinitely many isomorphism classes.

        Indeed, it is easy to check that $[f_a]\cong [f_b]$ if and only if there exists
        $x\in\units{A}$ such that $x^\sigma ax=b$. Since for all $\alpha\in\C$, we have
        \[
        b(\alpha):=\SMatII{1+\alpha^2s^2}{0}{0}{1+\alpha^2s^2}=\SMatII{1}{-\alpha ts}{\alpha t^{-1}s}{1}^\sigma\cdot 1_A\cdot
        \SMatII{1}{-\alpha ts}{\alpha t^{-1}s}{1},
        \]
        it follows that $(A,[f_{b(\alpha)}])\in\gen_F(A,[f_1])$ for all $\alpha\in\C$.
        However, the forms $\{[f_{b(\alpha)}]\where \mathrm{Im}(\alpha)>0\}$ are pairwise non-isomorphic
        over $(A,\sigma,1,\Lambda)$, as can be easily checked by working modulo $\Jac(A)=\smallSMatII{tR}{t^2R}{R}{tR}$.

        Next, observe that for all $\alpha\in \C$, $1+\alpha t$ has a square root in $S$ (substitute
        $\alpha t$ in the Taylor expansion of $\sqrt{1+x}$), but not in $R$. Since
        \[
        c(\alpha):=\SMatII{1+\alpha t}{0}{0}{1}=
        \SMatII{\sqrt{1+\alpha t}}{0}{0}{1}^\sigma\cdot 1_A\cdot
        \SMatII{\sqrt{1+\alpha t}}{0}{0}{1},
        \]
        we have $(A,[f_{c(\alpha)}])\in\gen_S(A,[f_1])$ for all $\alpha\in \C$.
        The forms $\{[f_{c(\alpha)}]\}_{\alpha\in \C}$ are pair-wise non-isomorphic
        because $[f_{c(\alpha)}]\cong [f_{c(\alpha')}]$
        implies $\det(c(\alpha)) \in \det(c(\alpha'))(\units{F})^2$
        and this is possible only if
        $\alpha=\alpha'$.

        We remark that $|\gen_{S,F}(P,[f])|=1$ for all $(P,[f])\in\Quad[1,\Lambda]{A,\sigma}$
        by Theorem~\ref{TH:enough-idems} below. Replacing $\C$ with a larger field allows
        one to construct similar examples in which the $F$-genus and $S$-genus are of arbitrarily
        large cardinalities.
    \end{remark}

\section{Genus of Size~$1$}
\label{section:size-one}

    This section gives several sufficient conditions guaranteeing that
    the $\{S,F\}$-genus of  Theorem~\ref{TH:main}
    and Corollary~\ref{CR:main-for-fields} has size $1$ for all quadratic spaces.
    In \ref{subsection:general-crit}, we assume the general setting of Theorem~\ref{TH:main},
    and
    in \ref{subsection:orders} we specialize to orders over semilocal principal ideal domains.

\subsection{General Criteria}
\label{subsection:general-crit}

    We assume the setting of Theorem~\ref{TH:main}.
    That is, $K$ is a semilocal topological ring, $F$ and $S$
    are subrings of $K$, $R=S\cap F$, $(A,\sigma,u,\Lambda)$ is a unitary $R$-algebra,
    $(P,[f])\in\Quad[u,\Lambda]{A,\sigma}$, and all  assumptions of Theorem~\ref{TH:main} hold.
    We further write
        $(B,\tau,v,\Gamma)=\scalarExt{K}{R}(A,\sigma,u,\Lambda)$, $\quo{B}=B/\Jac(B)$,
        and $(\quo{B},\quo{\tau},\quo{v},\quo{\Gamma})=\prod_{i=1}^t(B_i,\tau_i,v_i,\Gamma_i)$
        where each factor is a simple unitary ring (see~\ref{subsectnio:csa-s}).

    We note that since $A$ is flat over $R$, the map $A\to A_F$ is injective (because the inclusion
    map $R\to F$ is injective). The maps $A\to A_S$, $A\to A_K$, $A_S\to A_K$, $A_F\to A_K$ are likewise injective.

\medskip

    We begin by observing that when $\calI(A_K)=\emptyset$, we  must have $|\gen_{S,F}(P,[f])|=1$
    by Theorem~\ref{TH:main}. This happens, for example,
    when
    the involutions $\tau_1,\dots,\tau_t$ are all of the second kind, in which case
    we say that $\tau=\sigma_K$ is \emph{essentially of the second kind}.
    This condition is  equivalent to the existence of
    $a\in A_K$ such that $a+\Jac(A_K)$ is central in $\quo{A_K}:=A_K/\Jac(A_K)$
    and $a-a^{\sigma_K}\in\units{A_K}$; the easy proof is left to the reader.
    We record our conclusion in the following proposition.

    \begin{prp}\label{PR:second-kind}
        If $\sigma_K$ is essentially of the second kind,
        then $|\gen_{S,F}(P,[f])|=1$
        for all $(P,[f])\in\Quad[u,\Lambda]{A,\sigma}$.
    \end{prp}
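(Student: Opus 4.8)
The plan is to reduce the statement immediately to Theorem~\ref{TH:main}: the hypothesis that $\sigma_K$ is essentially of the second kind forces the index set $\calI(P_K)$ to be empty, which collapses the formula of Theorem~\ref{TH:main} to the value $1$.

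First I would unwind the definitions. By hypothesis each involution $\tau_i$ on the simple artinian factor $(B_i,\tau_i,v_i,\Gamma_i)$ of $(\quo{B},\quo{\tau},\quo{v},\quo{\Gamma})$ is of the second kind, i.e.\ $\tau_i$ does not fix the center of $B_i$ pointwise. But condition~(2) in the definition of a split-orthogonal simple artinian unitary ring (see~\ref{subsectnio:csa-s}) requires the involution to be of the first kind; hence no $B_i$ is split-orthogonal. Since $\calI(P_K)$ is, by definition, a subset of $\{\, i : (B_i,\tau_i,v_i,\Gamma_i)\ \text{is split-orthogonal} \,\}$, we conclude that $\calI(P_K)=\emptyset$.

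Now I would invoke Theorem~\ref{TH:main}, all of whose hypotheses are in force in~\ref{subsection:general-crit}. With $\calI=\calI(P_K)=\emptyset$, the map $\Delta=\Delta_{\calI,[f_K]}$ takes values in the trivial group $(\Z/2\Z)^{\emptyset}$, so $|\Delta(O([f_K]))|=1$ and $|\Delta(O([f_F]))+\Delta(O([f_S]))|=1$; the formula of Theorem~\ref{TH:main} then yields $|\gen_{S,F}(P,[f])|=1$. I do not expect any real obstacle here: the substantive content has already been packaged into Theorem~\ref{TH:main}, and the only point requiring attention is the elementary bookkeeping that an involution of the second kind on a factor precludes split-orthogonality of that factor. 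If one prefers to bypass Theorem~\ref{TH:main}, one can argue directly: when $\calI(P_K)=\emptyset$, Theorem~\ref{TH:gen-by-reflections} shows that $O([f_K])$ is generated by pseudo-reflections (this is where condition~(0), that $\F_2$ is not an epimorphic image of $K$, enters), and then the weak approximation argument from the proof of Theorem~\ref{TH:main} gives $O_S O_F = O_K$, so the double coset space of Theorem~\ref{TH:patching-general}, hence the genus, reduces to a single point.
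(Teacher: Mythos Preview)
Your proposal is correct and matches the paper's argument exactly: the paper proves this proposition in the paragraph immediately preceding it, observing that when each $\tau_i$ is of the second kind no factor can be split-orthogonal (since split-orthogonality requires an involution of the first kind), hence $\calI(A_K)=\emptyset$ and Theorem~\ref{TH:main} gives $|\gen_{S,F}(P,[f])|=1$. Your optional alternative route at the end is essentially a re-derivation of the relevant special case of Theorem~\ref{TH:main} rather than a genuine bypass, but it is not wrong.
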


    The next theorem is a general criterion that can be applied to particular examples.
    It will be used in the proofs of Theorems~\ref{TH:hereditary-orders} and~\ref{TH:non-unimodular-forms} below.

\medskip

    Recall that two idempotents $e$, $e'$ in a ring $W$ are called \emph{equivalent},
    denoted $e\approx e'$, if $eW\cong e'W$ as right $W$-modules. This is equivalent
    to the existence of $x\in e'We$ and $y\in eWe'$ such that $yx=e$ and $xy=e'$. Recall
    also
    that an idempotent $e$ is called \emph{primitive} if $eWe$ does not contain idempotents
    beside $0$ and $e$. When $W$ is semiperfect
    (see~\ref{subsection:reflections}),
    the latter is equivalent to $eWe$ being local.
    Furthermore, in this case,
    there are finitely many equivalence classes
    of primitive idempotents. More precisely,
    if $\quo{W}:=W/\Jac(W)\cong\prod_{i=1}^t\nMat{D_i}{n_i}$ with each $D_i$ a division ring,
    and if $\veps_i$ is a primitive idempotent in $\nMat{D_i}{n_i}$ (viewed as an idempotent in $\quo{W}$),
    then any family of idempotents $e_1,\dots,e_t\in W$ with $\veps_i=e_i+\Jac(W)$
    is a family of representatives for the equivalence classes.

    \begin{thm}\label{TH:enough-idems}
        Assume $A_S$ is semilocal, $A_K$ is semiperfect, and
        one of the following equivalent conditions hold:
        \begin{enumerate}
            \item[$(1)$] For every primitive idempotent $e\in A_K$,
            there is an idempotent $e'\in A_S$ with $e\approx e'$.
            \item[$(2)$] There are idempotents $e_1,\dots,e_s\in A_S$
            such that $A_K=\sum_j A_Ke_jA_K$ and $e_jA_Ke_j$ is local for all $j$.
        \end{enumerate}
        Then $|\gen_{S,F}(P,[f])|=1$ for all $(P,[f])\in\Quad[u,\Lambda]{A,\sigma}$.
    \end{thm}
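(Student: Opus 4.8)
The plan is to read off the conclusion from the formula of Theorem~\ref{TH:main}, whose hypotheses $(0)$--$(5)$ are part of the standing assumptions of this subsection. Writing $\calI:=\calI(P_K)$ and $\Delta:=\Delta_{\calI,[f_K]}$, that formula gives
\[|\gen_{S,F}(P,[f])|=\frac{|\Delta(O([f_K]))|}{|\Delta(O([f_F]))+\Delta(O([f_S]))|}.\]
Since $A_K$ is semiperfect, $\Delta$ is onto, i.e.\ $\Delta(O([f_K]))=(\Z/2\Z)^{\calI}$, exactly as in the proof of Corollary~\ref{CR:main-for-fields} (Theorem~\ref{TH:gen-by-reflections} applied after transfer). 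Hence it suffices to prove that already $\Delta(O([f_S]))=(\Z/2\Z)^{\calI}$, and for that it is enough to exhibit, for each $i\in\calI$, an isometry $\psi\in O([f_S])$ whose image $\psi_K\in O([f_K])$ satisfies $\Delta_i(\psi_K)=1$ and $\Delta_{i'}(\psi_K)=0$ for every $i'\in\calI$ with $i'\neq i$; such $\psi_K$ realize the standard basis of $(\Z/2\Z)^{\calI}$.

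The equivalence of conditions $(1)$ and $(2)$ is the standard idempotent theory of semiperfect rings recalled above: $e_jA_Ke_j$ is local exactly when the image $\quo{e_j}$ of $e_j$ in $\quo{A_K}:=A_K/\Jac(A_K)$ is primitive, $A_K=\sum_jA_Ke_jA_K$ holds exactly when the family $(\quo{e_j})_j$ meets every simple factor of $\quo{A_K}$, and for the semiperfect ring $A_K$ two idempotents are equivalent iff their images in $\quo{A_K}$ are, while every primitive idempotent of $\quo{A_K}$ lifts to $A_K$. Thus $(2)$ says that every primitive idempotent of $A_K$ is equivalent to one lying in $A_S$, which is $(1)$.

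Fix $i\in\calI$, so that the factor $(B_i,\tau_i,v_i,\Gamma_i)$ of $\quo{B}=\prod_j(B_j,\tau_j,v_j,\Gamma_j)$ is split-orthogonal with $(P_K)_i\neq0$. Using $(1)$, pick an idempotent $e\in A_S$ whose image $\quo{e}$ in $\quo{B}$ is a primitive idempotent lying in the factor $B_i$ (start from a primitive idempotent of $B_i$, lift it to $A_K$, and replace it by an equivalent one in $A_S$ via $(1)$). Because $\quo{e}$ is concentrated in $B_i$, any isometry $\psi\in O([f_S])$ for which $\psi-\id_{P_S}$ has image contained in $P_SeA_S$ has the property that $\psi_K-\id_{P_K}$, reduced modulo $\Jac(A_K)$, is supported on the factor $(\quo{P_K})_i$ — this uses that $\quo{A_K}$-linear maps respect the decomposition $\quo{P_K}=\prod_j(\quo{P_K})_j$ and that $\quo{P_K}\quo{e}\quo{A_K}\subseteq(\quo{P_K})_i$ — and hence $\Delta_{i'}(\psi_K)=0$ for all $i'\neq i$. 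So the task reduces to producing one such $\psi$ with in addition $\Delta_i(\psi_K)=1$.

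This last step is the technical core, and parallels the construction underlying the surjectivity clause of Theorem~\ref{TH:gen-by-reflections} in the semiperfect case (cf.\ \cite[Th.~5.10]{Fi14A}). The idea is that, $(P_S,[f_S])$ being unimodular and $\quo{e}$ a primitive idempotent of the split-orthogonal factor $B_i$, the submodules $P_Se$ and $P_Se^\sigma$ span a $\sigma$-stable unimodular sub-quadratic-space of $(P_S,[f_S])$ of hyperbolic type; transferring it as in the proof of Proposition~\ref{PR:transfer-preserve-split-orth} (via a suitable $\sigma$-invariant idempotent built from $e$ and $e^\sigma$) one reaches the transfer of $[(f_K)_i]$ to the field $\Cent(B_i)$, a nonzero split-orthogonal quadratic space. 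On such a space there is an isometry of Dickson invariant $1$ — a reflection, equivalently the switch of a hyperbolic plane — and by a Witt-type cancellation over the semilocal ring $A_S$ (e.g.\ \cite[Th.~II.4.6.1]{Kn91} or the results of \cite{Reiter75},~\cite{Fi14A}) it extends by the identity on an orthogonal complement to an isometry $\psi\in O([f_S])$ with $\psi-\id_{P_S}$ supported on the $e$-part and $\Delta_i(\psi_K)=1$. The main obstacle is precisely this extraction: the given idempotent $e\in A_S$ need be neither $\sigma$-invariant nor full in $A_S$, so genuine care is required to carve a $\sigma$-stable hyperbolic piece over $S$ out of it and to compute the resulting Dickson invariant. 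Once this is in place, letting $i$ range over $\calI$ gives $\Delta(O([f_S]))=(\Z/2\Z)^{\calI}$ and hence $|\gen_{S,F}(P,[f])|=1$.
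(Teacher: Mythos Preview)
Your overall strategy is correct and matches the paper's: reduce via Theorem~\ref{TH:main} to showing $\Delta(O([f_S]))=(\Z/2\Z)^{\calI}$, and for each $i\in\calI$ produce an isometry $\psi\in O([f_S])$ whose reduction is the identity on all factors $j\neq i$ and has Dickson invariant $1$ on factor $i$. Your argument for the equivalence of (1) and (2) is also essentially what the paper does.

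The gap is in your ``technical core''. You propose to carve out of $(P_S,[f_S])$ a $\sigma$-stable unimodular hyperbolic piece from $P_Se$ and $P_Se^{\sigma}$, build an isometry of Dickson invariant $1$ there, and extend by the identity via Witt cancellation. You yourself flag the obstacle --- $e$ need not be $\sigma$-invariant nor full in $A_S$ --- but you do not resolve it. Over the semilocal ring $A_S$ there is no reason why $P_Se+P_Se^{\sigma}$ should be a unimodular summand, nor why the restriction of $[f_S]$ to it should be hyperbolic; without unimodularity you have no orthogonal complement, and Witt cancellation gives you nothing. So as written the proof is incomplete exactly at the point that carries the content.

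The paper avoids this by invoking the machinery of \emph{$e$-reflections} from \cite{Fi14A}: for an idempotent $e$ (no $\sigma$-invariance assumed), an element $y\in Qe$, and a suitable $c\in \hat{g}(y)\cap e^{\sigma}Be$ inducing an isomorphism $eB\to e^{\sigma}B$, one sets $s_{y,e,c}(x)=x-yc^{\circ}\tilde{h}_g(y,x)$. This is always an isometry; if $e\approx e'$ then $e$-reflections and $e'$-reflections coincide; $e$-reflections lift from the residue ring; and over a split-orthogonal factor an $\veps$-reflection with $\veps$ primitive has Dickson invariant $1$ (\cite[Pr.~5.2]{Fi14A}). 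So after transferring to make $P$ free, the paper lifts a primitive $\veps$ in $B_i$ to $e\in A_K$, replaces $e$ by an equivalent $e'\in A_S$ via condition (1), and then checks that $[f_S]$ admits an $e'$-reflection by reducing modulo $\Jac(A_S)$ and using that $\check{e}'$-reflections exist once $\check{P}_Se_j\neq 0$ for each primitive summand $e_j$ of $\check{e}'$. This produces exactly the $\psi$ you want without ever needing a hyperbolic decomposition over $A_S$. In short: the missing ingredient is the $e$-reflection formalism, which is what makes the ``care'' you allude to unnecessary.
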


    \begin{proof}
    	Recall that $B=A_K$.
        We first prove that conditions (1) and (2) are equivalent.
        Suppose (1) holds. We can write $1_B=\sum_{j=1}^se_j$ where $e_1,\dots,e_s$ are  primitive idempotents in $B$.
        We clearly have $B=\sum_j Be_j B$ and $e_jBe_j$ is local for all $j$.
        For each $j$, choose $e'_j\in A_S$ such that $e'_j\approx e_j$. It is easy to check that $e_j\approx e'_j$
        implies $Be'_jB=Be_jB$ and $e'_jBe'_j\cong e_jBe_j$. Thus, the idempotents $e'_1,\dots,e'_s$ satisfy
        condition (2). Conversely, assume (2) holds and let $e\in B$ be a primitive
        idempotent. The assumption
        $\sum_jBe_jB=B$ is easily seen to imply that there is a projection
        $\bigoplus_j(e_jB)^{n_j}\to B$ for some $n_1,\dots,n_s\in \N$, and hence
        there is a projection $\bigoplus_j(e_jB)^{n_j}\to eB$. Let $J=\Jac(B)$.
        Then there is some $j$ for which there is a homomorphism $p:e_jB\to eB$
        whose image is not contained in $eJ$. Since $eBe$ is local, $\quo{e}:=e+J$
        is primitive in $\quo{B}=B/J$, and since $\quo{B}$ is semisimple, $eB/eJ\cong \quo{e}(B/J)$ is simple.
        Thus, $\im(p)+eJ=eB$ (because $\im(p)\nsubseteq eJ$), and by Nakayama's Lemma we get $\im(p)=eB$.
        Since $eB$ is projective, this means that $eB$ is isomorphic to a summand of $e_jB$.
        But $\End_B(e_jB)\cong e_jBe_j$ is local, so the only summands of $e_jB$ are $0$ and $e_jB$.
        Thus, $eB\cong e_jB$ and we have $e\approx e_j\in A_S$.

\smallskip

        We now prove that condition (1) implies  $|\gen_{S,F}(P,[f])|=1$.
        We may assume $P\neq 0$. Let $(Q,[g])=(P_K,[f_K])$.
        As in~\ref{subsection:reflections}, $(Q,g)$
        induces quadratic spaces $(Q_i,[g_i])\in\Quad[v_i,\Gamma_i]{B_i,\tau_i}$
        for all $1\leq i\leq t$.
        Let $\calI=\calI(P_K)$.
        By Theorem~\ref{TH:main}, it is enough to prove that
        $\Delta_{\calI,[g]}(O(P_S,[f_S]))=(\Z/2\Z)^{\calI}$.
        By Proposition~\ref{PR:compatiability-with-Dickson-inv}, we may apply transfer
        to assume that $P$ is free, and hence so is $Q$.

\smallskip

        We proceed by recalling some results and notions from \cite{Fi14A}:
        Suppose that we are given
        an idempotent $e\in B$, an element $y\in Qe$, and $c\in \hat{g}(y)\cap e^\sigma Be=(\tilde{g}(y,y)+\Gamma)\cap e^\sigma Be$
        such that multiplication on the left by $c$ induces an isomorphism
        $e B\to e^\sigma B$. The inverse of
        $x\mapsto cx:eB\to e^\sigma B$ is given by left multiplication
        with a unique element of $eBe^\sigma$, which we denote by $c^\circ$.
        In this setting, the map $s_{y,e,c}:Q\to Q$ given by
        \[
        s_{y,e,c}(x)=x-yc^\circ \tilde{h}_g(y,x)\qquad\forall x\in Q
        \]
        is called an \emph{$e$-reflection} of $[g]$. It is always an isometry.
        If $e'\in A$ is another idempotent with $e\approx e'$, then every $e$-reflection is an $e'$-reflection
        (for different $y$ and $c$).
        Furthermore, if $\phi$ is an $e$-reflection, then the induced isometry $\quo{\phi}\in O(\quo{Q},[\quo{g}])$ is an
        $\quo{e}$-reflection, and every $\quo{e}$-reflection is obtained in this way.
        Finally, we record that $\quo{e}$-reflections exist when $\quo{e}$ is primitive and $\quo{Pe}\neq 0$.
        See \cite[\S3, \S5.2]{Fi14A} for proofs.

\smallskip

        We now return to the proof: Let $i\in\calI$. Then $B_i\cong\nMat{L}{n}$ for some field
        $L$ and $n\in\N$. Let $\veps\in \nMat{L}{n}\cong B_i$ denote the idempotent matrix whose $(1,1)$-entry
        is $1$ and its other entries are $0$.
        Since $B$ is semiperfect, there is an idempotent
        $e\in B$ whose image in $\overline{B}$ is $\veps$.
        The idempotent
        $e$ is primitive, so by condition (1) there is an idempotent
        $e'\in A_S$ with $e'\approx e$.

        Suppose  $[f_S]$ has an $e'$-reflection $\phi$.
        Then $\phi_K$ is an $e'$-reflection of $[g]=[f_K]$,
        and
        since $e\approx e'$,  ${\phi_K}$ is
        also
        an $e$-reflection. This implies that the induced isometry $\phi_i\in O([g_i])$
        is an $\veps$-reflection, while $\phi_j=\id_{Q_j}$ for all $j\in\calI\setminus\{i\}$.
        By \cite[Pr.~5.2]{Fi14A}, $\Delta_{[g_i]}(\phi_i)=1+2\Z$, so
        $\Delta_{\calI,[g]}(\phi)=(\delta_{ij}+2\Z)_{j\in\calI}$ where $\delta_{ij}=1$ if $i=j$ and $\delta_{ij}=0$ otherwise.

        We now claim that $[f_S]$ has an $e'$-reflection and hence $(\delta_{ij}+2\Z)_{j\in\calI}\in \Delta_{\calI,[g]}(O([f_S]))$.
        Let $\check{\phantom{a}}$ denote reduction modulo $\Jac(A_S)$ (whenever it makes sense).
        Then it is enough to prove that $(\check{P}_S,[\check{f}_S])$ has an $\check{e}'$-reflection.
        Write $\check{e}'$ as a sum of orthogonal primitive idempotents
        in $\check{A}_S$, $\check{e}'=\sum_je_j$. It is enough to prove that $(\check{P}_S,[\check{f}_S])$ has
        an $e_j$-reflection for all $j$ because their product will be an $\check{e}'$-reflection by \cite[Lm.~3.4]{Fi14A}.
        Since $P$ is free and nonzero, $\check{P}_S$ is free and nonzero,
        and hence $\check{P}_Se_j\neq 0$ for all $j$. This implies that $e_j$-reflections exist, as required.

        Letting $i$ range over $\calI$,
        we have shown that $\Delta_{\calI,[g]}(O([f_S]))$ contains $\{(\delta_{ij}+2\Z)_{j\in\calI}\}_{i\in\calI}$.
        This set spans $(\Z/2\Z)^{\calI}$ as a group, so we are done.
    \end{proof}

    \begin{example}\label{EX:genus-one-order}
        Let $R,S=\prod_{i=1}^tS_i,F,K=\prod_{i=1}^tK_i$ be as in Example~\ref{EX:max-genus-size}. Let $\fraka$ be a
        nonzero  ideal of
        $R$, let
        \[
        A=\SMatII{R}{\fraka}{\fraka}{R}\ ,
        \]
        let $\sigma$ be the transpose involution and let $\Lambda=\{a-a^\sigma\where a\in A\}$.
        Then $(A,\sigma,1,\Lambda)$ is a unitary ring and
        \[
        B=A_K=\prod_{i=1}^t\nMat{K_i}{2},\qquad A_S=\prod_{i=1}^t\SMatII{S_i}{\fraka S_i}{\fraka S_i}{S_i}\ .
        \]
        Let $e_i$
        denote the matrix unit $e_{11}$ of $\nMat{K_i}{2}$, viewed as an element of $A_S$.
        It is easy to see that $e_iBe_i$ is local for all $i$ and $\sum_{i=1}^t Be_iB=B$.
        Thus, by Theorem~\ref{TH:enough-idems}, $|\gen_{S,F}(P,[f])|=1$
        for all $(P,[f])\in\Quad[u,\Lambda]{A,\sigma}$. (Despite this, as in Example~\ref{EX:max-genus-size},
        we have $\calI(P_K)=\{1,\dots,t\}$
        for all $0\neq P\in\rproj{A}$.)
    \end{example}

    \begin{example}
        Let $R,S,F,K$ and $(A,\sigma,u,\Lambda)$
        be as in Example~\ref{EX:max-genus-size}. Then $A_K$ is semiperfect (because it is artinian)
        and $A_S$ is semilocal, but  conditions (1) and (2) of Theorem~\ref{TH:enough-idems}
        do not hold for $A$. Indeed, if this was not so, then the computation of $|\gen_{S,F}(P,[f])|$ in Example~\ref{EX:max-genus-size}
        would contradict the conclusion of Theorem~\ref{TH:enough-idems}.

        The failure of (1) and (2) can also be checked directly:
        In the notation of Example~\ref{EX:max-genus-size}, the ring $A_{S_i}$ is local for all $1\leq i\leq t$,
        hence the only nonzero idempotent in $A_{S_i}$ is its unity, denoted $e_i$. Since $A_S=\prod_{i=1}^tA_{S_i}$,
        any idempotent $e\in A_S$ can be written as $\sum_{i\in I}e_i$ for  $I\subseteq \{1,\dots,t\}$,
        in which case $eA_Ke\cong \bigoplus_{i\in I} A_{K_i}=\prod_{i\in I}\nMat{K_i}{2}$.
        It follows that $eA_Ke$ is not local for any idempotent $0\neq e\in A_S$,
        so (2) is false. The failure of (1) can be checked by similar means.
    \end{example}

\subsection{Orders}
\label{subsection:orders}

    Assume henceforth that $R$ is a semilocal PID with maximal ideals
    $\frakp_1,\dots,\frakp_t$ and fraction field $F$. As in Example~\ref{EX:max-genus-size},
    we set
    $S=\prod_{i=1}^t \hat{R}_{\frakp_i}$, $K=\prod_{i=1}^t\hat{F}_{\frakp_i}$ and $\hat{R}_0=F$.
    We further assume that $R\neq F$.

\medskip

    Recall that an \emph{$R$-order} in an $F$-algebra $E$ is an $R$-subalgebra $A\subseteq E$ which is finitely
    generated as an $R$-module and contains an $F$-basis of $E$.
    Since $R$ is a  PID, an $R$-algebra $A$ is an $R$-order in some $F$-algebra (necessarily isomorphic to $A_F$)
    if and only if $A$ is a finitely
    generated free $R$-module.

    An $R$-order $A$ is called \emph{hereditary} if  its one-sided ideals
    are projective as $A$-modules.
    For example, $R$ itself is a (two-sided) hereditary because it is a PID.
    The structure of hereditary $R$-orders is  well-understood and we refer the reader
    to \cite[Ch.~9]{MaximalOrders} for further details.

     \begin{example}
        (i) An $R$-order $A$ is \emph{maximal} if it is not properly
        contained in any $R$-order inside $A_F$. It is well-known that
        maximal $R$-orders are hereditary \cite[Cor.~1.6.0]{HijNish94}.

        (ii) Certain \emph{weak crossed products} over $R$ (i.e.\ crossed products defined using $2$-cocycles taking non-invertible values)
        are hereditary (or \emph{semihereditary}); see \cite{Kauta14} and related papers.

        (iii) Let $\pi\in R$ be a prime element. Then
        \[
        A=\SMatII{R}{\pi R}{R}{R}
        \]
        is a non-maximal  hereditary order.
        This can be checked directly, or by using the results in \cite[Ch.~9]{MaximalOrders}.

        (iv) The $R$-order of Example~\ref{RM:infinite-rational-genus} is not hereditary.
        The $R$-order $A$ of Example~\ref{EX:genus-one-order} is not hereditary
        unless $\fraka=R$.
    \end{example}

    We shall make extensive usage of the following theorem.

	\begin{thm}\label{TH:local-hereditarity}
        Let $A$ be an $R$-order. Then
        $A$ is hereditary if and only if $A_{\hat{R}_{\frakp}}$ is hereditary for all
        $0\neq\frakp\in\Spec R$ (i.e.\ if $A_S$ is hereditary).
        In this case, $A_F$ and
        $A_{\hat{F}_\frakp}$ are semisimple for all  $0\neq\frakp\in\Spec R$. 
    \end{thm}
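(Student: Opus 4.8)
The plan is to reduce everything to standard homological facts about orders over a complete discrete valuation ring, by way of the localization–completion dictionary. Write $\frakp_1,\dots,\frakp_t$ for the maximal ideals of $R$ (there is at least one since $R\neq F$), and recall that an order is hereditary exactly when it has global dimension $\le 1$. Since $A$ is module-finite over the Noetherian ring $R$, the projective dimension of a finitely generated $A$-module may be computed after localizing at the $\frakp_i$, and the global dimension of a module-finite algebra over a Noetherian local ring is preserved upon completing the base ring; hence $\mathrm{gl.dim}(A)=\max_i\mathrm{gl.dim}(A_{R_{\frakp_i}})=\max_i\mathrm{gl.dim}(A_{\hat R_{\frakp_i}})$. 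As $A_S\cong\prod_iA_{\hat R_{\frakp_i}}$, whose global dimension is the maximum of those of its factors, we conclude that $A$ is hereditary if and only if $A_S$ is, which is the first assertion. (These facts are classical; see \cite[Ch.~9]{MaximalOrders} and the standard homological literature.)

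For the second assertion I would first reduce the semisimplicity of $A_F$ and of the $A_{\hat F_{\frakp}}$ to a single local statement. We have $A_F\otimes_F\hat F_{\frakp}\cong A_{\hat F_{\frakp}}\cong(A_{\hat R_{\frakp}})_{\hat F_{\frakp}}$, and $\Jac(A_F)\otimes_F\hat F_{\frakp}$ is a nilpotent two-sided ideal of $A_{\hat F_{\frakp}}$, hence lies in $\Jac(A_{\hat F_{\frakp}})$; so if $A_{\hat F_{\frakp}}$ is semisimple then $\Jac(A_F)\otimes_F\hat F_{\frakp}=0$, and faithful flatness of $\hat F_{\frakp}/F$ forces $\Jac(A_F)=0$. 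Applying the same reasoning with $\hat F_{\frakp}$ in place of $F$ shows that each $A_{\hat F_{\frakp}}$ is semisimple as soon as it has trivial Jacobson radical. Since, by the first part, $\Lambda:=A_{\hat R_{\frakp}}$ is hereditary, it therefore suffices to prove: \emph{if $\Lambda$ is a module-finite, $\mathcal O$-free, hereditary algebra over a complete discrete valuation ring $\mathcal O$ with uniformizer $\pi$ and fraction field $\mathcal K$, then $\Lambda_{\mathcal K}$ is semisimple.}

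To prove this I argue by contradiction. If $\Lambda_{\mathcal K}$ is not semisimple then $\Jac(\Lambda_{\mathcal K})\neq 0$, and since a Jacobson radical contains no nonzero idempotent, $\Lambda_{\mathcal K}/\Jac(\Lambda_{\mathcal K})$ is not projective over $\Lambda_{\mathcal K}$; pick a non-projective simple summand $V$. Choosing $0\neq v\in V$, the lattice $M:=\Lambda v$ is a full $\Lambda$-lattice in $V$ and is $\mathcal O$-free (it embeds in the $\mathcal K$-space $V$). Then $M$ is not projective over $\Lambda$, for otherwise $M_{\mathcal K}=V$ would be projective over $\Lambda_{\mathcal K}$. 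Because $\mathcal O$ is complete, $\Lambda$ is semiperfect and $\pi\in\Jac(\Lambda)$ (as $\Jac(\mathcal O)\Lambda\subseteq\Jac(\Lambda)$), so projective $\Lambda/\pi\Lambda$-modules lift to projective $\Lambda$-modules; a routine lifting-plus-Nakayama argument then shows that a $\Lambda$-lattice is projective over $\Lambda$ if and only if its reduction modulo $\pi$ is projective over $\Lambda/\pi\Lambda$. Hence $M/\pi M$ is not projective over $\Lambda/\pi\Lambda$, i.e.\ $\mathrm{pd}_{\Lambda/\pi\Lambda}(M/\pi M)\ge 1$. Finally $\pi$ is a central non-zero-divisor of $\Lambda$ lying in $\Jac(\Lambda)$, so the (second) change-of-rings theorem gives $\mathrm{pd}_\Lambda(M/\pi M)=\mathrm{pd}_{\Lambda/\pi\Lambda}(M/\pi M)+1\ge 2$, whence $\mathrm{gl.dim}(\Lambda)\ge 2$ — contradicting hereditariness. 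So $\Lambda_{\mathcal K}$ is semisimple, and the theorem follows.

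I expect the main obstacle to be marshalling the classical homological inputs correctly and keeping careful track of which statements genuinely require $\mathcal O$ to be complete: that localization and completion of the base preserve global dimension for module-finite algebras; that projectivity of a lattice over a complete-local order is detected modulo $\pi$ (this rests on lifting idempotents/projectives and fails over a non-complete discrete valuation ring, so the reduction to $\hat R_{\frakp}$ is essential); and the correct form of the change-of-rings theorem for the central non-zero-divisor $\pi\in\Jac(\Lambda)$. Each ingredient is standard, but the bookkeeping is where care is needed. Alternatively, the semisimplicity of $\Lambda_{\mathcal K}$ can be read off directly from the classification of hereditary orders over complete discrete valuation rings in \cite[Ch.~9]{MaximalOrders}.
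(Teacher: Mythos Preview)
Your proof is correct and takes a genuinely different route from the paper's.

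For the equivalence of hereditariness, the paper reduces to a DVR $R$ via \cite{AusGold60B} and then invokes the \emph{extremal} characterization of hereditary orders due to Hijikata--Nishida: an order is hereditary iff there is no strictly larger order with larger Jacobson radical. Since the lattice correspondence $L\mapsto L\otimes_R\hat R$ between full $R$-lattices in $A_F$ and full $\hat R$-lattices in $A_{\hat F}$ respects Jacobson radicals, extremality (hence hereditariness) passes to and from the completion. For the semisimplicity of $A_F$ and $A_{\hat F_\frakp}$, the paper simply cites \cite[Th.~1.7.1]{HijNish94}. Your argument, by contrast, is entirely homological: hereditariness is $\mathrm{gl.dim}\le 1$, and you pass through localization and completion via standard change-of-base facts; for semisimplicity you produce a non-projective lattice $M$ and use a change-of-rings inequality on $M/\pi M$ to force $\mathrm{gl.dim}\ge 2$. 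Your approach is more self-contained (no appeal to the structure theory of hereditary orders) and arguably more portable; the paper's approach is shorter once one accepts the cited structural results.

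One point worth flagging explicitly, since the paper itself footnotes it: the assertion that $\mathrm{gl.dim}(A_{R_\frakp})=\mathrm{gl.dim}(A_{\hat R_\frakp})$ is precisely where a naive argument (as in Reiner's \cite[Th.~3.30]{MaximalOrders}) can slip, because not every finitely generated $A_{\hat R_\frakp}$-module arises by extension from an $A_{R_\frakp}$-module. The clean justification is that $A_{\hat R_\frakp}$ is semiperfect, so its global dimension equals $\sup_S\mathrm{pd}(S)$ over \emph{simple} modules $S$; these coincide with the simple $A_{R_\frakp}$-modules (both live over the common Artinian quotient $A/\frakp A$), and for each such $S$ one has $\mathrm{pd}_{A_{\hat R_\frakp}}(S)\le \mathrm{pd}_{A_{R_\frakp}}(S)$ by flat base change. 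You correctly anticipated that the bookkeeping here is where care is needed; spelling out this step would make your argument airtight. (A minor notational remark: you reuse $\Lambda$ for the completed order, which clashes with the paper's use of $\Lambda$ for the form parameter.)
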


    \begin{proof}
        By \cite[p.~8]{AusGold60B}, the order $A$ is hereditary
    	if and only if $A_{R_\frakp}$ is hereditary for all $0\neq \frakp\in\Spec R$.
        We may therefore assume $R$ is a DVR. Write $\hat{R}$ and $\hat{F}$ for the completions
        of $R$ and $F$.
        Recall that the order $A$ is called \emph{extremal} if there is no order $A\subsetneq A' \subseteq A_F$
        with $\Jac(A)\subseteq\Jac(A')$. By \cite[Th.~1.6]{HijNish94}, the hereditary orders
        are precisely the extremal orders.
        Observe that tensoring with $\hat{R}$ induces a bijection between the full $R$-lattices
        in $A_F$ (i.e.\ finitely generated $R$-modules containing an $F$-basis of $A_F$) and the full $\hat{R}$-lattices
        in $A_{\hat{F}}$; see \cite[Th.~5.2(ii)]{MaximalOrders}, for instance.
        This bijection clearly takes $R$-orders to $\hat{R}$-orders, maximal left ideals to maximal left ideals, and hence the Jacobson
        radical of an $R$-order $A'\subseteq A_F$ to the Jacobson radical of $A'_{\hat{R}}$. As a result, $A$ is extremal if
        and only if $A_{\hat{R}}$ is extremal. This proves the first statement of the theorem.\footnote{
    		This statement also follows
            from
    		\cite[Th.~3.30]{MaximalOrders}. However, the proof there
            seems to have a gap because it assumes that all $A_{\hat{R}_\frakp}$-modules
            are obtained from $A$-modules via scalar extension, which is false in general.
        }

        That $A_F$ and $A_{\hat{F}_\frakp}$ are semisimple follows from \cite[Th.~1.7.1]{HijNish94}, stating that any
        finite-dimensional $F$-algebra
        with a hereditary $R$-order is semisimple.
    \end{proof}

    The following theorem is a special case of Theorem~\ref{TH:enough-idems}.

    \begin{thm}\label{TH:hereditary-orders}
        Let $(A,\sigma,u,\Lambda)$ be a unitary $R$-algebra
        such that $A$ is a hereditary $R$-order.
        Then $|\gen_{\{F,S\}}(P,[f])|=1$ for all $(P,[f])\in\Quad[u,\Lambda]{A,\sigma}$.
    \end{thm}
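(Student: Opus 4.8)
The plan is to deduce Theorem~\ref{TH:hereditary-orders} from Theorem~\ref{TH:enough-idems} by checking its hypotheses. First I would confirm that we are in the setting of Theorem~\ref{TH:main}: conditions $(0)$--$(5)$ were already shown to hold in Example~\ref{EX:basic-scenario} for every torsion-free unitary $R$-algebra $(A,\sigma,u,\Lambda)$ with $\dim_F A_F<\infty$, and an $R$-order over the semilocal PID $R$ is by definition a finitely generated free $R$-module (hence flat and torsion-free) whose scalar extension $A_F=A\otimes_R F$ is finite-dimensional over $F$. Thus $\gen_{\{F,S\}}(P,[f])=\gen_{S,F}(P,[f])$ and both Theorem~\ref{TH:main} and Theorem~\ref{TH:enough-idems} are available.

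Next I would verify the ring-theoretic hypotheses of Theorem~\ref{TH:enough-idems}. Since $A_S=\prod_i A_{\hat{R}_{\frakp_i}}$ is module-finite over the complete semilocal ring $S$, it is semiperfect, in particular semilocal; likewise $A_K=\prod_i A_{\hat{F}_{\frakp_i}}$ is module-finite over the artinian ring $K$, hence semiperfect. Moreover, by Theorem~\ref{TH:local-hereditarity}, $A_S$ is again a hereditary order and $A_K$ is semisimple. It then remains to produce idempotents $e_1,\dots,e_s\in A_S$ with $A_K=\sum_j A_K e_j A_K$ and each $e_jA_Ke_j$ local, i.e.\ condition $(2)$ of Theorem~\ref{TH:enough-idems}.

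For this I would take a decomposition $1_{A_S}=e_1+\dots+e_s$ of the identity of the semiperfect ring $A_S$ into orthogonal primitive idempotents. Then $\sum_j e_j=1_{A_K}$ immediately gives $A_K=\sum_j A_K e_j A_K$, so the only thing to prove is that $e_jA_Ke_j$ is local for each $j$. As $A_K$ is semisimple, $e_jA_Ke_j\cong\End_{A_K}(e_jA_K)$ is local exactly when $e_jA_K$ is a simple $A_K$-module (Schur's lemma); and since a primitive idempotent of $A_S=\prod_i A_{\hat{R}_{\frakp_i}}$ lies in a single factor $A_{\hat{R}_{\frakp_i}}$ (which is hereditary over the complete DVR $\hat{R}_{\frakp_i}$ by Theorem~\ref{TH:local-hereditarity}), this reduces componentwise to the statement: if $\Lambda$ is a hereditary order over a complete discrete valuation ring $\mathcal{O}$ with fraction field $L$ and $e\in\Lambda$ is a primitive idempotent, then $e\Lambda_L$ is a simple $\Lambda_L$-module. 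Here I would invoke the classification of hereditary orders over complete DVRs: $\Lambda$ splits according to the simple components of $\Lambda_L$, and on each component $\Lambda$ is conjugate inside $\Lambda_L=\nMat{D}{n}$ to a standard (``tiled'') hereditary order, from whose explicit shape one reads off directly that every indecomposable projective $e\Lambda$ is a full $\mathcal{O}$-lattice in the standard simple right $\nMat{D}{n}$-module, so that $e\Lambda_L$ is simple; see \cite[Ch.~9]{MaximalOrders}. With condition $(2)$ in hand, Theorem~\ref{TH:enough-idems} yields $|\gen_{\{F,S\}}(P,[f])|=1$.

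The only non-formal step is the last claim — that indecomposable projective lattices over a hereditary order become simple after passing to the generic fibre — and this is precisely where hereditariness enters essentially; everything else is a routine check of the hypotheses of Theorem~\ref{TH:enough-idems} against the standard properties of orders over a semilocal PID recalled in Example~\ref{EX:basic-scenario} and Theorem~\ref{TH:local-hereditarity}.
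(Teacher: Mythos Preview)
Your proof is correct and, like the paper, reduces to Theorem~\ref{TH:enough-idems}; the difference lies in which of the two equivalent conditions you verify and how.

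The paper checks condition~(1): given a primitive idempotent $e\in A_K$, it sets $I=A_S\cap eA_K$ and observes that $A_S/I$ embeds into $(1-e)A_K$, hence is a torsion-free $A_S$-module sitting inside some $tA_S$ with $t\in\units{F}$; hereditariness of $A_S$ (Theorem~\ref{TH:local-hereditarity}) then makes $A_S/I$ projective, the exact sequence $0\to I\to A_S\to A_S/I\to 0$ splits, and the resulting idempotent $e'\in A_S$ satisfies $e'A_K=eA_K$, i.e.\ $e'\approx e$. This uses nothing beyond the definition of hereditariness.

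You instead check condition~(2), decomposing $1_{A_S}$ into primitive idempotents of the semiperfect ring $A_S$ and then invoking the tiled-order classification of hereditary orders over complete DVRs \cite[Ch.~9]{MaximalOrders} to conclude that each indecomposable projective $e_j A_S$ becomes simple over $A_K$. This is valid, but it imports more structure theory than necessary. In fact the paper's ``intersect with $A_S$ and split'' trick proves your key lemma directly: if $e_jA_K$ were not simple, pick a proper nonzero submodule $V\subsetneq e_jA_K$, set $M=e_jA_S\cap V$; then $e_jA_S/M$ embeds in $e_jA_K/V$, is torsion-free, hence projective, so $e_jA_S$ decomposes nontrivially, contradicting primitivity of $e_j$. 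So your route and the paper's are really two sides of the same elementary observation, and you could dispense with the citation to the classification.
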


    \begin{proof}
        It is clear that $A_K$ is semiperfect,
        $A_S$ is semilocal, and conditions (1)--(5) of Theorem~\ref{TH:main}
        are satisfied. As in Corollary~\ref{CR:main-for-fields}, we can ignore
        condition (0) of Theorem~\ref{TH:main}.
        It is therefore enough to verify condition (1) of Theorem~\ref{TH:enough-idems}.

        By Theorem~\ref{TH:local-hereditarity}, $A_S$ is hereditary.
        Let $e$ be  a primitive idempotent in $A_K$, and let $I=A_S\cap eA_K$.
        Define a right $A_S$-module
        homomorphism  $\psi:A_S/I\to (1-e)A_K$ by $\psi(a+I)= (1-e)x$ (this is well-defined
        because $(1-e)I=0$). We claim that $\psi$
        is injective. Indeed, if $\psi(a+I)=0$, then $(1-e)a=0$ and hence $a=ea$, which
        means $a\in I$. Since $A_S/I$ is a cyclic $A_S$-module and $K=F\cdot S$, the image of $\psi$
        is contained in $tA_S$ for some $t\in F$.
        Therefore,
        $A_S/I$ is isomorphic to a right ideal of $A_S$. Since $A_S$ is hereditary,
        $A_S/I$ is projective, so the exact sequence $0\to I\to A_S\to A_S/I\to 0$
        splits. We therefore have $A_S=I\oplus J$ where $J$ is some right ideal of $A_S$.
        Write $1=e'+a$ with $e'\in I$ and $a\in J$. It is easy to check that $e'$ is an idempotent
        satisfying $e'A_S=I$ and $e'A_K=eA_K$, so $e'\approx e$.
    \end{proof}

    \begin{cor}
        Keep the assumptions of Theorem~\ref{TH:hereditary-orders}
        and let $(P,[f]),(P',[f'])\in\Quad[u,\Lambda]{A,\sigma}$.
        For all $\frakp\in \Spec R$, let $k(\frakp)$ denote the fraction
        field of $R/\frakp$.
        Then $(P,[f])\cong (P',[f'])$ if and only if $(P_{k(\frakp)},[f_{k(\frakp)}])\cong (P'_{k(\frakp)},[f'_{k(\frakp)}])$
        for all $\frakp\in \Spec R$ (including $\frakp=0$).
    \end{cor}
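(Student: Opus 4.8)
The plan is to deduce this from the genus–triviality result Theorem~\ref{TH:hereditary-orders}, using the lifting principle recorded in Example~\ref{EX:basic-scenario}. The forward implication is immediate: scalar extension sends isometric quadratic spaces to isometric ones, so $(P,[f])\cong(P',[f'])$ gives $(P_{k(\frakp)},[f_{k(\frakp)}])\cong(P'_{k(\frakp)},[f'_{k(\frakp)}])$ for every $\frakp$. The content is the converse, and for it I would show that the hypothesis forces $(P',[f'])\in\gen_{S,F}(P,[f])$, after which Theorem~\ref{TH:hereditary-orders} — which applies verbatim here, since $A$ is a hereditary $R$-order — yields $|\gen_{S,F}(P,[f])|=1$ and hence $(P,[f])\cong(P',[f'])$.

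To run the converse, first note that, as $R$ is a PID and $A$ is an $R$-order, $A$ is a finitely generated free, hence projective, $R$-module; so the lifting argument cited in Example~\ref{EX:basic-scenario} (\cite[Th.~2.2(2)]{QuSchSch79} or \cite[Th.~II.4.6.1]{Kn91}) is available, giving $\gen_{S,F}(P,[f])=\gen_{\{k(\frakp)\,:\,\frakp\in\Spec R\}}(P,[f])$. Concretely: for $\frakp=0$ one has $k(0)=F=\hat{R}_0$, so the hypothesis directly supplies $(P_F,[f_F])\cong(P'_F,[f'_F])$; for each maximal $\frakp=\frakp_i$ the quotient $R/\frakp_i$ is a field, so $k(\frakp_i)=R/\frakp_i$ is the residue field of the complete DVR $\hat{R}_{\frakp_i}$, and the lifting lemma upgrades the given isometry over $k(\frakp_i)$ to one over $\hat{R}_{\frakp_i}$. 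Since $S=\prod_{i=1}^t\hat{R}_{\frakp_i}$ and a quadratic space over a finite product of rings is the product of its components, these local isometries assemble into $(P_S,[f_S])\cong(P'_S,[f'_S])$. Thus $(P',[f'])\in\gen_{S,F}(P,[f])$, and Theorem~\ref{TH:hereditary-orders} finishes the proof.

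The only point requiring attention — rather than a genuine obstacle — is bookkeeping: one must confirm that the hypotheses of the lifting lemma hold (finitely generated projectivity of $A$ over $R$, which is automatic for an order over a PID) and that the definition of $k(\frakp)$ in the corollary (the fraction field of $R/\frakp$) coincides with the residue field appearing in that lemma, which it does since each $\frakp_i$ is maximal. Everything else is a direct invocation of Theorem~\ref{TH:hereditary-orders} and Example~\ref{EX:basic-scenario}.
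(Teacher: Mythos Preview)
Your proof is correct and follows essentially the same approach as the paper: both argue that $A\in\rproj{R}$, invoke Example~\ref{EX:basic-scenario} to identify $\gen_{S,F}(P,[f])$ with $\gen_{\{k(\frakp)\,:\,\frakp\in\Spec R\}}(P,[f])$, and then apply Theorem~\ref{TH:hereditary-orders}. Your write-up simply unpacks the details (forward implication, the assembly over $S=\prod_i\hat{R}_{\frakp_i}$, the verification that $k(\frakp_i)$ is the relevant residue field) that the paper leaves implicit.
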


    \begin{proof}
        We have $A\in\rproj{R}$ by assumption.
        Thus, as noted in Example~\ref{EX:basic-scenario},
        $\gen_{S,F}(P,[f])=\gen_{\{k(\frakp)\where \frakp\in\Spec R\}}(P,[f])$.
        The corollary therefore follows from Theorem~\ref{TH:hereditary-orders}.
    \end{proof}

    \begin{remark}
        Theorem~\ref{TH:hereditary-orders} actually
        holds in the general setting of Theorem~\ref{TH:main}
        if one assumes that $A_K$ is semiperfect, $A_S$ is semilocal and \emph{semi}hereditary, and  $K=U^{-1}S$
        where $U=\units{K}\cap S$. The proof is essentially the same.
    \end{remark}

\section{Further Quadratic Objects}
\label{section:further-quadratic-objects}

    We keep the notation of~\ref{subsection:orders}.
    In this final section, we extend Theorems~\ref{TH:main} to systems of sesquilinear
    forms and Theorem~\ref{TH:hereditary-orders} to \emph{non-unimodular} hermitian forms.
    This will be done using results from \cite{BayerFain96}, \cite{BayerMold12}
    and \cite{BayFiMol13}. As an application, we prove that Witt's Cancellation Theorem
    and Springer's Theorem hold for hermitian forms over hereditary orders.

    \emph{In contrast to the previous sections, the results of this section require that $2\in\units{R}$.}

\subsection{Systems of Sesquilinear Forms}
\label{subsection:systems}

    Let $A$ be an $R$-order (see~\ref{subsection:orders})
    and let $\{\sigma_i\}_{i\in I}$ be a family of $R$-involutions on $A$.
    Then each involution $\sigma_i$ induces a duality
    $*_i:\rproj{A}\to \rproj{A}$ as in~\ref{subsection:herm-forms}.
    A \emph{system of sesquilinear forms} over $(A,\{\sigma_i\}_{i\in I})$
    is a pair $(P,\{f_i\}_{i\in I})$
    such that $(P,f_i)$ is a sesquilinear form over $(A,\sigma_i)$ for all $i\in I$.
    If $(P',\{f'_i\}_{i\in I})$ is another system, then
    an isometry from $(P,\{f_i\})$ to $(P',\{f'_i\})$ is
    an isomorphism of $A$-modules $\phi:P\to P'$
    such that $f_i=\phi^{*_i}f'_i\phi$ for all $i\in I$.

    The notion of genus naturally extends to systems of sesquilinear forms over
    $(A,\{\sigma_i\}_{i\in I})$. For brevity, we let
    \[
    \gen(P,\{f_i\})=\gen_{\{\hat{R}_{\frakp}\where \frakp\in\Spec R\}}(P,\{f_i\})
    \]
    where $\hat{R}_0=F$.

    \begin{thm}\label{TH:main-for-systems}
        Assume $2\in \units{R}$. Then,
        in the previous setting, $|\gen(P,\{f_i\})|$ is  a finite power of $2$.
    \end{thm}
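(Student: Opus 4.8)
The plan is to reduce the statement to the case of unimodular quadratic spaces over a unitary $R$-algebra and then apply Corollary~\ref{CR:main-for-fields}. The reduction is provided by the passage from systems of sesquilinear forms to hermitian forms developed in \cite{BayerFain96}, \cite{BayerMold12} and \cite{BayFiMol13}. Since $2\in\units{R}$, every sesquilinear form splits into its hermitian and antihermitian parts, and the cited results attach to the system $(P,\{f_i\}_{i\in I})$ --- by means of a construction in a suitable hermitian category, followed by transfer into an endomorphism ring (see~\ref{subsection:transfer} and \cite[\S{}II.3]{Kn91}) --- a unitary $R$-algebra $(A',\sigma',u',\Lambda')$ with $A'$ finitely generated as an $R$-module, together with a unimodular $u'$-hermitian space $(P',f')$ over $(A',\sigma')$, in such a way that the construction commutes with extension of scalars to any commutative $R$-algebra and reflects and preserves isometry over every such algebra. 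In particular, the induced injection on isometry classes restricts to a bijection between $\gen(P,\{f_i\})$ and the genus of $(P',f')$. Being finitely generated over the PID $R$, the module $A'$ is free, hence flat, and $\dim_F A'_F<\infty$; since $2\in\units{R}\subseteq\units{A'}$, the remark following Theorem~\ref{TH:density} shows that $\Lambda'$ is a direct $R$-summand of $A'$, so $(P',f')$ may equally be regarded as a unimodular quadratic space $(P',[f'])$ over $(A',\sigma',u',\Lambda')$ (cf.\ Remark~\ref{RM:two-is-invertible}). With $R$, $S=\prod_i\hat{R}_{\frakp_i}$, $F$ and $K=\prod_i\hat{F}_{\frakp_i}$ as in~\ref{subsection:orders}, and using Example~\ref{EX:basic-scenario} (valid since $A'\in\rproj{R}$), this gives
\[
\gen(P,\{f_i\}) \;=\; \gen_{\{\hat{R}_{\frakp}\where \frakp\in\Spec R\}}(P,\{f_i\}) \;\cong\; \gen_{S,F}(P',[f'])\ .
\]

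Next I would apply Corollary~\ref{CR:main-for-fields} to $(A',\sigma',u',\Lambda')$. Its hypotheses all hold: $F$ is the fraction field of the semilocal PID $R$, hence a field; $K$ is a finite product of fields, hence a semisimple Hausdorff topological ring in which $\units{K}$ is open and inversion is continuous; $S$ is an open subring of $K$ with $S\cap F=R$; $F$ is dense in $K$; $A'$ is flat over $R$; and $\dim_F A'_F<\infty$. Writing $\calI=\calI(P'_K)$ and $\Delta=\Delta_{\calI,[f'_K]}$, the corollary yields
\[
|\gen(P,\{f_i\})| \;=\; |\gen_{S,F}(P',[f'])| \;=\; \frac{2^{|\calI|}}{\big|\Delta(O([f'_F]))+ \Delta(O([f'_S]))\big|}\ .
\]
Now $A'_K$ is finite-dimensional over the product of fields $K$, hence artinian with only finitely many simple components, so $\calI$ is finite; and $\Delta(O([f'_F]))+\Delta(O([f'_S]))$ is a subgroup of the elementary abelian $2$-group $(\Z/2\Z)^{\calI}$, hence of order $2^k$ for some $0\le k\le|\calI|$. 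Therefore $|\gen(P,\{f_i\})|=2^{|\calI|-k}$ is a finite power of $2$, which is the assertion.

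The substantive step is the first one: importing from \cite{BayerFain96}, \cite{BayerMold12} and \cite{BayFiMol13} a hermitian-category reduction that is simultaneously compatible with arbitrary scalar extension --- this is what turns ``isometry over every completion'' into ``genus corresponds to genus'' --- faithful enough to reflect isometry over every base ring, and whose target is a unitary $R$-algebra whose underlying module is finitely generated over $R$, so that the flatness, finiteness and $\Lambda$-summand hypotheses required by Theorem~\ref{TH:main}/Corollary~\ref{CR:main-for-fields} are met. The hypothesis $2\in\units{R}$ is used there, to split sesquilinear forms into their hermitian and antihermitian parts so that the reduction to hermitian forms is available, and again in the last step, to ensure that $\Lambda'$ is a direct summand --- equivalently, that hermitian and quadratic forms over $(A',\sigma',u')$ coincide.
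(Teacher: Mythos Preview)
Your overall strategy matches the paper's: pass from systems of sesquilinear forms to unimodular hermitian forms in a hermitian category via \cite[Th.~4.1]{BayFiMol13}, then transfer into an endomorphism ring to land in Corollary~\ref{CR:main-for-fields}. However, you compress what in the paper is a genuine two-step argument, and in doing so you skip over a real obstacle.

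The problem is the sentence ``the induced injection on isometry classes restricts to a bijection between $\gen(P,\{f_i\})$ and the genus of $(P',f')$.'' Transfer into an endomorphism ring (Proposition~\ref{PR:transfer}, or its hermitian-category version in \cite[\S2C]{BayFiMol13}) is only an equivalence on the subcategory $\catD|_Q$ of objects that are summands of powers of the chosen $Q$; it says nothing about a form $(Q'',g'')$ in the genus whose base object is not \emph{a priori} of this shape. Hence you cannot directly conclude that the whole genus is carried to a genus over one fixed order $(A',\sigma')$. The paper deals with this in two passes: first, for a given $(Q'',g'')$ in the genus, it transfers with respect to an object large enough to contain both $Q$ and $Q''$ (so the endomorphism ring is still an $R$-order) and invokes the ``$P'\cong P$'' clause of Corollary~\ref{CR:main-for-fields} to deduce $Q''\cong Q$; only then does a second transfer, now with respect to $(Q,g)$ itself, send the \emph{entire} genus into forms over the fixed order $A''=\End_{\catD}(Q)$, where Corollary~\ref{CR:main-for-fields} yields the power-of-$2$ bound. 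Your write-up needs this intermediate step.

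A minor point: ``finitely generated over the PID $R$'' does not by itself give freeness of $A'$; you also need torsion-freeness. That does hold here, since the endomorphism rings arising in $\catD$ embed in $\End_A(P_1)\times\End_A(P_2)^{\op}$ for suitable $P_1,P_2\in\rproj{A}$ and are therefore $R$-orders, but it should be said.
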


    The proof uses the language of \emph{hermitian categories}. Our notation will
    follow
    \cite[\S2,\S4]{BayFiMol13} and we refer the reader to this source for definitions;
    see also \cite{QuSchSch79}, \cite[Ch.~7]{SchQuadraticAndHermitianForms}, \cite[Ch.~II]{Kn91} or \cite{Bal05}.

    \begin{proof}
        By \cite[Th.~4.1]{BayFiMol13}, there is an isomorphism between
        the category
        of systems sesquilinear forms over $(A,\{\sigma_i\})$ and the category
        of unimodular $1$-hermitian forms over a certain hermitian category, which we denote by $\catD$
        (in \cite{BayFiMol13}, this category is denoted by $\TDA[I]{\catC}$ where $\catC=\rproj{A}$).
        Furthermore, this isomorphism is compatible with flat scalar extension over $R$ (\cite[Cr.~4.4]{BayFiMol13}
        and the comment before it).
        It is therefore enough to prove the theorem for \emph{unimodular} $1$-hermitian forms over the hermitian
        category $\catD$.

        Let $(Q,g)$ and $(Q',g')$ be two $1$-hermitian forms over $\catD$ with the same genus.
        We claim that $Q\cong Q'$.
        Indeed, by applying transfer in hermitian categories (see \cite[\S2C, \S2E]{BayFiMol13}),
        we may assume $(Q,g)$ and $(Q',g')$ are $1$-hermitian forms over a ring with involution $(A',\sigma')$
        where $A'$ is the endomorphism ring of some object in $\catD$.
        By the construction of $\catD$, $A'$ an $R$-subalgebra of $\End_{A}(P_1)\times\End_{A}(P_2)^\op$
        for some $P_1,P_2\in\rproj{A}$ (see \cite[\S4]{BayFiMol13}),
        and hence an $R$-order. In addition, since the hermitian structure on $\catD$
        is $R$-linear, $\sigma'$ is  an $R$-involution.
        Now, since $2\in\units{R}$, $1$-hermitian forms are equivalent to quadratic
        spaces over $(A',\sigma',1,\Lambda^{\max}(1))$ (Remark~\ref{RM:two-is-invertible}).
        Thus, by Corollary~\ref{CR:main-for-fields}, $Q'\cong Q$.

        This implies that for any $(Q',g')\in\gen(Q,g)$, we have $Q'\in\catD|_Q$ (see \cite[\S2C]{BayFiMol13}
        for the definition). In particular, $\gen(Q,g)$ is contained
        in the category of unimodular $1$-hermitian forms over $\catD|_Q$. The transfer functor with respect
        to $(Q,g)$ induces
        an equivalence between this category and the category of unimodular
        $1$-hermitian forms over $(A'',\sigma'')$, where $A''=\End_{\catD}(Q)$
        and $\sigma''$ is induced from $g$.  As before, $(A'',\sigma'')$ is an
        $R$-order with an $R$-involution, so we are reduced to the case where $(Q,g)$ is a $1$-hermitian
        form over an $R$-order with an $R$-involution. Since $2\in\units{R}$,
        we may apply Corollary~\ref{CR:main-for-fields} and conclude that
        $\gen(Q,g)$ is a finite power of $2$.
    \end{proof}

    One can guarantee that $|\gen(P,\{f_i\})|=1$
    in case all the involutions $\{\sigma_i\}_{i\in I}$ restrict to a particular
    involution on $\Cent(A)$ which is  of the second kind:
    Let $R'/R$ be a Galois extension (of commutative
    rings) with Galois group $\{\id,\tau\}$ ($\tau\neq\id$); see \cite[Apx.]{AusGold60A} or \cite{Sa99}
    for the general definition. In our case, this  is equivalent to
    $R'\cong R[x\,|\, x^2=ax+b]$ for $a,b\in R$ satisfying $a^2+4b\in\units{R}$, in which case $\tau$ is
    the $R$-automorphism of $R'$ sending $x$ to $a-x$.

    \begin{thm}\label{TH:second-kind-systems}
        Keep the previous setting and assume in addition that $A$ is an $R'$-algebra
        and $\sigma_i|_{R'}=\tau$ for all $i\in I$. Then $|\gen(P,\{f_i\})|=1$
        for every system of sesquilinear forms $(P,\{f_i\})$ over $(A,\{\sigma_i\})$.
    \end{thm}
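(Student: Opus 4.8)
The plan is to refine the proof of Theorem~\ref{TH:main-for-systems}, which already gives that $|\gen(P,\{f_i\})|$ is a power of~$2$; the extra input is the hypothesis that every $\sigma_i$ restricts to $\tau$ on the central subring $R'$. Recall that that proof, via the isomorphism of \cite[Th.~4.1]{BayFiMol13} between systems of sesquilinear forms over $(A,\{\sigma_i\}_{i\in I})$ and unimodular $1$-hermitian forms over a hermitian category $\catD$, followed by two applications of transfer in hermitian categories, reduces the problem to that of a unimodular $1$-hermitian form $(Q,g)$ over a ring with involution $(A'',\sigma'')$ in which $A''$ is an $R$-order with an $R$-involution; since $2\in\units{R}$, by Remark~\ref{RM:two-is-invertible} this is the same datum as a unimodular quadratic space over $(A'',\sigma'',1,\Lambda^{\max}(1))$. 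The one thing I need to add is that here $A''$ is in fact an $R'$-algebra and $\sigma''|_{R'}=\tau$.

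To see this, recall that in \cite[Th.~4.1]{BayFiMol13} the endomorphism rings of objects of $\catD$ are realized as $R$-subalgebras of products $\End_{A}(P_1)\times\End_{A}(P_2)^{\op}$ with $P_1,P_2\in\rproj{A}$. As $A$ is an $R'$-algebra, for each $c\in R'\subseteq\Cent(A)$ multiplication by~$c$ is a central endomorphism $\mu_c$ of $P_1$ and of $P_2$, and $\mu_c$ commutes with every $A$-linear map, so the resulting element lies in the subalgebra; hence $A'=\End_{\catD}(X)$ (for any object $X$), and in turn $A''$, is an $R'$-algebra. A routine computation with adjoints — the same one identifying the involution produced by transfer — shows that the adjoint involution attached to a unimodular hermitian form in $\catD$ sends $\mu_c$ to $\mu_{c^{\sigma_i}}$, and since every $\sigma_i$ agrees with $\tau$ on $R'$ this is $\mu_{\tau(c)}$; therefore $\sigma''|_{R'}=\tau$. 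Scalar extension preserves all of this, so $(A''_K,\sigma''_K)$ is an $R'_K$-algebra with $\sigma''_K$ inducing the standard involution on $R'_K:=R'\otimes_RK$.

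Now I claim $\sigma''_K$ is essentially of the second kind (in the sense of~\ref{subsection:general-crit}). Writing $R'=R[\theta\,|\,\theta^2=a\theta+b]$ with $a^2+4b\in\units{R}$ and $\tau(\theta)=a-\theta$, put $z=\mu_\theta\in A''_K$. Then $z$ is central in $A''_K$, so its class in $A''_K/\Jac(A''_K)$ is central, and $z-z^{\sigma''_K}=\mu_{\theta-\tau(\theta)}=\mu_{2\theta-a}$; since $(2\theta-a)^2=a^2+4b\in\units{R}$, the element $2\theta-a$ is a unit of~$R'$, whence $z-z^{\sigma''_K}\in\units{A''_K}$. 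Thus Proposition~\ref{PR:second-kind} applies to $(A'',\sigma'',1,\Lambda^{\max}(1))$ — its remaining hypotheses (conditions~(1)--(5) of Theorem~\ref{TH:main}, with condition~(0) automatic since $2\in\units{K}$) hold for an $R$-order over the semilocal PID~$R$ just as in the proof of Corollary~\ref{CR:main-for-fields} — and yields $|\gen_{S,F}(Q,[g])|=1$. Since the equivalences of \cite{BayFiMol13} are compatible with flat scalar extension over $R$ (\cite[Cr.~4.4]{BayFiMol13}), unwinding them gives $|\gen(P,\{f_i\})|=1$.

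The step I expect to be the main obstacle is the second paragraph: carefully verifying that the construction of \cite[Th.~4.1]{BayFiMol13} together with the two transfers really do carry the central $R'$-action along, and that the resulting adjoint involutions restrict to $\tau$ on $R'$. This amounts to tracking how the (twisted) duality of $\catD$ interacts with the central $R'$-action on its objects, and the point that makes it go through is precisely that all the $\sigma_i$ restrict to the \emph{same} involution $\tau$ on $R'$, so there is no ambiguity in the twist.
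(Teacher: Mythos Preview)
Your proposal is correct and follows essentially the same route as the paper: reduce via the proof of Theorem~\ref{TH:main-for-systems} to a unimodular $1$-hermitian form over $(A'',\sigma'')$, check that $A''$ is an $R'$-algebra with $\sigma''|_{R'}=\tau$, deduce that $\sigma''_K$ is essentially of the second kind, and conclude by Proposition~\ref{PR:second-kind}. The only differences are presentational: the paper dispatches your second paragraph by citing \cite[\S5F]{BayFiMol13} rather than arguing directly, and for the ``essentially of the second kind'' step the paper argues abstractly via the Galois structure of $R'_F/F$ whereas you exhibit the explicit witness $2\theta-a\in\units{R'}$ (your computation $(2\theta-a)^2=a^2+4b\in\units{R}$ is correct).
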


    \begin{proof}
        Let $(A'',\sigma'')$ be the $R$-order with involution constructed
        in the last paragraph of the proof of Theorem~\ref{TH:main-for-systems}.
        We claim that $\sigma''_K$ is essentially of the second kind (see~\ref{subsection:general-crit}),
        in which case we are done by Proposition~\ref{PR:second-kind}.
        By the construction of the hermitian category $\catD$,
        $A''$ is an $R'$-algebra and $\sigma''|_{R'}=\tau$ (see \cite[\S5F]{BayFiMol13}).
        Since $R'/R$ is Galois with Galois group $\{\id,\tau\}$,
        $R'_F/F$ is Galois with Galois group $\{\id,\tau_F\}$. In this case,
        either $R'_F\cong F\times F$ or $R'_F/F$ is a $2$-dimensional field
        extension. In either case,  it is easy to see that there is $a\in R'_F\subseteq\Cent(A_K)$
        such that $a-a^{\tau_F}\in\units{(R'_F)}$, and hence $\sigma''_K$ is essentially of the second kind.
    \end{proof}

    \begin{remark}
        Theorem~\ref{TH:main-for-systems}
        also holds for systems of sesquilinear forms over \emph{$R$-categories}
        (see \cite[\S2D, \S4]{BayFiMol13})
        in which the $\Hom$-sets are finitely generated projective $R$-modules.
        The proof is similar.
    \end{remark}

\subsection{Non-Unimodular Hermitian Forms}
\label{subsection:non-unimodular}

    Let $(A,\sigma)$ be an $R$-order with an $R$-involution and let $u\in \Cent(A)$
    be an element such that $u^\sigma u=1$.
    We extend the notion of genus to (not-necessarily unimodular) $u$-hermitian forms in the obvious way.
    As  in~\ref{subsection:systems}, for every $u$-hermitian form $(P,f)$ over $(A,\sigma)$, we set
    \[
    \gen(P,f)=\gen_{\{\hat{R}_\frakp\where \frakp\in\Spec R\}}(P,f)
    \]
    for brevity.
    (Note that if $(P,f)$ is unimodular and $(P',f')\in\gen(P,f)$,
    then $f'$ is also unimodular by virtue of Corollary~\ref{CR:unimodularity-in-genus}.)

    Viewing $(P,f)$ as a system of sesquilinear forms (consisting of just one form),
    Theorem~\ref{TH:main-for-systems} implies that $|\gen(P,f)|$ is a finite power of $2$.
    We now strengthen this assertion by showing that $|\gen(P,f)|=1$ when $A$ is hereditary
    (see~\ref{subsection:orders}).

    \begin{thm}\label{TH:non-unimodular-forms}
        Keep the previous setting and assume $A$ is hereditary
        and $2\in\units{R}$. Then $|\gen(P,f)|=1$ for any
        $u$-hermitian form $(P,f)$ over $(A,\sigma)$.
    \end{thm}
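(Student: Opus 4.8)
The plan is to reduce the statement to the unimodular case, which is settled by Theorem~\ref{TH:hereditary-orders}, by following the pattern of the proof of Theorem~\ref{TH:main-for-systems}; the only genuinely new point will be that hereditarity of $A$ passes to the order with involution produced by the transfer.

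First I would note that for a $u$-hermitian form the genus does not change if one only remembers the underlying sesquilinear form. Indeed, an isometry of $u$-hermitian spaces is, by definition, nothing but an isometry of the underlying sesquilinear spaces, and conversely, if a sesquilinear form $(P',f')$ becomes isometric to $(P,f)$ over $F=\hat R_{0}$, then $f'-(f')^{*}\omega_{P'}$ vanishes after tensoring with $F$; as $A$ is an $R$-order, $\Hom_A(P',(P')^{*})$ is free, hence torsion-free, over $R$, so $f'=(f')^{*}\omega_{P'}$ and $(P',f')$ is already $u$-hermitian. Thus $\gen(P,f)$, in the sense of $u$-hermitian forms, coincides with the genus of $(P,f)$ regarded as a system consisting of a single sesquilinear form over $(A,\sigma)$; in particular degenerate $f$ require no separate treatment.

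Next I would run the argument in the proof of Theorem~\ref{TH:main-for-systems} with the index set a singleton. By \cite[Th.~4.1, Cr.~4.4]{BayFiMol13} (compare also \cite{BayerFain96}), the category of such sesquilinear forms is equivalent, compatibly with flat scalar extension over $R$, to the category of unimodular $1$-hermitian forms over a hermitian category $\catD$; transferring with respect to the image $(M,g)$ of $(P,f)$ and passing to $\catD|_{M}$, one reduces to unimodular $1$-hermitian forms over an order with $R$-involution $(A',\sigma')$, where $A'=\End_{\catD}(M)$. As in \cite[\S4]{BayFiMol13}, $A'$ is an $R$-order, and in the single-form case one checks, from the explicit description of $\catD$, that $A'$ is, up to the involution, a finite product of rings of the form $\End_A(Q)$ and $\End_A(Q)^{\op}$ with $Q\in\rproj{A}$. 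Since $2\in\units{R}$, Remark~\ref{RM:two-is-invertible} identifies unimodular $1$-hermitian forms over $(A',\sigma')$ with unimodular quadratic spaces over $(A',\sigma',1,\Lambda^{\max}(1))$, and the whole chain of equivalences carries $\gen(P,f)$ bijectively onto the genus of one such quadratic space.

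The main obstacle is the remaining assertion, that $A'$ is a \emph{hereditary} $R$-order. Here I would invoke hereditarity of $A$: the endomorphism ring of a finitely generated projective lattice over a hereditary $R$-order is again a hereditary $R$-order (\cite[Ch.~9]{MaximalOrders}), the opposite of a hereditary $R$-order is hereditary, and a finite product of hereditary $R$-orders is hereditary; combined with the description of $A'$ above, this gives that $A'$ is a hereditary $R$-order. Theorem~\ref{TH:hereditary-orders} then applies to the quadratic space over $(A',\sigma',1,\Lambda^{\max}(1))$ obtained above and shows that its genus consists of a single class. Transporting this back through the equivalences, every $(P',f')\in\gen(P,f)$ is isometric to $(P,f)$, i.e.\ $|\gen(P,f)|=1$.
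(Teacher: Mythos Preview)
Your overall plan---pass to a hermitian category, transfer to a unimodular $1$-hermitian form over an $R$-order $(A',\sigma')$, then invoke a genus-one result---is exactly the paper's strategy, and your first paragraph (the hermitian genus equals the sesquilinear genus) is correct.  The gap is the structural claim in your third paragraph: it is \emph{not} true that $A'=\End_{\catD}(M)$ is ``a finite product of rings of the form $\End_A(Q)$ and $\End_A(Q)^{\op}$'', and in fact $A'$ need not be hereditary at all.  Take $R$ a DVR with uniformizer $\pi$, $A=R$, $\sigma=\id$, $P=R^2$, and $f$ the hermitian form with Gram matrix $\mathrm{diag}(1,\pi^2)$.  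The object in $\Mor(\rproj{R})$ attached to $(P,f)$ is the map $\mathrm{diag}(1,\pi^2)\colon R^2\to R^2$, and its endomorphism ring (pairs $(\phi,\psi)$ with $\psi f=f\phi$) projects isomorphically onto $\smallSMatII{R}{\pi^2 R}{R}{R}$, which is conjugate to $\smallSMatII{R}{\pi R}{\pi R}{R}$ and hence not hereditary (this is the order of Example~\ref{EX:genus-one-order} with $\fraka=\pi R$).  The same obstruction persists for the single-form category of \cite[\S4]{BayFiMol13}, since for hermitian $f$ the relevant full subcategory is $\Mor(\rproj{A})$.  So your final appeal to Theorem~\ref{TH:hereditary-orders} is not justified.

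The paper closes this gap by a different route: rather than proving $A''=\End_{\Mor(\rproj{A})}(U,g,V)$ is hereditary (it is not), it uses hereditarity of $A_S$ in Lemma~\ref{LM:morphism-category} to decompose $(U_S,g_S,V_S)$ into summands whose endomorphism rings become \emph{local} after tensoring with $K$.  The projection idempotents onto these summands lie in $A''_S$ and witness condition~(2) of Theorem~\ref{TH:enough-idems}; that weaker idempotent criterion---not hereditarity of $A''$---is what actually yields $|\gen|=1$.  In short, hereditarity of $A$ is used to control idempotents of $A''_S$ relative to $A''_K$, not to make $A''$ itself hereditary.
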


    We first prove the following lemma.
    We shall use the fact that a submodule of a projective
    module over a hereditary ring is also projective
    \cite[Cr.~2.26]{La99}.

    \begin{lem}\label{LM:morphism-category}
        Let $W$ be a hereditary flat $R$-algebra
        such that $W_F:=W\otimes_R F$ is artinian.
        Let $\Mor(\rproj{W})$ denote the category
        of morphisms in $\rproj{W}$. Then
        every object in $\Mor(\rproj{W})$ is a direct
        sum of objects $M$ such that $\End_{\Mor(\rproj{W})}(M)\otimes_RF$
        is local.
    \end{lem}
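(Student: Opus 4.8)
The plan is to reduce the statement, via flat base change to $F$, to a decomposition problem over $W$ itself, and then to solve the latter using heredity together with the structure of finitely generated projective modules over hereditary orders. Write an object of $\Mor(\rproj{W})$ as $M=(P\xrightarrow{\phi}Q)$ with $P,Q\in\rproj{W}$, so that $\End_{\Mor(\rproj{W})}(M)$ is the subring of $\End_W(P)\times\End_W(Q)$ cut out by the equation $b\phi=\phi a$. Since $W$ is flat over $R$ and $P,Q$ are finitely generated projective, $\Hom_W$ commutes with $-\otimes_R F$, and this kernel is preserved by the flat extension $R\to F$; hence $\End_{\Mor(\rproj{W})}(M)\otimes_R F\cong\End_{\Mor(\rproj{W_F})}(M_F)$, where $M_F=(P_F\xrightarrow{\phi_F}Q_F)$. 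As $W_F$ is artinian, the morphism category of $W_F$-modules is equivalent to $\rmod{\smallSMatII{W_F}{W_F}{0}{W_F}}$, the module category of an artinian ring, hence a Krull--Schmidt category; its full subcategory $\Mor(\rproj{W_F})$ is closed under direct summands (a summand of a morphism between projectives is again one), so it too is Krull--Schmidt, and in it an object is indecomposable if and only if its endomorphism ring is local. Therefore the lemma is equivalent to the claim that every object of $\Mor(\rproj{W})$ decomposes, \emph{over $W$}, into objects $N$ with $N_F$ indecomposable in $\Mor(\rproj{W_F})$.

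First I would split off the kernel. Since $W$ is hereditary, the submodules $\ker\phi\subseteq P$ and $\phi(P)\subseteq Q$ are projective, and the exact sequence $0\to\ker\phi\to P\to\phi(P)\to0$ splits because $\phi(P)$ is projective. Choosing a complement $P=\ker\phi\oplus P_1$ we get $M\cong(\ker\phi\xrightarrow{0}0)\oplus(P_1\xrightarrow{\phi}Q)$ with $\phi|_{P_1}$ injective. So it suffices to decompose, over $W$ and into $F$-indecomposable pieces, objects of two shapes: (a) $(N\to 0)$ (equivalently $(0\to N)$) for $N\in\rproj{W}$; and (b) $(P_1\hookrightarrow Q)$, an inclusion of finitely generated projective $W$-modules.

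For (a) one must write a finitely generated projective $W$-module as a direct sum of summands whose base change to $W_F$ is indecomposable; for (b) one wants an elementary-divisor decomposition of the inclusion $P_1\hookrightarrow Q$ into blocks $(L_k\xrightarrow{\iota_k}L'_k)$ — together with blocks $(0\to L')$ absorbing the part of $Q$ not hit — in which $(L_k)_F$ is a simple $W_F$-module (recall that $W_F$ is semisimple when $W$ is a hereditary order, by \cite{HijNish94}; the degenerate case $W=W_F$ is immediate, $\Mor(\rproj{W})$ being then already Krull--Schmidt). For such a block $\iota_{k,F}$ is an isomorphism onto its image, so the endomorphism ring of the block, after $\otimes_RF$, is $\End_{W_F}((L_k)_F)$ — a division ring, in particular local; and $(0\to L')_F$ is likewise $F$-indecomposable once $L'_F$ is taken simple. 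Both (a) and (b) rest on the structure theory of finitely generated projective lattices, and of inclusions between them, over a hereditary order; this reduces, along the central idempotents of $W_F$, to the case of a prime hereditary order, where it follows from Steinitz-type and elementary-divisor theorems (see \cite[Ch.~9]{MaximalOrders}). \emph{This structural decomposition — carried out over $W$ itself, not over a completion, using only heredity and the fact that submodules of projectives are projective — is the step I expect to be the main obstacle.} Granting it, one concatenates the decompositions obtained in (a) and (b) for the two summands produced by the kernel-splitting, and the reduction of the first paragraph then completes the proof.
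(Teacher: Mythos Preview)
Your overall plan---identify $\End_{\Mor(\rproj{W})}(M)\otimes_RF$ with $\End_{\Mor(\rproj{W_F})}(M_F)$ and then decompose $M$ over $W$ into pieces that are indecomposable after $\otimes_RF$---is exactly the paper's. The kernel-splitting is also harmless. But the execution of step~(b) has two genuine gaps.

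First, your endomorphism computation for a block $(L_k\hookrightarrow L'_k)$ is wrong. Knowing only that $(L_k)_F$ is simple, the endomorphism ring over $F$ is \emph{not} $\End_{W_F}((L_k)_F)$: it is the set of pairs $(a,b)\in\End_{W_F}((L_k)_F)\times\End_{W_F}((L'_k)_F)$ with $b\iota_{k,F}=\iota_{k,F}a$, and this can fail to be local. For instance, if $(L'_k)_F\cong (L_k)_F\oplus S$ with $S$ simple and $S\not\cong(L_k)_F$, the endomorphism ring is $\End_{W_F}((L_k)_F)\times\End_{W_F}(S)$. What you actually need is that $(L'_k)_F$ is an \emph{indecomposable} projective $W_F$-module; then $\End_{W_F}((L'_k)_F)$ is local, the block's endomorphism ring embeds in it as the stabiliser of $\im\iota_{k,F}$, and a short length argument (if $\phi$ is a unit in the ambient local ring then $\phi$ preserves and hence permutes the finite-length submodule $\im\iota_{k,F}$, so $\phi^{-1}$ lies in the subring) shows that stabiliser is itself local. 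This is precisely what the paper does.

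Second, your reduction ``along the central idempotents of $W_F$'' to a prime hereditary order is not available: central idempotents of $W_F$ need not lie in $W$, so you cannot split $W$ (or the inclusion $P_1\hookrightarrow Q$ over $W$) along them. The appeal to elementary-divisor theorems in \cite[Ch.~9]{MaximalOrders} is therefore not justified in the generality of the lemma (which assumes only that $W$ is hereditary flat over $R$ with $W_F$ artinian).

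The paper sidesteps both issues with a direct inductive argument using nothing but heredity. One first shows (essentially your step~(a)) that any $V\in\rproj{W}$ decomposes into summands whose $F$-extensions are indecomposable projective: pick a projection $p:V_F\to e_iW_F$, note $p(V)$ is projective by heredity, and split $V=V_1\oplus\ker(p|_V)$. Then for an indecomposable object $(U,g,V)$ with $V\neq0$, write $V=V_1\oplus V_2$ with $(V_1)_F$ indecomposable projective, split $U=U_1\oplus\ker(pg)$ using heredity again, and obtain a decomposition in $\Mor(\rproj{W})$; indecomposability forces $V=V_1$ and $g$ injective. Now $\End(U,g,V)_F$ sits inside the local ring $\End_{W_F}(V_F)$ as the stabiliser of $U_F$, and the length argument above finishes. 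This is the self-contained replacement for the ``elementary-divisor'' black box you invoke.
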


    \begin{proof}
        Recall that the objects of $\Mor(\rproj{W})$
        are triples $(U,g,V)$
        where $U,V\in \rproj{W}$ and $g\in\Hom_W(V,U)$. A morphism from
        $(U,g,V)$ to $(U',g',V')$ is a pair $(\phi,\psi)\in\Hom_W(U,U')\times\Hom_W(V,V')$
        such that $\psi g=g\phi$. It is easy to check that for every \emph{flat} $R$-algebra $T$, there is a
        canonical isomorphism
        \[\End_{\Mor(\rproj{W_T})}(U_T,g_T,V_T)\cong \End_{\Mor(\rproj{W})}(U,g,V)\otimes_R T\ .\]
        We may therefore identify $\End(U_F,g_F,V_F)$
        with $\End(U,g,V)\otimes_R F$. The flatness of $W$ over $R$ alows
        us to consider $U$ and $V$ as  submodules of $U_F$ and $V_F$, respectively.

        Let $e_1,\dots,e_t$ be a complete list of primitive idempotents in $W_F$ up to equivalence
        (see~\ref{subsection:general-crit}). Since $W_F$ is artinian, it is well-known
        that every projective $W_F$-module is a direct sum of copies of $\{e_iW_F\}_{i=1}^t$.
        We now claim that every $V\in\rproj{W}$ is a direct sum of modules $U$
        such that $U_F\cong e_iW_F$ for some $i$.
        This is clear if $V=0$. Otherwise, there is $1\leq i\leq t$
        and a projection of $W_F$-modules $p:V_F\to e_iW_F$. The $W$-module
        $p(V)$ is contained in $t\cdot e_iW$ for some $t\in\units{F}$,
        and hence it is projective (because $W$ is hereditary).
        Thus, we can write $V=V_1\oplus \ker (p|_V)$. It is clear that $(V_1)_F\cong e_iW_F$ via $p$.
        Now proceed by induction on $\mathrm{length}(V_F)$.

        Let $(U,g,V)\in\Mor(\rproj{W})$ be an indecomposable object.
        We claim that  $\End(U,g,V)_F$ is local. If $V=0$, then $g=0$
        and $U$ is necessarily indecomposable in $\rproj{W}$.
        By the previous paragraph, there is some $1\leq i\leq t$
        such that $U_F\cong e_iW_F$.
        Thus, $\End(U,g,V)_F\cong \End_{W}(U)_F\cong \End_{W_F}(e_iW_F)\cong
        e_iW_Fe_i$, which is local.
        We may therefore assume $V\neq 0$.
        Now, by the previous paragraph, we can write
        $V=V_1\oplus V_2$ where $(V_1)_F\cong e_iW_F$ for some $i$.
        Let $p$ denote the projection from $V$ onto the summand $V_1$.
        Then $p(g(U))$ is projective (because $W$ is hereditary),
        hence we can write $U=U_1\oplus U_2$ with $U_2=\ker (p g)$.
        It is easy to see that $(U,g,V)=(U_1,g|_{U_1},V_1)\oplus (U_2,g|_{U_2},V_2)$.
        Since $(U,g,V)$ was assumed to be indecomposable, $U=U_1$ and $V=V_1$.
        It also follows that $g$ is injective, so we may identify
        $U$ as a submodule of $V$ via $g$. The ring $\End(U,g,V)_F=\End(U_F,g_F,V_F)$
        therefore consists of those elements $\phi\in\End_{W_F}(V_F)\cong e_iW_Fe_i$
        such that $\phi(U_F)\subseteq U_F$.
        Since $e_iW_Fe_i$ is local, at least one of $\phi$, $1-\phi$ is invertible
        in $\End_{W_F}(V_F)$, say it is $\phi$.
        Since $\mathrm{length}(U_F)$ is finite, we must have $\phi(U_F)=U_F$
        and hence $\phi^{-1}(U_F)=U_F$, which implies $\phi^{-1}\in \End(U_F,g_F,V_F)$.
        This shows that for all $\phi\in\End(U,g,V)_F$, at least one of $\phi$, $1-\phi$
        is invertible, so  $\End(U,g,V)_F$ is local.

        Finally, we note that every object $(U,g,V)\in\Mor(\rproj{W})$
        can be written as a finite sum of indecomposable objects because
        every non-trivial decomposition of $(U,g,V)$ induces a non-trivial decomposition
        of $(U_F,g_F,V_F)\in\Mor(\rproj{W})$ and then length
        of $(U_F,g_F,V_F)$ is finite since $W_F$ is artinian.
    \end{proof}

    \begin{proof}[Proof of Theorem~\ref{TH:non-unimodular-forms}]
        We follow the same argument as in the proof of Theorem~\ref{TH:main-for-systems},
        but use the hermitian category constructed in
        \cite[\S3]{BayerFain96} instead of the one in \cite[\S4]{BayFiMol13}.\footnote{
            One can  also take the hermitian categories constructed in \cite[\S3]{BayFiMol13}
            and \cite[\S3]{BayerMold12}. The hermitian category of  \cite[\S3]{BayerFain96} embeds
            as a full subcategory inside these hermitian categories, which are
            isomorphic in our setting \cite[Rm.~3.2]{BayFiMol13}.
        } This category is
        $\Mor(\rproj{A})$ endowed with a certain hermitian structure.
        By \cite[Th.~1]{BayerFain96},
        there is an equivalence between the category of (arbitrary) hermitian forms over
        $(A,\sigma)$ and the category of unimodular $1$-hermitian forms over $\Mor(\rproj{A})$.
        One can check that this correspondence is compatible with flat scalar extension in the sense
        of \cite[\S2.4]{BayFiMol13}, either
        directly, or by arguing as in the proof of \cite[Pr.~3.7]{BayFiMol13}.

        Arguing as in the proof of Theorem~\ref{TH:main-for-systems}, we are reduced
        to show that the genus of unimodular $1$-hermitian forms over $(A'',\sigma'')$
        has size $1$, where $A''$ is the endomorphism ring of some object in $\Mor(\rproj{A})$.
        It is therefore enough to show that the endomorphism ring of every object in $\Mor(\rproj{A})$
        satisfies the assumptions of Theorem~\ref{TH:enough-idems}.

        Let $(U,g,V)\in\Mor(\rproj{A})$.
        As observed in the proof of Lemma~\ref{LM:morphism-category},
        we may identify $\End(U_T,g_T,V_T)$ with $\End(U,g,T)\otimes_RT$
        for every flat $R$-algebra $T$. Recall that $A_S$
        is hereditary by Theorem~\ref{TH:local-hereditarity}.
        Now, taking $W=A_S$ (so that $W_F=A_K$),
        Lemma~\ref{LM:morphism-category}  implies
        that $(U_S,g_S,V_S)$ can be written as a direct sum
        $\bigoplus_{i=1}^s(U_i,g_i,V_i)$ in $\Mor(\rproj{A_S})$ such that
        $\End(U_i,g_i,V_i)\otimes_RF=\End(U_i,g_i,V_i)\otimes_SK$
        is local for all $i$.
        Let $e_i\in\End(U_S,g_S,V_S)=\End(U,g,V)\otimes_RS$
        be the projection onto the summand $(U_i,g_i,V_i)$.
        Then the idempotents $e_1,\dots,e_s$
        satisfy condition (2) of Theorem~\ref{TH:hereditary-orders}
        for $A'':=\End(U,g,V)$. That $A''_S$ is semilocal and $A''_K$ is semiperfect
        is clear, so we are done.
    \end{proof}

\subsection{Cancellation and Springer's Theorem}

    As an application of the previous results, we now give versions
    of Witt's Cancellation Theorem and Springer's (weak) Theorem.
    \emph{We assume $2\in\units{R}$ throughout}.

    \begin{cor}[Cancellation]\label{CR:cancellation}
        Let $A$ be a hereditary $R$-order,
        let $\sigma:A\to A$ be an $R$-involution and let $u\in\Cent{A}$ be an element in $A$
        satisfying $u^\sigma u=1$.
        Let $(P,f)$, $(P',f')$, $(P'',f'')$
        be $u$-hermitian forms over $(A,\sigma)$. If
        $f\perp f'\cong f\perp f''$, then $f'\cong f''$.
    \end{cor}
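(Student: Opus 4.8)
The plan is to deduce the cancellation from the triviality of the genus (Theorem~\ref{TH:non-unimodular-forms}) together with a local cancellation statement at every completion of $R$. By Theorem~\ref{TH:non-unimodular-forms} we have $|\gen(P',f')|=1$, so it is enough to show that $(P',f')$ and $(P'',f'')$ have the same genus, i.e.\ that $f'_{\hat R_\frakp}\cong f''_{\hat R_\frakp}$ for every $\frakp\in\Spec R$ (where $\hat R_0=F$). Scalar-extending the hypothesis $f\perp f'\cong f\perp f''$ along $R\to\hat R_\frakp$, this reduces the theorem to Witt cancellation for $u$-hermitian forms over the ring with involution $(A_{\hat R_\frakp},\sigma_{\hat R_\frakp})$, for each $\frakp$.

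Fix $\frakp\in\Spec R$ and write $\hat R=\hat R_\frakp$, $\hat F=\hat F_\frakp$ (so $\hat R=\hat F=F$ when $\frakp=0$). Then $\hat R$ is a field or a complete discrete valuation ring, $2\in\units{\hat R}$, and $A_{\hat R}$ is a hereditary $\hat R$-order whose generic fibre $A_{\hat F}$ is semisimple, hence artinian, by Theorem~\ref{TH:local-hereditarity} (for $\frakp=0$ one has $A_{\hat R}=A_F$, which is semisimple, hence hereditary, by the same theorem). I would now rerun the argument of the proof of Theorem~\ref{TH:non-unimodular-forms} with $\hat R$ in place of $R$: the category of $u$-hermitian forms over $(A_{\hat R},\sigma_{\hat R})$ is equivalent, compatibly with scalar extension, to the category of unimodular $1$-hermitian forms over the hermitian category $\Mor(\rproj{A_{\hat R}})$, and Lemma~\ref{LM:morphism-category} (applied with $\hat R$, $A_{\hat R}$, $\hat F$ in the roles of $R$, $W$, $F$) shows that every object of $\Mor(\rproj{A_{\hat R}})$ is a finite direct sum of objects whose endomorphism rings become local after $\otimes_{\hat R}\hat F$. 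Each such endomorphism ring is module-finite over $\hat R$, hence semiperfect, and has no nontrivial idempotents (its $\hat F$-fibre being local); therefore it is local, and consequently $\Mor(\rproj{A_{\hat R}})$ satisfies the Krull--Schmidt theorem. Witt cancellation for unimodular $1$-hermitian forms then holds in this hermitian category: the underlying objects cancel by Krull--Schmidt, and via transfer to the endomorphism ring of the underlying object this becomes the classical Witt cancellation theorem for unimodular hermitian forms over semiperfect rings with $2$ invertible, which is a consequence of the Witt extension theorem in that setting (see \cite{Reiter75}, \cite{Fi14A}; see also \cite[Ch.~II]{Kn91} and \cite{QuSchSch79}). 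This settles cancellation over $(A_{\hat R_\frakp},\sigma_{\hat R_\frakp})$, and letting $\frakp$ vary over $\Spec R$ completes the proof.

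The main difficulty is the passage at the finite places. There the forms are not assumed unimodular, so the classical Witt cancellation for hermitian forms over semisimple algebras does not apply directly; the real work is to reproduce, over each complete discrete valuation ring $\hat R_\frakp$, the hermitian-category reduction used for Theorem~\ref{TH:non-unimodular-forms}, and in particular to promote Lemma~\ref{LM:morphism-category} to a genuine Krull--Schmidt statement over $\hat R_\frakp$, so that one lands in the unimodular, semiperfect setting where cancellation (equivalently, the Witt extension theorem) is available.
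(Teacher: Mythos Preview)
Your proof is correct and follows the same overall strategy as the paper: reduce to each completion $\hat R_\frakp$ via Theorem~\ref{TH:non-unimodular-forms}, and then invoke Witt cancellation for (possibly non-unimodular) $u$-hermitian forms over $A_{\hat R_\frakp}$.

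The only difference is at the local step. The paper simply cites \cite[Th.~3]{BayerFain96} (and \cite[\S5A]{BayFiMol13}) for cancellation of arbitrary hermitian forms over finite $\hat R_\frakp$-algebras, whereas you rederive this (in the hereditary case at hand) by rerunning the $\Mor(\rproj{A_{\hat R}})$ reduction and promoting Lemma~\ref{LM:morphism-category} to a Krull--Schmidt statement over the complete DVR. Your promotion is sound: the endomorphism rings in question are torsion-free and module-finite over $\hat R$, hence semiperfect with injective map to their $\hat F$-fibre, so locality of the fibre forces locality over $\hat R$. The paper's citation is quicker and actually covers non-hereditary finite $\hat R_\frakp$-algebras as well, but your unpacking has the virtue of making the argument self-contained within the machinery already developed in the paper.
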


    \begin{proof}
        By Theorem~\ref{TH:non-unimodular-forms}, it is enough
        to show that $f'_{\hat{R}_\frakp}\cong f''_{\hat{R}_\frakp}$
        for all $\frakp\in\Spec R$ (including $\frakp=0$).
        Since $f_{\hat{R}_\frakp}\perp f'_{\hat{R}_\frakp}\cong f_{\hat{R}_\frakp}\perp f''_{\hat{R}_\frakp}$,
        it is enough to show that cancellation holds for (arbitrary) $u$-hermitian forms over
        finite $\hat{R}_\frakp$-algebras, and this was shown in \cite[Th.~3]{BayerFain96}
        (see also \cite[\S5A]{BayFiMol13} for further results of this kind).
    \end{proof}

    \begin{remark}
        Over general semilocal rings, cancellation fails even in the unimodular case; see \cite{Keller88}.
    \end{remark}

    For the next result, recall that an \emph{\'{e}tale extension} of $R$
    is a faithfully flat finitely presented commutative $R$-algebra $R'$ such that
    for every $\frakp\in\Spec R$ (including $\frakp=0$),
    $R'_{k(\frakp)}:=R'\otimes_R k(\frakp)$ is a finite product of separable field extensions of
    $k(\frakp)$ (where $k(\frakp)=R_\frakp/\frakp_\frakp$). This is equivalent
    to $\Spec R'\to \Spec R$ being an \emph{\'{e}tale cover}.
    For example, all {Galois extensions} of $R$ are \'{e}tale, and
    the algebra $\prod_{0\neq \frakp\in\Spec R}R_\frakp$ is also \'{e}tale over $R$.
    The rank of $R'$ is a function $\rank (R'):\Spec R\to \Z$ taking $\frakp$
    to $\dim_{k(\frakp)}R'_{k(\frakp)}$. This function is constant if $R'$ is a
    finitely generated projective $R$-module.
    We say that $R'$ has \emph{odd rank} over $R$ if $\rank(R')$ attains only odd values.

    \begin{cor}[Springer's Theorem]\label{CR:springer}
        Let $A,\sigma, u$ be as in Corollary~\ref{CR:cancellation}, and
        let $R'$ be an \'{e}tale $R$-algebra of odd rank.
        Let $(P,f),(P',f')$ be $u$-hermitian forms over $(A,\sigma)$. If
        $f_{R'}\cong f'_{R'}$, then $f\cong f'$.
    \end{cor}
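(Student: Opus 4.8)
The plan is to reduce Springer's theorem to a local statement, in the same spirit as the reduction of Corollary~\ref{CR:cancellation} to local cancellation. By Theorem~\ref{TH:non-unimodular-forms} we have $|\gen(P,f)|=1$, so it suffices to prove that $f$ and $f'$ lie in the same genus, i.e.\ that $f_{\hat{R}_\frakp}\cong f'_{\hat{R}_\frakp}$ for every $\frakp\in\Spec R$ (including $\frakp=0$, where $\hat{R}_0=F$). Fix $\frakp$ and put $T=\hat{R}_\frakp$. Extending scalars along $R\to R'\otimes_RT$, the hypothesis $f_{R'}\cong f'_{R'}$ becomes $(f_T)_{R'_T}\cong(f'_T)_{R'_T}$, where $R'_T:=R'\otimes_RT$ is an étale $T$-algebra.

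Next I would use the oddness of the rank to replace $R'_T$ by a single odd extension. Since $T$ is a complete local ring (a complete discrete valuation ring when $\frakp\neq0$, the field $F$ when $\frakp=0$), a quasi-finite étale $T$-algebra splits as a finite product of local pieces: when $\frakp=0$, $R'_F$ is a finite product $\prod_i L_i$ of finite separable field extensions of $F$; when $\frakp\neq0$, those factors of $R'_T$ that are finite over $T$ are finite unramified extensions of complete discrete valuation rings, and they account for the rank of $R'_T$ at the closed point $\frakp$ (the remaining factors live only over the generic point and contribute $0$ there). In both cases this rank equals $\rank_\frakp(R')$, which is odd; hence some local factor $T'$ of $R'_T$ has odd rank over $T$, and projecting $(f_T)_{R'_T}\cong(f'_T)_{R'_T}$ onto it gives $(f_T)_{T'}\cong(f'_T)_{T'}$ with $T'/T$ of odd rank.

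It then remains to prove the local Springer statement: if $T'/T$ is a finite separable field extension (case $\frakp=0$) or a finite unramified extension of complete discrete valuation rings (case $\frakp\neq0$), of odd rank, and $g,g'$ are $u$-hermitian forms over $(A_T,\sigma)$ with $g_{T'}\cong g'_{T'}$, then $g\cong g'$. For $T=F$ the algebra $A_F$ is semisimple by Theorem~\ref{TH:local-hereditarity}, and this is the classical Springer theorem for hermitian forms over semisimple algebras with involution: it follows from Witt cancellation over semisimple algebras (which holds) together with the standard odd-degree transfer argument, via an $F$-linear functional $s\colon T'\to F$ with $s(1)\neq0$, whose associated transfer composed with restriction on the relevant Witt group is multiplication by a hermitian trace form of odd rank and hence injective. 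For $T=\hat{R}_\frakp$ a complete discrete valuation ring the statement is the local Springer theorem for (not necessarily unimodular) $u$-hermitian forms over the hereditary order $A_T$; it holds by the results already invoked for Corollary~\ref{CR:cancellation} (\cite{BayerFain96}, \cite[\S5]{BayFiMol13}), and can alternatively be deduced by the $\Mor(\rproj{A})$-reduction used in the proof of Theorem~\ref{TH:non-unimodular-forms} together with a lifting theorem over complete discrete valuation rings (\cite[Th.~II.4.6.1]{Kn91}), which reduces it to the residue field and the field case above. Combining the isomorphisms $f_{\hat{R}_\frakp}\cong f'_{\hat{R}_\frakp}$ over all $\frakp$ with $|\gen(P,f)|=1$ gives $f\cong f'$.

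I expect the main obstacle to be the local Springer statement over the complete discrete valuation rings $\hat{R}_\frakp$: running the reduction to the residue field for non-unimodular forms requires passing through the hermitian category $\Mor(\rproj{A})$ and controlling the resulting algebra with involution over the residue field, and one must check that the field-case Springer invoked at the end still applies there (the residual algebra need not be semisimple), so it is likely cleanest to quote the local statement directly from \cite{BayerFain96} or \cite{BayFiMol13}. The étale-algebra bookkeeping in the second paragraph --- that a quasi-finite flat finitely presented algebra over a complete local ring splits into local factors, with the closed-point rank captured precisely by the factors that are finite over $T$ --- is routine but should be written out with care.
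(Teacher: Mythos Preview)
Your approach is correct and close in spirit to the paper's, but the paper streamlines it in two ways that remove exactly the obstacles you flag. First, after reducing to $T=\hat{R}_\frakp$ and applying the $\Mor(\rproj{A})$-reduction to unimodular forms over a (not necessarily hereditary) $T$-order, the paper immediately reduces modulo $\Jac(T)$ via the lifting theorem \cite[Th.~II.4.6.1]{Kn91}; this replaces your \'etale-algebra bookkeeping over the complete DVR $T$ by the trivial observation that $k':=R'\otimes_R k$ is a product of separable field extensions of $k$ of total odd dimension, and simultaneously absorbs the case $\frakp=0$ (where $k=F$) without a separate argument. Second, for the final Springer step over the residue field $k$, the paper invokes \cite[Pr.~1.2, Th.~2.1]{BayLen90}, which holds for unimodular hermitian forms over \emph{arbitrary} finite-dimensional involutary $k$-algebras---so the non-semisimplicity of the residual algebra after the $\Mor$-reduction is a non-issue. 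The references you propose for the local step, \cite{BayerFain96} and \cite[\S5]{BayFiMol13}, concern cancellation rather than odd-degree descent; the correct citation is \cite{BayLen90}.
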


    \begin{proof}
        It is easy to check that if $T$ is any commutative $R$-algebra,
        then $R'_T$ is \'{e}tale of odd rank over $T$.
        Thus, as in the proof
        of Corollary~\ref{CR:cancellation}, it is enough to prove the theorem
        in case $R$ is a complete discrete valuation ring.

        Now, as in the proof of Theorem~\ref{TH:non-unimodular-forms},
        we may reduce to \emph{unimodular} $1$-hermitian forms over an involutary $R$-order (not-necessarily
        hereditary), so assume henceforth $f$ and $f'$ are unimodular.
        Write $k=R/\Jac(R)$ and $k'=R'\otimes_R k$.
        Since $R$ is complete in the $\Jac(R)$-adic topology,
        it is enough to prove $f_k\cong f'_k$ (\cite[Th.~2.2(2)]{QuSchSch79} or \cite[Th.~II.4.6.1]{Kn91}).

        Indeed, $f_{R'}\cong f_{R'}$ implies $f_{k'}\cong f'_{k'}$.
        Since $k'$ is \'{e}tale over $k$, $k'$ is a finite product of field extensions of $k$.
        Since $k'$ has odd rank over $k$, at least one of these fields, denote it $k_0$, has odd dimension over $k$.
        Now, $f_{k_0}\cong f'_{k_0}$, so by \cite[Prp.~1.2, Th.~2.1]{BayLen90} (Springer's Theorem
        for quadratic spaces over finite dimensional involutary algebras), $f_k\cong f'_k$ and we are done.
    \end{proof}

    \begin{remark}
        Using similar ideas, one can prove Corollaries~\ref{CR:cancellation}
        and~\ref{CR:springer}  for systems of sesquilinear forms
        in the setting of Theorem~\ref{TH:second-kind-systems}.
    \end{remark}

\appendix{}

\section{Isometry Groups as Groups Schemes}

    This appendix shows that under certain assumptions, isometry
    groups can be realized as faithfully flat smooth affine group schemes. The
    arguments presented are almost entirely due to Mathieu Huruguen
    and we thank him for his contribution.

\medskip

    Throughout, $R$ is a commutative ring and
    $(A,\sigma,u,\Lambda)$ is a unitary $R$-algebra (see~\ref{subsection:unitary-algs}).
    We further assume that $A$ is finitely generated and projective over
    $R$ and $\Lambda$ is a summand of $A$.
    Recall that
    $\Comm{R}$ denotes the category of commutative $R$-algebras.
    The acronym ``fppf'' stands for ``finitely presented faithfully flat''.

    \begin{prp}
        Let $(P,[f]),(P',[f'])\in\Quad[u,\Lambda]{A,\sigma}$.
        Denote by $\Iso([f],[f'])$ the set of isometries from $(P,[f])$ to $(P',[f'])$.
        Then the functor
        \[S\mapsto \Iso([f_S],[f'_S]):\Comm{R}\to \mathrm{Set}\]
        is the functor of points of a smooth finitely presented affine scheme over $\Spec R$,
        denoted $\uIso([f],[f'])$.
        Furthermore,
        if for every   $\frakp\in\Spec R$, there is a field extension $L/k(\frakp)$
        such that
        $P_L\cong P'_L$ (as $A_L$-modules),
        then $\uIso([f],[f'])$ is faithfully flat over $\Spec R$, and there exists an fppf
        $R$-algebra $S$ such that
        such that $[f_S]\cong [f'_S]$.
    \end{prp}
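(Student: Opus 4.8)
The plan is to realize $\uIso([f],[f'])$ as a representable functor, verify smoothness through the infinitesimal lifting criterion, and then read off the fppf algebra from faithful flatness.

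\emph{Representability.} Since $P$, $P'$, $\Hom_A(P,P')$ and $\Hom_A(P',P)$ are all finitely generated projective $R$-modules, Proposition~\ref{PR:realizing-projective-mods} realizes them as finitely presented affine $R$-schemes. The subfunctor of $\underline{\Hom_A(P,P')}\times_R\underline{\Hom_A(P',P)}$ carved out by the equations $\psi\phi=\id_P$ and $\phi\psi=\id_{P'}$ is closed, hence representable by a finitely presented affine $R$-scheme, and projects isomorphically onto the functor $S\mapsto\{A_S\text{-module isomorphisms }P_S\xrightarrow{\sim}P'_S\}$. Inside this scheme, an isomorphism $\phi$ is an isometry precisely when the two conditions of \eqref{EQ:isometry-equiv-cond} hold: the equation $\phi^*h_{f'}\phi=h_f$ is polynomial in $\phi$, and $\hat f'\circ\phi=\hat f$ amounts to the vanishing of a quadratic map $P\to A/\Lambda$, a closed condition cut out by finitely many equations because $A/\Lambda$ is finitely generated projective over $R$ (here we use that $\Lambda$ is a summand of $A$). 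The resulting closed subscheme is $\uIso([f],[f'])$; it is finitely presented and affine, has the stated functor of points, and equals the empty scheme exactly when $P$ and $P'$ are nowhere isomorphic.

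\emph{Smoothness.} As $\uIso([f],[f'])$ is locally of finite presentation, it is enough to check the infinitesimal lifting property: for every $R$-algebra $S$ and square-zero ideal $I\subseteq S$, every isometry $\bar\phi\colon(P_{S/I},[f_{S/I}])\to(P'_{S/I},[f'_{S/I}])$ lifts to an isometry over $S$. First lift $\bar\phi$ to an $A_S$-linear map $\phi\colon P_S\to P'_S$ — possible since $P_S$ is projective — which is automatically an isomorphism because $I$ is nilpotent. Put $g=\phi^*f'\phi-f$. Since $\bar\phi$ is an isometry, Proposition~\ref{PR:scalar-ext-for-quad-forms} lets one write $g=\ell+g_0$ with $\ell\in\Lambda_{P_S}$ and $g_0$ in the kernel of $\Hom_{A_S}(P_S,(P_S)^*)\to\Hom_{A_{S/I}}(P_{S/I},(P_{S/I})^*)$, i.e.\ $g_0\in I\cdot\Hom_{A_S}(P_S,(P_S)^*)$. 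One then looks for $\psi\in I\cdot\End_{A_S}(P_S)$ making $\phi(1+\psi)$ an isometry; expanding $(1+\psi)^*\phi^*f'\phi(1+\psi)$ and using $I^2=0$, this reduces to solving $\psi^*h_f+h_f\psi\equiv-(g+g^*\omega_{P_S})\pmod{\Lambda_{P_S}}$ in a way compatible with the attached quadratic maps. The key point is that $(P,[f])$ is \emph{unimodular}, so $h_f$ is invertible and the linear part can be solved; reconciling the solution with the quadratic constraint is the infinitesimal version of the classical lifting theorem for unimodular quadratic spaces along nilpotent extensions (cf.\ \cite[Th.~II.4.6.1]{Kn91}, \cite[Th.~2.2(2)]{QuSchSch79}), and I expect this correction step to be the main technical obstacle — it is precisely where unimodularity is used. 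Equivalently, $\uIso([f],[f'])$ is a (possibly empty) pseudo-torsor under the automorphism group scheme $\uO([f])$.

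\emph{Faithful flatness and the fppf algebra.} Smooth morphisms are flat, so it remains to show that $\uIso([f],[f'])\to\Spec R$ is surjective, equivalently that each fibre is nonempty. Fix $\frakp\in\Spec R$; by hypothesis there is a field extension $L/k(\frakp)$ with $P_L\cong P'_L$, and we may enlarge $L$ to be algebraically closed. Transferring to $B=\End_{A_L}(P_L)$ and reducing modulo the nilpotent ideal $\Jac(B)$ via the lifting theorem, $B/\Jac(B)$ becomes a product of split matrix algebras over $L$, and one finds that over $L$ a unimodular quadratic space is determined up to isometry by its underlying module; hence $[f_L]\cong[f'_L]$, so the fibre over $\frakp$ has an $L$-point. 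Therefore $\uIso([f],[f'])\to\Spec R$ is faithfully flat; being also affine and finitely presented, its coordinate ring $S:=\Gamma(\uIso([f],[f']),\mathcal O)$ is an fppf $R$-algebra. Finally, the identity morphism of $\Spec S=\uIso([f],[f'])$, read through the functor of points, is an element of $\Iso([f_S],[f'_S])$, i.e.\ an isometry $[f_S]\xrightarrow{\sim}[f'_S]$, which completes the argument.
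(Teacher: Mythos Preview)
Your proof follows the paper's strategy exactly: realize $\uIso([f],[f'])$ as a closed subscheme of an affine space, verify formal smoothness via the infinitesimal lifting criterion (using invertibility of $h_{f'}$), and deduce faithful flatness by exhibiting a point on each fiber over an algebraically closed extension of $k(\frakp)$, where any two unimodular quadratic spaces on isomorphic modules are isometric (\cite[3.4(3)]{QuSchSch79}). For the correction step you flagged, the paper's explicit solution is: having lifted $\bar\phi$ to $\psi$ and chosen $r\in\Lambda_{P_S}$ so that $g:=\psi^*f'_S\psi-f_S-r\in I\cdot\Hom_A(P,P^*)_S$, take $\xi=\psi+c$ with $c:=-(h_{f'_S})^{-1}(\psi^*)^{-1}g$; a direct check using $g=-\psi^*h_{f'_S}c$ gives $\xi^*f'_S\xi-f_S=r+(c^*f'_S\psi)-(c^*f'_S\psi)^*\omega\in\Lambda_{P_S}$.
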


    \begin{proof}
        Choose $B,Q,Q'\in\rproj{R}$ such that $A\oplus B$, $P\oplus Q$, $P'\oplus Q'$
        are free, and fix isomorphisms $A\oplus B\cong R^k$, $P\oplus Q\cong R^m$,
        $P'\oplus Q'\cong R^n$ (elements of $R^k$, $R^m$, $R^n$ are viewed as column vectors).
        For every $S\in \Comm{R}$, we embed $\Iso([f_S],[f'_S])$ in $\nMat{S}{n\times m}\times\nMat{S}{m\times n}\cong S^{2mn}$
        via
        \[\phi\mapsto (\phi\oplus 0,\phi^{-1}\oplus 0)\in\Hom_S(S^m,S^n)\times\Hom(S^n,S^m)\cong \nMat{S}{n\times m}\times\nMat{S}{m\times n}\ .\]
        We shall give finitely many polynomial equations on $R^{2mn}\cong\nMat{R}{m\times n}\times\nMat{R}{n\times m}$
        such that for every
        $S\in\Comm{R}$, their  zero locus
        in $S^{2mn}$ is   $\Iso([f_S],[f'_S])$. This will prove that the functor $S\mapsto \Iso([f_S],[f'_S])$
        is the points functor of the closed subscheme of $\mathbb{A}_R^{2mn}$ defined by these equations.

\smallskip

        Let $E\in\nMat{R}{m\times m}$ and $E'\in\nMat{R}{n\times n}$
        be the matrices corresponding to the projections $R^m=P\oplus Q\to P$ and
        $R^n=P'\oplus Q'\to P'$, respectively. Likewise, let
        $G\in\nMat{R}{k\times k}$ be a matrix corresponding
        to some projection of $R^k=A\oplus B$ onto the summand $\Lambda$.
        Let $a_1,\dots,a_r\in A$ be a set of generators of $A$ as an $R$-algebra.
        For each $1\leq i\leq r$, let $A_i\in\nMat{R}{m\times m}$ be the matrix corresponding to the map
        $x\oplus y\mapsto xa_i\oplus 0:P\oplus Q\to P\oplus Q$.
        Define $A'_1,\dots,A'_r\in\nMat{R}{n\times n}$ similarly, by replacing $P,Q$ with $P',Q'$.
        Next, extend $\tilde{h}_f:P\times P\to A$ to an $R$-bilinear function $H:R^m\times R^m\to R^k$
        by $H(x_1\oplus y_1,x_2\oplus y_2)=\tilde{h}_f(x_1,x_2)\oplus 0$ (where $x_1,x_2\in P$, $y_1,y_2\in Q$).
        There are matrices $H_1,\dots,H_k\in \nMat{R}{m\times m}$
        such that $H(x,y)=(x^{T}H_1y,\dots,x^TH_ky)$ for all $x,y\in R^m$.
        Similarly, define $H'_1,\dots,H'_k\in\nMat{R}{n\times n}$
        by replacing $h_f$ with $h_{f'}$. We repeat this procedure with
        $f$ and $f'$ in place of  $h_f$ and $h_{f'}$ to get matrices $F_1,\dots,F_k\in\nMat{R}{m\times m}$,
        $F'_1,\dots,F'_k\in\nMat{R}{n\times n}$. Finally, for a matrix $X$,
        denote by $d(X)$ the row vector consisting of the diagonal entries of $X$.
        It is now routine to check that a pair of matrices $(X,Y)\in \nMat{S}{m\times n}\times \nMat{S}{n\times m}$
        lies in $\Iso([f_S],[f'_S])$ if and only if the following matrix equations, which are defined
        over $R$, are satisfied:
        \begin{enumerate}
            \item $E'XE=X$
            (``there is
            $\phi\in\Hom_R(P, P')$ such that $X=\phi\oplus 0$''),
            \item $EYE'=Y$, $XY=E'$, $YX=E$  (``$\phi$ is invertible, $Y=\phi^{-1}\oplus 0$''),
            \item $XA_1=A'_1X, \dots, XA_r=A'_rX$ (``$\phi$ is $A$-linear''),
            \item $X^TH'_1X=H_1, \dots, X^TH'_kX=H_k$ (``$\tilde{h}_{f'}(\phi x,\phi y)=\tilde{h}_f(x,y)$''),
            \item $G\cdot D(X)=D(X)$, where  $D(X)$ is the matrix whose rows are\linebreak
            $d(X^TF'_1X-F_1),\dots,d(X^TF'_kX-F_k)$ (``$\tilde{f}'(\phi x,\phi x)-\tilde{f}(x,x)\in \Lambda$'').
        \end{enumerate}

\smallskip

        We now show that $\uIso([f],[f'])$ is smooth over $\Spec R$. Since $\uIso([f],[f'])$ is of finite
        presentation, it is enough to check that $\uIso([f],[f'])$ is \emph{formally smooth}
        (see~\cite[Th.~17.5.1]{Groth67EGAiv}).
        Let $S$ be a commutative $R$-algebra and let $I\lhd S$ be an ideal with $I^2=0$.
        We need to show that
        \[
        \xi\mapsto \quo{\xi}:\Iso([f_S],[f'_S])\to \Iso([f_{S/I}],[f'_{S/I}])
        \]
        is surjective. This is a well-known argument (e.g.\ see \cite[Th.~2.2]{QuSchSch79}); we
        recall it for the sake of completeness.
        Let $\phi\in \Iso([f_{S/I}],[f'_{S/I}])$.
        Since $P$ and $P'$ are projective over $A$, taking $\Hom$-sets
        and dualizing with respect to $A$ commute with scalar extension (up to natural
        isomorphism; cf.\ Lemma~\ref{LM:natural-isomorphism}).
        Thus, there exists
        $\psi\in\Hom_{A_S}(P_S,P'_S)\cong \Hom_A(P,P')_S$ whose image
        in $\Hom_{A_{S/I}}(P_{S/I},P'_{S/I})\cong \Hom_A(P,P')_{S/I}$ is $\phi$.
        Since $\phi$ is an isometry, we have
        $\phi^*f'_{S/I}\phi-f_{S/I}\in \Lambda_{P_{S/I}}$.
        By Proposition~\ref{PR:scalar-ext-for-quad-forms}, the map $\Lambda_{P_S}\otimes_S S/I\to\Lambda_{P_{S/I}}$ is surjective,
        and hence there is $r\in \Lambda_{P_S}$
        such that
        \[
        g:=\psi^*f'_S\psi-f_S-r\in\ker\big(\Hom_A(P,P')_S\to \Hom_A(P,P')_{S/I}\big)=I\cdot \Hom_A(P,P')_S\ .
        \]
        Let  $c:=-(h_{f'_S})^{-1}(\psi^*)^{-1}g$ and $\xi=\psi+c$
        (note that $h_{f'_S}=f'_S+f'^*_S\omega$ is invertible because $[f']$ is unimodular).
        Then $\quo{\xi}=\phi$ (because $c\in I\cdot\Hom_A(P,P^*)_S$), and it straightforward
        to check that
        \begin{align*}
        \xi^*f'_S\xi-f_S
        & =r+
        (c^*f'_S\psi)-(c^*f'_S\psi)^*\omega\in\Lambda_{P_S}
        \end{align*}
        (use the fact that $g=-\psi^*(f'_S+f'^*_S\omega)c$).
        Thus, $\xi$ is an isometry from $[f_S]$ to $[f'_S]$.
        That $\xi$ is invertible
        is shown by similar means  and is left to the reader.

\smallskip

        To finish, assume that for every $\frakp\in \Spec R$, there is a field
        extension $L/k(\frakp)$ with $P_L\cong P'_L$,
        and write $\uIso([f],[f'])=\Spec S$ for suitable $S\in\Comm{R}$.
        Since $\uIso([f],[f'])(S)=\Iso([f_S],[f'_S])$ is tautologically
        non-empty, it is enough to show that $\Spec S\to \Spec R$ is faithfully flat.
        Since this morphism is smooth, and in particular flat, it is left to check that
        $\Spec S\to \Spec R$ has non-empty fibers.
        Indeed, let $\frakp\in\Spec R$, and choose an \emph{algebraically
        closed} field $L\supseteq k(\frakp)$ such that $P_L\cong P'_L$. By \cite[3.4(3)]{QuSchSch79} (for instance),
        the latter implies $[f_L]\cong [f'_L]$, so $\uIso([f],[f'])(L)\neq \emptyset$.
        Thus, the fiber $\Spec S\times_{\Spec R} \Spec k(\frakp)$ has an $L$-point.
    \end{proof}

    For $(P,[f])\in\Quad[u,\Lambda]{A,\sigma}$, we write $\uO([f])=\uIso([f],[f])$.
    The proposition shows that  $\uO([f])$ is a
    smooth fppf affine scheme over $\Spec R$.

    \begin{remark}\label{RM:appnedix-remark}
        (i) The proof of the proposition also shows that isometry groups of \emph{non-unimodular}
        forms can be regarded as affine schemes over $\Spec R$, but they are not flat in general.

        (ii) The scheme $\uO([f])$
        is a closed subscheme of $\underline{\End_A(P)}$, the scheme corresponding to $\End_A(P)$ (cf.~Proposition~\ref{PR:realizing-projective-mods}):
        Keeping the notation of the proof, embed $\Iso([f_S],[f_S])$ in $\End_{S}(P\oplus Q)\cong \nMat{S}{m}\cong S^{m^2}$ via
        $\phi\mapsto \phi\oplus 0$.
        Then $\End_{A_S}(P_S)$ is the zero locus of  equations (1) and (3) above,
        while  $\Iso([f_S],[f_S])$ is the zero locus of equations (1) and (3)--(5), hence our claim.
        (Note that equation (4) implies that the isometry $\phi$ corresponding to $X$
        is invertible: Since $\phi^* h_f\phi=h_f$ and $[f]$ is unimodular,
        $\phi$ is invertible on the left,
        and hence so is $\phi\oplus\id_Q\in\End(P\oplus Q)\cong \nMat{S}{m}$.
        Now, easy determinant considerations imply $\phi\oplus\id_Q$ is invertible on the right, so $\phi$ is invertible.)
    \end{remark}

\bibliographystyle{plain}
\bibliography{MyBib_15_09}

\end{document}